\numberwithin{equation}{section}
\newcommand{\be}{\begin{eqnarray}}
\newcommand{\ee}{\end{eqnarray}}
\newcommand{\ce}{\begin{eqnarray*}}
\newcommand{\de}{\end{eqnarray*}}
\newtheorem{theorem}{Theorem}[section]
\newtheorem{lemma}[theorem]{Lemma}
\newtheorem{remark}[theorem]{Remark}
\newtheorem{example}[theorem]{Example}
\newtheorem{assumption}{Assumption}
\begin{document}

\title{ {\bf SDEs with singular drifts and multiplicative noise on general space-time domains}}
\date{}
\author[a,c]{Chengcheng Ling}
\author[c]{Michael R\"ockner}
\author[b,c]{Xiangchan Zhu\thanks{Research of C. Ling and M. R\"ockner is  supported  by the DFG through the IRTG 2235 Bielefeld-Seoul “Searching for the regular in the
irregular: Analysis of singular and random systems.”\\
\indent Research of X. Zhu is supported by NSFC (11771037).\\
\indent Email: cling@math.uni-bielefeld.de (C. Ling),  roeckner@math.uni-bielefeld.de (M. R\"ockner), zhuxiangchan@126.com (X. Zhu)}}
\affil[a]{School of science, Beijing Jiaotong University, Beijing 100044, China}
\affil[b]{Academy of Mathematics and Systems Science, Chinese Academy of Sciences, Beijing 100190, China}
\affil[c]{Department of Mathematics, University of Bielefeld, D-33615, Germany}
\maketitle
\section*{Abstract}In this paper, we prove the existence and uniqueness of  maximally defined strong solutions to SDEs driven by multiplicative noise on general space-time domains $Q\subset\mathbb{R}_+\times\mathbb{R}^d$, which have continuous paths on the one-point compactification $Q\cup\partial$ of $Q$ where $\partial \notin Q$ and $Q\cup\partial$ is equipped with the Alexandrov topology. If the SDE is of gradient type (see \eqref{eq2.4} below) we prove that under suitable Lyapunov type conditions the life time of the solution is infinite and its  distribution has  sub-Gaussian tails. This generalizes earlier work \cite{KR} by Krylov and one of the authors to the case where the noise is multiplicative.
\section*{Key words} Krylov's estimate, stochastic differential equation, well-posedness, non-explosion of the solution, maximally defined local solution to SDE, singular drift, multiplicative noise

\section{Introduction}

Consider the following stochastic differential equation (abbreviated as SDE):
\begin{equation}\label{eq1}
X_t=x+\int_0^tb(s+r,X_r)dr+\int_0^t\sigma(s+r,X_r)dW_r,  \quad t\geq 0,
\end{equation}
in an open set $Q\subset[0,\infty)\times\mathbb{R}^{d}$ with measurable coefficients $b=(b_i)_{1\leq i\leq d}: Q\rightarrow \mathbb{R}^d$ and $\sigma=(\sigma_{ij})_{1\leq i,j\leq d}: Q\rightarrow L(\mathbb{R}^d)$ $(:=d\times d$ real valued matrices$)$. Here $(s,x)\in Q$ is the initial  point, and $(W_t)_{t\geq0}$ is a $d-$dimensional $(\mathcal{F}_t)$-Wiener process defined on a complete filtered probability space $(\Omega,\mathcal{F},(\mathcal{F}_t)_{t\geq0},P)$.
 Define
$$\xi := \inf\left\{t\geq 0: (t+s,X_t)\notin Q\right\}.$$
 $\xi$ is called the explosion time (or life time) of the process $(t+s,X_t)_{t\geq 0}$ in the domain $Q$.

  There are many known results on studying existence and uniqueness of strong solutions to the SDE $\eqref{eq1}$.  In the seminal paper \cite{A. J.},  Veretennikov  proved that for $Q=\mathbb{R}_+\times\mathbb{R}^d$, if the coefficient $\sigma$ is Lipschitz continuous in the space variable $x$ uniformly with respect to the time variable $t$, $\sigma\sigma^*$ is uniformly elliptic, and $b$ is bounded and measurable, then the SDE $\eqref{eq1}$ admits a unique global strong solution (i.e. $\xi=\infty$ $a.s.$ where $\xi$ is defined as above).  In  \cite{KR} under the assumptions that $\sigma=\mathbb{I}_{d\times d}$ (i.e. additive noise,  $\mathbb{I}_{d\times d}$ denotes the unit matrix) and $|bI_{Q^n}|\in L^{q(n)}(\mathbb{R};L^{p(n)}(\mathbb{R}^d))$ for $p(n),q(n)\in(2,\infty)$ and $d/p(n)+2/q(n)<1$, where $Q^n$ are open bounded subsets of $Q$ with $\overline{Q^n}\subset Q^{n+1}$ and $Q=\cup_{n}Q^n$,  Krylov and R\"ockner proved the existence of a unique maximal local strong solution to $\eqref{eq1}$ when $Q$ is a subset of $\mathbb{R}^{d+1}$, in the sense that there exists a unique strong solution $(s+t,X_t)_{t\geq0}$ solving \eqref{eq1} on $[0,\xi)$ such that $[0,\infty)\ni t\rightarrow (s+t,X_t)\in Q' :=Q\cup \partial$ (=Alexandrov compactification of $Q$) is continuous and this process is defined to be in $\partial $ if $t\geq \xi$. To this end they applied the Girsanov transformation to get existence of a weak solution firstly and  then proved pathwise uniqueness of \eqref{eq1} by Zvonkin's transformation invented in \cite{Zvonkin}. Then, the well-known Yamada-Watanabe theorem \cite{YW} yields existence and uniqueness of a maximal local strong solution.  Fedrizzi and Flandoli \cite{FF} introduced a new method to prove existence and uniqueness of a global strong solution to the SDE \eqref{eq1} by using regularizing properties of solutions to the Kolmogrov equation corresponding to \eqref{eq1}, assuming that $\sigma=\mathbb{I}_{d\times d}$, $|b|\in L_{loc}^q(\mathbb{R}_+,L^p(\mathbb{R}^d))$ with $p,q\in(2,\infty)$ and $d/p+2/q<1$. This method was extended by  Von der L\"uhe to the multiplicative noise case in her work \cite{K}.  Zhang in \cite{Zhang2011} proved existence and uniqueness of a strong solution to the SDE \eqref{eq1} on $Q=\mathbb{R}_+\times\mathbb{R}^d$ for $t<\tau$ $a.s.$, where $\tau$ is some stopping time, under the assumptions that $\sigma$ is  bounded and uniformly continuous in $x$ locally uniformly with respect to $t$, $\sigma\sigma^*$ is uniformly elliptic,  and $|b|,|\nabla\sigma|\in L_{loc}^{q(n)}(\mathbb{R}_+;L^{p(n)}(B_n))$ (where $\nabla \sigma$ denotes the weak gradient of $\sigma$ with respect to $x$) with $p(n),q(n)$ satisfying $p(n),q(n)\in(2,\infty)$ and $d/p(n)+2/q(n)<1$, where $B_n$ is the ball in $\mathbb{R}^d$ with radius $n\in\mathbb{N}$ centered at zero. Zvonkin's transformation plays a crucial role in Zhang's proof (see also \cite{A. Yu}, \cite{Zhang 2005},  \cite{Zhang Xie} for further interesting results on this topic, which however do not cover our results in this paper).  The  above results include the case where the coefficients of the SDE \eqref{eq1} are time dependent.   For the time independent case, Wang \cite{Wang} and Trutnau \cite{Trutnau} used generalized Dirichlet forms to get existence and uniqueness and also non-explosion results for the SDE \eqref{eq1} on $Q=\mathbb{R}^d$.\\
    \indent As mentioned in \cite{KR}, there are several interesting situations arising from applications, say diffusions in random media and particle systems, where the domain $Q$ of \eqref{eq1} is not the full space $\mathbb{R}\times\mathbb{R}^d$ but a subdomain (e.g. $ Q=\mathbb{R}\times(\mathbb{R}^{d}\backslash\gamma^\rho)$, where $\gamma^\rho=\{x\in\mathbb{R}^d|dist(x,\gamma)\leq \rho\}$, $\rho>0$, and $\gamma$ is a locally finite subset of $\mathbb{R}^d$), where none of the above results mentioned can be applied to get global solutions, except for the one in \cite{KR}. Moreover, Krylov and R\"ockner in \cite{KR} not only proved the existence and uniqueness of a maximal local strong solution of the equation on $Q$, but also they obtained that if $b=-\nabla\phi$, i.e., $b$ is minus the gradient in space of a nonnegative function $\phi$ and if there exist a constant $K\in[0,\infty)$ and an integrable function $h$ on $Q$ defined as above such that the following Lyapunov conditions hold in the distributional sense
\begin{equation}\label{eq2}
2D_t\phi\leq K\phi,\quad 2D_t\phi +\Delta \phi\leq he^{\epsilon \phi}, \quad \epsilon\in [0,2),
\end{equation}
the strong solution does not blow up, which means $\xi=\infty$ $a.s.$. Here $D_t\phi$ denotes the derivative of $\phi$ with respect to $t$. This result can be applied to diffusions in random environment and also finite interacting particle systems to show that if the above Lyapunov conditions hold, the process does not exit from $Q$ or go to infinity in finite time. However, \cite{KR} is restricted to the case where the equation \eqref{eq1} is driven by additive noise, that is, the diffusion term is a Brownian motion. \\
 \indent Our aim in this paper is to extend these results on existence and uniqueness of maximally defined local solutions and also the non-explosion results  in \cite{KR} to the multiplicative noise case on general space-time  domains $Q$. In order to prove the maximal local well-posedness result, we use a localization technique and the well-posedeness result in \cite{Zhang2011}. We want to point out that  as Krylov and R\"ockner did in \cite{KR}, we also prove the continuity of the paths of the solution not only in the domain $Q$ but also on $Q'=Q\cup\partial$, which  essentially follows from Lemmas \ref{ossilaton1} and \ref{ossilation2} below. As far as the non-explosion result is concerned, we have to take into account that having  non-constant $\sigma$ instead of $\mathbb{I}_{d\times d}$ in front of the Brownian motion in \eqref{eq1} means that we have to consider a different geometry on $\mathbb{R}^d$, and that this effects the Lyapunov function type conditions which are to replace \eqref{eq2} and also the form of the equation. In Remark \ref{gk} by comparing the underlying Kolmogrov operators, we explain why the SDE \eqref{eq2.4} should be considered and why \eqref{eq5} states the right Lyapunov type conditions which are analog to the ones in \eqref{eq2}. This leads to some substantial changes in the proof of our non-explosion result in comparison with the one in \cite{KR}.    In addition, we give some examples to show our  well-posedness and non-explosion results in Sections 6.1 and 6.2. We also give two  applications to diffusions in random media and particle systems. Both are  generalizations of the examples in \cite [Section 9]{KR} to the case of  multiplicative noise.

The organization of this paper is as follows: We state our notions and  main results in Section 2 . In Section 3 we prove that there exists a pathwise unique maximal strong solution $(s+t,X_t)_{t\geq0}$ solving the SDE \eqref{eq1} on $[0,\xi)$, and that the paths of  $(s+t,X_t)_{t\geq0}$ are continuous in $Q'=Q\cup\partial$.   Section 4 is devoted to the preparation of the proof of our non-explosion result, which is subsequently proved in Section 5. We discuss  examples and applications of our results in Section 6 . The Appendix contains technical lemmas used in the proofs of our main results.

\subsection*{Acknowledgement}
The authors are grateful to Prof. Fengyu Wang and Dr. Guohuan Zhao for helpful discussions.

\section{Main results}

\indent Let $Q$ be an open subset of $\mathbb{R}_+\times\mathbb{R}^{d}$ and $Q^n$, $n\geq1$, be bounded open subsets of $Q$ such that $\overline{Q^n}\subset Q^{n+1}$ and $\cup_{n}Q^n=Q$. We add an object $\partial\notin Q$ to $Q$ and define the neighborhoods of $\partial$ as the complements in $Q$ of closed bounded subsets. Then $Q'=Q\cup\partial$ becomes a compact topological space, which is just the Alexandrov compactification of $Q$.  For $p$, $q\in[1,\infty)$ and $0\leq S<T<\infty$, let $\mathbb{L}_p^q(S,T)$ denote the space of all real Borel measurable functions on $[S,T]\times\mathbb{R}^d$ with the norm
$$\Vert f\Vert_{\mathbb{L}_p^q(S,T)}:=\bigg(\int_S^T\big(\int_{\mathbb{R}^d}|f(t,x)|^pdx\big)^{q/p}dt\bigg)^{1/q}<+\infty.$$
 For simplicity, we write
$$\mathbb{L}_p^q=\mathbb{L}_p^q(0,\infty),\quad\mathbb{L}_p^q(T)=\mathbb{L}_p^q(0,T),\quad\mathbb{L}_p^{q,loc}=L_q^{loc}(\mathbb{R}_+,L_p(\mathbb{R}^d)).$$
Let $\mathcal{C}([0,\infty),\mathbb{R}^d)$ denote the space of all continuous $\mathbb{R}^d$-valued functions defined on $[0,\infty)$, by $\mathcal{C}([0,\infty),Q')$ we denote all continuous $Q'-$valued paths, $\mathcal{C}_b^n(\mathbb{R}^d)$ denotes the set of all bounded $n$ times continuously differentiable functions on $\mathbb{R}^d$ with bounded derivatives of all orders. Set $(a_{ij})_{1\leq i,j\leq d}:=\sigma\sigma^*$, where $\sigma^*$ denotes the transpose of $\sigma$. For $f\in L_{loc}^1(\mathbb{R}^d)$ we define $\partial_jf(x):=\frac{\partial f}{\partial x_j}(x)$ and  $\nabla f:=(\partial_i f)_{1\leq i\leq d}$ denotes the gradient of $f$. Here the derivatives are meant in the sense of distributions. For a real valued function $g\in\mathcal{C}^1([0,\infty))$, $D_tg$ denotes the derivative of $g$ with respect to $t$.  $L(\mathbb{R}^d)$ denotes all $d\times d$ real valued matrices.
\\

\indent We first state the result about maximally local well posedness of the SDE \eqref{eq1} on a domain $Q\subset\mathbb{R}_+\times\mathbb{R}^{d}$.
\begin{theorem}\label{mainadd1}
Let $(W_t)_{t\geq0}$ be an $d-$dimensional Wiener process defined on a complete probability space $(\Omega,\mathcal{F}, P)$, let $(\mathcal{F}_t)_{t\geq0}=(\mathcal{F}_t^W)_{t\geq0}$ be the normal filtration generated by $(W_t)_{t\geq0}$.  Assume that for any $n\in\mathbb{N}$ and some $p_n,$ $q_n\in(2,\infty)$, satisfying $d/p_n+2/q_n<1,$\\
(i) $|bI_{Q^n}|$, $|I_{Q^n}\nabla\sigma |\in \mathbb{L}_{p_n}^{q_n}$,\\
(ii) For all $1\leq i,j\leq d$, $Q\ni(t,x)\rightarrow \sigma_{ij}(t,x)\in\mathbb{R}$ is continuous in $x$ uniformly with respect to $t$ on $Q^n$, and there exists a positive constant $\delta_n$ such that for all $(t,x)\in Q^n$,
$$|\sigma^*(t,x)\lambda|^2\geq \delta_n|\lambda|^2, \quad \forall \lambda\in\mathbb{R}^d.$$
Then for any $(s,x)\in Q$, there exists  an $(\mathcal{F}_t)$-stopping time $\xi:=\inf\left\{t\geq0:z_t\notin Q\right\}$ and an $(\mathcal{F}_t)$-adapted, pathwise unique and $Q'$-valued  process $(z_t)_{t\geq0}:=(s+t,X_t)_{t\geq0}$ which is continuous in $Q'$  such that
\begin{equation}\label{eq1add}
{X_t=x+\int_0^tb(s+r,X_r)dr+\int_0^t\sigma(s+r,X_r)dW_r, \quad \forall t\in [0,\xi), a.s. }
\end{equation}
and for any $t\geq0$, $z_t=\partial $ on the set $\left\{\omega:t\geq\xi(\omega)\right\}(a.s.)$.
\end{theorem}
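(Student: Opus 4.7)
The plan is a localization-and-patching argument reducing the theorem to Zhang's global well-posedness result \cite{Zhang2011} applied on each bounded piece $Q^n$. For each $n$, fix a smooth cutoff $\chi_n:\mathbb{R}_+\times\mathbb{R}^d\to[0,1]$ with $\chi_n\equiv 1$ on $Q^n$ and $\operatorname{supp}\chi_n\subset Q^{n+1}$, and set
$$b^n:=\chi_n b,\qquad \sigma^n:=\chi_n\sigma+(1-\chi_n)\mathbb{I}_{d\times d},$$
extending $(b,\sigma)$ by $(0,\mathbb{I}_{d\times d})$ outside $Q^{n+1}$. A routine verification using (i), (ii) and the compactness of $\overline{Q^n}\subset Q$ shows that $(b^n,\sigma^n)$ meets the hypotheses of \cite{Zhang2011}: $\sigma^n$ is bounded and uniformly continuous in $x$ locally uniformly in $t$ on $\mathbb{R}_+\times\mathbb{R}^d$, $\sigma^n(\sigma^n)^*$ is uniformly elliptic there, and $|b^n|,|\nabla\sigma^n|\in\mathbb{L}^{q_n,loc}_{p_n}(B_m)$ for every ball $B_m$. (Boundedness of $\sigma$ on $\overline{Q^n}$ is extracted from (ii) via compactness.)

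By \cite{Zhang2011}, each such system admits a pathwise unique $(\mathcal{F}_t)$-adapted strong solution $(X^n_t)_{t\geq 0}$ driven by $W$ with initial condition $x$. Put $\xi_n:=\inf\{t\geq 0:(s+t,X^n_t)\notin Q^n\}$. Since $(b^{n+1},\sigma^{n+1})=(b^n,\sigma^n)$ on $Q^n$, both $X^n$ and $X^{n+1}$ satisfy the SDE with coefficients $(b^{n+1},\sigma^{n+1})$ on $[0,\xi_n]$, so pathwise uniqueness from Zhang's theorem yields $X^n_t=X^{n+1}_t$ for $t\leq\xi_n$ a.s.; in particular $\xi_n\leq\xi_{n+1}$. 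Setting $\xi:=\sup_n\xi_n$ and $X_t:=X^n_t$ on $\{t\leq\xi_n\}$ produces an $(\mathcal{F}_t)$-adapted solution of \eqref{eq1add} on $[0,\xi)$. Pathwise uniqueness of the maximal solution follows from the same consistency argument: any competing strong solution $(\tilde X,\tilde\xi)$ solves the SDE with $(b^n,\sigma^n)$ up to $\tilde\xi\wedge\xi_n$, hence agrees there with $X^n$, and letting $n\to\infty$ gives $\tilde X=X$, $\tilde\xi=\xi$ a.s.

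The final point is to extend $X$ to a continuous $Q'$-valued process. Define $z_t:=(s+t,X_t)$ for $t<\xi$ and $z_t:=\partial$ for $t\geq\xi$; continuity of $z$ inside $Q$ on $[0,\xi)$ is inherited from that of each $X^n$, and the identity $\xi=\inf\{t:z_t\notin Q\}$ is built into the construction. The hard part is left-continuity of $z$ at $t=\xi$ on $\{\xi<\infty\}$: in the Alexandrov topology this requires $(s+t,X_t)$ to leave every compact subset of $Q$ as $t\uparrow\xi$. Since any compact $K\subset Q$ lies in some $Q^n$, the problem reduces to excluding paths which exit $Q^n$ at $\xi_n<\xi$ and subsequently return to $K$ arbitrarily close to $\xi$. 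This ``non-oscillation at the boundary'' is exactly what Lemmas \ref{ossilaton1} and \ref{ossilation2} supply via Krylov-type estimates on the modulus of continuity near exit times; this is the main technical step of the proof, the preceding globalization, Zhang's theorem, and patching being essentially routine once the extensions are set up correctly.
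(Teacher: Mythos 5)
Your overall strategy coincides with the paper's: localize with cutoffs, solve globally via Zhang's theorem (Lemma \ref{lemm3.1}), glue the solutions by local pathwise uniqueness, and get continuity at $\xi$ from the bounce-back Lemmas \ref{ossilaton1} and \ref{ossilation2}. However, one step in your setup would fail as written: the extension $\sigma^n:=\chi_n\sigma+(1-\chi_n)\mathbb{I}_{d\times d}$ need not be uniformly elliptic, so Lemma \ref{lemm3.1} cannot be applied to $(b^n,\sigma^n)$. Hypothesis (ii) only bounds $\sigma\sigma^*$ from below; it does not make $\sigma$ itself (close to) positive definite, and a convex combination of an invertible matrix with the identity can be singular. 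Concretely, $\sigma\equiv-\mathbb{I}_{d\times d}$ satisfies (ii) with $\delta_n=1$, yet $\chi_n\sigma+(1-\chi_n)\mathbb{I}_{d\times d}=(1-2\chi_n)\mathbb{I}_{d\times d}$ vanishes on the nonempty set $\{\chi_n=1/2\}$ in the transition zone $Q^{n+1}\setminus Q^n$, so the ``routine verification'' of uniform ellipticity of $\sigma^n(\sigma^n)^*$ on $\mathbb{R}_+\times\mathbb{R}^d$ breaks down. The paper's construction differs at exactly this point: it takes $\sigma_s^n:=\chi_{n+1}\sigma+(1-\chi_n)\bigl(1+\sup_{Q^{n+2}}|\sigma|\bigr)\mathbb{I}_{d\times d}$, i.e.\ it leaves $\sigma$ untouched on all of $Q^{n+1}$ (not just $Q^n$) and, where the identity enters, scales it so as to dominate $|\sigma|$; its ellipticity check rests on this. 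Any repair of your version must likewise produce an extension that agrees with $\sigma$ on $Q^n$, is uniformly elliptic and $x$-uniformly continuous, and keeps $|\nabla\sigma^n|\in\mathbb{L}_p^{q,loc}$; the naive convex interpolation with $\mathbb{I}_{d\times d}$ does not deliver this.

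Apart from this, your patching and uniqueness arguments match the paper's (there the consistency is phrased via the stopping times $\tau_{n,k}$ and $\xi_k=\tau_{k,k}$). One caveat on the final step: you cite Lemmas \ref{ossilaton1} and \ref{ossilation2} as ready-made inputs, but in the paper they are proved inside this theorem's proof, and the estimate of $\bar J_k$ there uses the strong Markov property of $(z_t)_{t\geq0}$, which the paper must first establish through a substantial argument (Markov property of each $X^n$ via the Yamada--Watanabe functional, the Feller property from Lemma \ref{lemm3.1}, and a PDE argument for continuity of $(s,x)\mapsto Ef(X_t^n(s,(0,x)))$), combined with Krylov's estimate \eqref{expx}. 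Invoking these lemmas therefore presupposes that machinery, so the ``main technical step'' you defer is larger than your sketch indicates; with that understood, and the extension of $\sigma$ corrected, your outline reproduces the paper's proof.
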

\begin{remark}
In above theorem the condition $p,q\in(2,\infty)$ is automatically fulfilled when $d\geq 2$ since we also assume $d/p+2/q<1$. When $d=1$, we can refer to the result from Engelbert and Schmidt \cite{ES}  to obtain the existence and uniqueness of a strong solution to homogeneous SDE  on $\mathbb{R}^d$. They proved that if $\sigma(x)\neq0$ for all $x\in\mathbb{R}$ and $b/\sigma^2\in L_{loc}^1(\mathbb{R})$, and there exists a constant $C>0$ such that
$$|\sigma(x)-\sigma(y)|\leq C\sqrt{|x-y|},\quad x,y\in\mathbb{R},$$
$$|b(x)|+|\sigma(x)|\leq C(1+|x|),$$
then there exists a pathwise unique and $(\mathcal{F}_t)$-adapted process $(X_t)_{t\geq0}$ such that the SDE  $X_t=x+\int_0^tb(X_t)dt+\int_0^t\sigma(X_t)dW_t$ holds $a.s..$
\end{remark}
 Below we will give the non-explosion result for the solution to an SDE which is in a special form of \eqref{eq1add} on a domain $Q\subset\mathbb{R}_+\times\mathbb{R}^{d}$ under the following assumptions.
\begin{assumption}\label{ass2.1}
\noindent\textbf{(i)} $\phi$ is a nonnegative continuous function  defined on $Q$.\\
\textbf{(ii)} For each $n$ there exist $p=p(n)$, $q=q(n)$ satisfying
\begin{equation}\label{eq3}
p,q\in(2,\infty),\quad \frac{d}{p}+\frac{2}{q}<1,
\end{equation}
such that $|I_{Q^n}\nabla\phi |$, $|I_{Q^n}\nabla\sigma |\in \mathbb{L}_{p}^{q}$.\\
\textbf{(iii)} For each $1\leq i,j\leq d$, $Q\ni(t,x)\rightarrow \sigma_{ij}(t,x)\in\mathbb{R}$ is uniformly continuous in $x$ locally uniformly with respect to $t$, and there exists a positive constant $K$ such that for all $(t,x)\in Q$,
$$\frac{1}{K}|\lambda|^2\leq|\sigma^*(t,x)\lambda|^2\leq K|\lambda|^2,\quad \forall \lambda\in\mathbb{R}^d.$$
\textbf{(iv)} For some constants $K_1\in[0,\infty)$ and $\epsilon\in [0,2)$, in the sense of distributions on $Q$ we have
\begin{equation}\label{eq5}
2D_t\phi\leq K_1\phi,\quad 2D_t\phi+\sum_{i,j=1}^d\partial_j(a_{ij}\partial_{i}\phi)\leq he^{\epsilon\phi}.
\end{equation}
where $h$ is a continuous nonnegative function defined on $Q$ satisfying the following condition:\\
(H)\label{assH} For any $a>0$ and $T\in (0,\infty)$ there is an $r=r(T,a)\in(1,\infty)$ such that
$$H(T,a,r):=H_Q(T,a,r):=\int_ Qh^r(t,x)I_{(0,T)}(t)e^{-a|x|^2}dtdx<\infty.$$
\textbf{(v)}  For all $1\leq i,j\leq d$, for all $(t,x)$, $(s,y)\in Q$,
\begin{align}\label{eqK-lip}
|a_{ij}(t,x)-a_{ij}(s,y)|&\leq K(|x-y|\vee|t-s|^{1/2}),
\end{align}
and for all $n\in\mathbb{N}$,  and $(t,x)$, $(s,y)\in Q^n$, there exists $C_n\in[0,\infty)$ such that
$$|\partial_ja_{ij}(t,x)-\partial_ja_{ij}(s,y)|\leq C_n(|x-y|\vee|t-s|^{1/2}).$$
\textbf{(vi)}\label{addvi} The function $\phi$ blows up near the parabolic boundary of $Q$, that is for any $(s,x)\in Q$, $\tau\in(0,\infty)$, and any continuous bounded $\mathbb{R}^d-$valued function $x_t$ defined on $[0,\tau)$ and such that $(s+t,x_t)\in Q$ for all $t\in[0,\tau)$ and
\begin{align*}
\liminf_{t\uparrow\tau}dist((s+t,x_t),\partial Q)=0,
\end{align*}
we have
\begin{align*}
\limsup_{t\uparrow \tau}\phi(s+t,x_t)=\infty.
\end{align*}

\end{assumption}

\begin{remark}\label{rem2.2}
Observe that $H(T,a,r)<\infty$ if $h$ is just a constant. Furthermore, Assumption \ref{ass2.1} (iii) shows that $\sigma$ is bounded on $Q$, invertible for every $(t,x)\in Q$, and the inverse $\sigma^{-1}$ is also bounded on $ Q$.
\end{remark}
\begin{theorem}\label{th2.3}
Let Assumption \ref{ass2.1}  be satisfied. Let $(\Omega,\mathcal{F},(\mathcal{F}_t)_{t\geq0},P)$ and $(W_t)_{t\geq0}$  be as in Theorem \ref{mainadd1}.  Then for any $(s,x)\in Q$ there exists a continuous $\mathbb{R}^d$-valued and $(\mathcal{F}_t)$-adapted random process $(X_t)_{t\geq0}$ such that almost surely for all $t\geq0$, $(s+t,X_t)\in Q, \quad $
 \begin{equation}\label{eq2.4}
 X_t=x+\int_0^t(-\sigma\sigma^*\nabla\phi)(s+r,X_r)dr+\frac{1}{2}(\sum_{j=1}^d\int_0^t\partial_ja_{ij}(s+r,X_r)dr)_{1\leq i\leq d}+\int_0^t\sigma(s+r,X_r)dW_r.
 \end{equation}
 Furthermore, for each $T\in(0,\infty)$ and $m\geq 1$ there exists a constant $N$, depending only on $K$, $K_1$, $d$, $p(m+1)$, $q(m+1)$, $\epsilon$, $T$, $\Vert I_{Q^{m+1}}\nabla\phi \Vert_{\mathbb{L}_{p(m+1)}^{q(m+1)}}$, $dist(\partial Q^m,\partial Q^{m+1})$, $\sup_{Q^{m+1}}\left\{\phi+h \right\}$, and the function $H$, such that for $(s,x)\in Q^m$, $t\leq T$, we have
 $$E\sup_{t\leq T}\exp(\mu\phi(s+t,X_t)+\mu\nu|X_t|^2)\leq N,$$
  where
\begin{equation}\label{eq2.5}\mu=(\delta/2)e^{-TK_1/(2\delta)},\quad \delta=1/2-\epsilon/4,\quad \nu=\mu/(12KT).
  \end{equation}
\end{theorem}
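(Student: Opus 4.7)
The plan is to use Theorem~\ref{mainadd1} for the SDE \eqref{eq2.4} to get a maximal local solution, and then to prove non-explosion together with the exponential moment bound by It\^o's formula applied to a Lyapunov function built from $\phi$ and $|x|^2$. Writing the drift of \eqref{eq2.4} as $b=-\sigma\sigma^*\nabla\phi+\tfrac{1}{2}(\sum_j\partial_j a_{ij})_{1\le i\le d}$, Assumption~\ref{ass2.1}(ii)--(iii) guarantees $|bI_{Q^n}|,|I_{Q^n}\nabla\sigma|\in\mathbb{L}^{q(n)}_{p(n)}$ together with the continuity and ellipticity required by Theorem~\ref{mainadd1}. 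This gives a pathwise unique maximal solution $(s+t,X_t)_{t\ge0}$ continuous in $Q'$ with explosion time $\xi$. Setting $\tau_m:=\inf\{t\ge0:(s+t,X_t)\notin Q^m\}$, so $\tau_m\uparrow\xi$, it suffices to bound $E\sup_{t\le T\wedge\tau_m}V(t,X_t)$ uniformly in $m$ for the function $V$ below.

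Let $\delta:=\tfrac{1}{2}-\tfrac{\epsilon}{4}$, put $\mu(t):=\tfrac{\delta}{2}e^{-K_1 t/(2\delta)}$ so that $\mu(T)$ coincides with $\mu$ from \eqref{eq2.5}, and define $V(t,x):=\exp(\mu(t)\phi(t,x)+\mu(t)\nu|x|^2)$. Writing $\psi:=\log V$, a formal calculation gives $(D_t+L)V=V[(D_t+L)\psi+\tfrac{1}{2}\langle a\nabla\psi,\nabla\psi\rangle]$ with $L$ the generator of \eqref{eq2.4}. By the second Lyapunov inequality in (iv),
\begin{equation*}
(D_t+L)\phi=\tfrac{1}{2}\Big(2D_t\phi+\sum_{i,j}\partial_j(a_{ij}\partial_i\phi)\Big)-\langle a\nabla\phi,\nabla\phi\rangle\le\tfrac{1}{2}he^{\epsilon\phi}-\langle a\nabla\phi,\nabla\phi\rangle;
\end{equation*}
expanding $\tfrac{1}{2}\langle a\nabla\psi,\nabla\psi\rangle$ into $\phi$-$\phi$, $\phi$-$x$, $x$-$x$ pieces and observing that $\mu'(t)\phi\le0$ since $\mu$ is decreasing and $\phi\ge0$, one arrives at
\begin{equation*}
(D_t+L)V\le\tfrac{\mu(t)}{2}he^{\epsilon\phi}V-\mu(t)\big(1-\tfrac{\mu(t)}{2}\big)\langle a\nabla\phi,\nabla\phi\rangle V+R(t,x)V,
\end{equation*}
where $R$ collects the $O(\nu)$ remainder, including the cross term $\mu(t)\nu(1-\mu(t))\langle a\nabla\phi,x\rangle$ coming from both $\tfrac{1}{2}\langle a\nabla\psi,\nabla\psi\rangle$ and $\langle b,x\rangle$ inside $L(|x|^2)$, plus pure $|x|^2$-contributions. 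The cross term is absorbed by Young's inequality into a fraction of the negative $\langle a\nabla\phi,\nabla\phi\rangle V$ term and an $|x|^2V$ remainder; the quantitative choice $\nu=\mu/(12KT)$ in \eqref{eq2.5} is exactly what makes this absorption work given the bilateral ellipticity $K^{-1}|\lambda|^2\le\langle a\lambda,\lambda\rangle\le K|\lambda|^2$ of (iii). The first Lyapunov inequality $2D_t\phi\le K_1\phi$ of (iv) enters via the choice of $\mu(t)$: the exponential decay rate $K_1/(2\delta)$ is tuned so that any residual $D_t\phi$-contribution appearing at the regularization step is absorbed into the decay of $\mu(t)$.

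Since $\phi$ is only weakly differentiable, the It\^o formula for $V$ must be justified. I would mollify $\phi$ on $Q^{m+1}$ to obtain smooth $\phi_j$, apply the classical It\^o formula to $V_j:=\exp(\mu(t)\phi_j+\mu(t)\nu|x|^2)$ up to $t\wedge\tau_m$, and pass to the limit $j\to\infty$ using Krylov-type estimates for integrals $\int_0^{\cdot}f(s+r,X_r)dr$ with $f$ in the relevant $\mathbb{L}^q_p$ spaces (the estimates prepared in Section~4). The distributional Lyapunov inequalities of (iv), classical at the smoothed level, survive under this limit in the $dP\,dt$-integrated form. Taking expectation of the inequality above, the bad term $\tfrac{\mu}{2}he^{\epsilon\phi}V$ is dominated via H\"older's inequality using hypothesis (H): one raises $he^{\epsilon\phi}$ to the power $r>1$ from (H), integrates against a Gaussian weight $e^{-a|x|^2}$ supplied by the $e^{\mu\nu|x|^2}$ factor of $V$, and the residual exponential of $\phi$ is controlled because $\mu(t)+\epsilon\le\delta+\epsilon<2$. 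A Gronwall argument then yields $EV(t\wedge\tau_m,X_{t\wedge\tau_m})\le N$ uniformly in $m$; the supremum bound follows from Doob's inequality applied to the local martingale part of $V(\cdot,X_{\cdot})$.

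Finally, on $\{\xi\le T\}$ the continuity of $(s+t,X_t)$ in $Q'$ forces the path either to approach $\partial Q$ inside $\mathbb{R}_+\times\mathbb{R}^d$ --- making $\phi(s+t,X_t)\to\infty$ by Assumption (vi) --- or to satisfy $|X_t|\to\infty$; in either case $V(t,X_t)\to\infty$ as $t\uparrow\xi$, contradicting the uniform moment bound unless $P(\xi\le T)=0$. This gives $\xi=\infty$ a.s.\ together with the claimed moment estimate. The main obstacle is the regularization/Krylov step: carrying out the It\^o computation for $V$ rigorously under the low regularity of $\phi$, while keeping enough control to pass the distributional Lyapunov inequalities to the stochastic setting and simultaneously tracking the constants carefully enough to recover the precise values of $\mu$ and $\nu$ in \eqref{eq2.5}.
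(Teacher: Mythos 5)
Your overall Lyapunov-function intuition is reasonable, but the proposal has a genuine gap exactly at the step where hypothesis (H) is supposed to enter, and this is the heart of the matter. After taking expectations in your It\^o inequality you must control a term of the form $E\int_0^{t\wedge\tau_m}\mu\,h\,e^{\epsilon\phi}V(r,X_r)\,dr$ with $V=e^{\mu\phi+\mu\nu|x|^2}$. You claim this is handled by H\"older's inequality, ``integrating against a Gaussian weight $e^{-a|x|^2}$ supplied by the $e^{\mu\nu|x|^2}$ factor of $V$.'' But $e^{\mu\nu|x|^2}$ grows in $|x|$; it cannot supply the decaying weight $e^{-a|x|^2}$ appearing in (H). Condition (H) is a deterministic weighted space-time integrability condition on $h^r$, and to use it one needs to convert an expectation over the law of the (singular-drift) process into such a weighted Lebesgue integral. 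In the paper this conversion is precisely the content of Section 4: the Feynman--Kac representation (Theorem \ref{Feymann}), the invariance identity of Lemma \ref{cor3.8} stating that $c(t)=\int e^{-2\psi}v_T(t,x)\,dx$ is constant (a duality expressing that $e^{-2\psi}dx$ is the right reference measure for the gradient-type drift), the auxiliary driftless process $Y$ with the randomized exit time $\gamma$ of bounded density (Lemma \ref{lemm3.9}), Girsanov transformations and Krylov estimates, all combined in Lemmas \ref{lemm3.10}--\ref{lemm3.11} and Theorem \ref{th3.12}. It is through $e^{-2\hat\psi}\le e^{-2\nu|x|^2}$ in that duality that the Gaussian weight of (H) appears --- not through $V$. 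Moreover, your H\"older step is circular as a Gronwall scheme: the conjugate factor would be $\big(E\,e^{r'(\mu+\epsilon)\phi}e^{r'\mu\nu|x|^2}\big)^{1/r'}$ with $r'>1$, a strictly higher exponential moment than the quantity you are trying to bound, so the loop does not close; the observation $\mu+\epsilon<2$ alone does not rescue this.

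Two further points. First, the paper's route to the supremum bound is not Doob's inequality (there is no usable sub/supermartingale structure for $V(\cdot,X_\cdot)$); it is the Gyongy--Krylov lemma on transformations of stochastic inequalities, which converts $EM_\tau\le N_1$ for all stopping times $\tau\le\tau_Q(X_\cdot)$ into $E\sqrt{M_*}\le 3N_1$, and this is also where the factor $1/2$ in $\mu=(\delta/2)e^{-TK_1/(2\delta)}$ comes from, together with a pathwise Gronwall argument using $2D_t\phi\le K_1\phi$. Second, your mollification/localization step and the final non-explosion deduction from Assumption \ref{ass2.1}(vi) do match the paper (it localizes via cutoffs on $Q^n$, mollifies $\phi$, and passes to the limit using the Girsanov comparison Lemma \ref{lemm3.6}), but as written your proof of the key a priori estimate is not salvageable without importing the duality machinery of Section 4, so the argument as proposed is incomplete.
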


\begin{remark}\label{gk}
Obviously, the Kolmogrov operator $\mathcal{L}$ corresponding to \eqref{eq2.4} is given by
\begin{align}\label{K}
\mathcal{L}=div(\sigma\sigma^*\nabla)+\langle\sigma^*\nabla,\sigma^*\nabla\rangle,
\end{align}
where $\langle\cdot,\cdot\rangle$ denotes the inner product in $\mathbb{R}^d$. Recalling that $div{\small\circ}\sigma$ is the adjoint of the 'geometric' gradient $\sigma^*\nabla$ (i.e. taking into account the geometry given to $\mathbb{R}^d$ through $\sigma$), we see that \eqref{eq2.4} is the geometrically correct analogue of the SDE
$$dX_t=-\nabla \phi(X_t)dt+dW_t,\quad t\geq0$$ studied in \cite{KR}. So, the Laplacian $\Delta$ in \cite{KR} is replaced by the Laplace-Beltrami operator $div(\sigma\sigma^*\nabla)(=\sum_{i,j=1}^d\partial_j(a_{ij}\partial_i))$ and the Euclidean gradient $\nabla$ in \cite{KR} is replaced by the 'geometric' gradient $\sigma^*\nabla$. Also condition \eqref{eq5} is then the exact analogue of condition \eqref{eq2} above, which was assumed in \cite{KR}.

\end{remark}

\section{Existence and uniqueness of a maximal local strong solution to the SDE \eqref{eq1} on an arbitrary domain in $\mathbb{R}_+\times\mathbb{R}^{d}$ }
Theorem \ref{mainadd1} says that there exists a unique maximally local strong solution to the SDE \eqref{eq1}. Before going to its proof we give some results as preparation.
\subsection{Preparation}
 Consider the SDE \eqref{eq1} in $[0,\infty)\times\mathbb{R}^d.$ First we recall two results from \cite{Zhang2011}.\\
\begin{lemma}\label{lemm3.1} (\cite[Theorem 1.1]{Zhang2011})
Assume that $p$, $q\in(2,\infty)$ satisfying $d/p+2/q<1$ and the following conditions hold.\\
(i) $|b|$, $|\nabla\sigma|\in\mathbb{L}_p^{q,loc} $.\\
(ii) For all $1\leq i,j\leq d$, $[0,\infty)\times\mathbb{R}^d\ni(t,x)\rightarrow \sigma_{ij}(t,x)\in\mathbb{R}$ is uniformly continuous in $x$ locally uniformly with respect to $t\in[0,\infty)$, and there exist positive constants $K$ and $\delta$ such that for all $(t,x)\in [0,\infty)\times{\mathbb{R}^d}$
\begin{align}\label{uniell}
\delta|\lambda|^2\leq|\sigma^*(t,x)\lambda|^2\leq K|\lambda|^2,\quad \forall \lambda\in\mathbb{R}^d.
\end{align}
Then for any $(\mathcal{F}_t)-$stopping time $\tau$ and $x\in\mathbb{R}^d$, there exists a unique $(\mathcal{F}_t)$-adapted continuous $\mathbb{R}^d$-velued process $(X_t)_{t\geq0}$ such that
\begin{align}\label{solutioninte}
P\left\{\omega:\int_0^T|b(r,X_r(\omega))|dr+\int_0^T|\sigma(r,X_r(\omega))|^2dr<\infty,\forall T\in[0,\tau(\omega))\right\}=1,
\end{align}
and
\begin{align}\label{eqadd3}
X_t=x+\int_0^tb(r,X_r)dr+\int_0^t\sigma(r,X_r)dW_r, \quad \forall t\in [0,\tau) \quad a.s,
\end{align}
which means that if there is another $(\mathcal{F}_t)$-adapted continuous stochastic process $(Y_t)_{t\geq0}$ also satisfying \eqref{solutioninte} and \eqref{eqadd3}, then
\begin{align*}
P\left\{\omega:X_t(\omega)=Y_t(\omega),\forall t\in[0,\tau(\omega))\right\}=1.
\end{align*}
Moreover, for almost all $\omega$ and all $t\geq0$, $ x\rightarrow X_t(\omega,x)$ is a homeomorphism on $\mathbb{R}^d$
and there exists a function $t\rightarrow C_t\in(0,\infty)$ such that $C_t\rightarrow \infty$ as $t\rightarrow \infty$ and for all $t>0$ and all bounded measurable function $\psi$, for $x$, $y\in\mathbb{R}^d$,
\begin{align*}
|E\psi(X_t(x))-E\psi(X_t(y))|\leq C_t\Vert\psi\Vert_{\infty}|x-y|.
\end{align*}
\end{lemma}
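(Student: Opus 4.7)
My plan is to follow the Zvonkin--Zhang transformation that removes the singular drift. Fix a horizon $T>0$ and a large parameter $\lambda$, and solve the vector-valued backward parabolic Kolmogorov system on $[0,T]\times\mathbb{R}^d$
\begin{equation*}
\partial_t u+\tfrac{1}{2}\sum_{i,j}a_{ij}\partial_{ij}u+b\cdot\nabla u-\lambda u=-b,\qquad u(T,\cdot)=0,
\end{equation*}
read componentwise. The input I would rely on is Krylov's $L^p$-theory for non-divergence parabolic equations with uniformly continuous leading coefficients, combined with a fixed-point argument to absorb the singular first-order term $b\cdot\nabla u$: under $d/p+2/q<1$, the solution satisfies $\partial_tu,\nabla^2u\in\mathbb{L}_p^q$, and anisotropic parabolic Morrey embedding forces $\nabla u$ bounded and continuous with $\|\nabla u\|_\infty\to 0$ as $\lambda\to\infty$. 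For $\lambda$ sufficiently large $\Phi_t(x):=x+u(t,x)$ is then, uniformly in $t\in[0,T]$, a $C^1$-diffeomorphism of $\mathbb{R}^d$ whose differential and that of its inverse are uniformly bounded and uniformly continuous.

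The next step is to apply Krylov's generalised It\^o formula to $\Phi_t(X_t)$. This is valid because $\partial_tu,\nabla^2u\in\mathbb{L}_p^q$ and any candidate solution obeys the Krylov estimate $E\int_0^{T\wedge\tau}f(r,X_r)\,dr\leq C\|f\|_{\mathbb{L}_p^q}$, itself a consequence of uniform ellipticity~\eqref{uniell} together with Girsanov after a localisation. The PDE is designed so that the singular drift of $X$ and the first-order term in the It\^o expansion of $u$ cancel, leaving
\begin{equation*}
Y_t:=\Phi_t(X_t)=\Phi_0(x)+\int_0^t\lambda u(r,\Phi_r^{-1}(Y_r))\,dr+\int_0^t((\mathbb{I}+\nabla u)\sigma)(r,\Phi_r^{-1}(Y_r))\,dW_r,
\end{equation*}
an SDE with bounded continuous drift and a uniformly elliptic, uniformly continuous diffusion coefficient whose weak spatial derivative lies in $\mathbb{L}_p^q$. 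Weak existence for $Y$ follows from a Girsanov transformation applied to a baseline diffusion with coefficient $\tilde\sigma$, and pathwise uniqueness at this level of Sobolev regularity follows by another Zvonkin-type argument on the now-smaller drift; Yamada--Watanabe then delivers a unique strong solution $Y$. Transporting back through $\Phi_t^{-1}$ gives the existence and pathwise uniqueness of $X$ on $[0,\tau)$, and the integrability~\eqref{solutioninte} is recovered once more from the Krylov estimate.

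For the homeomorphism property and the gradient-type bound I would push the structure of $Y$ through the deterministic spatial homeomorphisms $\Phi_0$ and $\Phi_t^{-1}$. Kunita's $L^p$-stability theory for stochastic flows, applied after smooth mollification of $\tilde b,\tilde\sigma$ and taking limits in the $\mathbb{L}_p^q$-topology, gives that $y\mapsto Y_t(y)$ is almost surely a homeomorphism of $\mathbb{R}^d$, and the analogous statement for $X$ follows by composition. For the bound $|E\psi(X_t(x))-E\psi(X_t(y))|\leq C_t\|\psi\|_\infty|x-y|$, I would establish a Bismut--Elworthy--Li type gradient estimate $|\nabla P_t^Y\psi|\leq C_t\|\psi\|_\infty$ for the smoothly approximating semigroups (using uniform ellipticity of $\tilde\sigma$ and boundedness of $\tilde b$), pass to the limit by $\mathbb{L}_p^q$-stability, and then use $P_t\psi(x)=P_t^Y(\psi\circ\Phi_t^{-1})(\Phi_0(x))$ together with the Lipschitz continuity of $\Phi_0$.

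The main obstacle I anticipate is the analytic input: producing an $\mathbb{L}_p^q$-solution $u$ of the Kolmogorov equation above, with the crucial smallness $\|\nabla u\|_\infty\to 0$ as $\lambda\to\infty$, under only uniformly continuous (not H\"older) leading coefficients and a singular first-order term. This rests on Krylov's parabolic $L^p$-estimates in the Chiarenza--Frasca--Ruiz spirit, and the condition $d/p+2/q<1$ is precisely what turns $\nabla^2 u\in\mathbb{L}_p^q$ into continuity and boundedness of $\nabla u$ via anisotropic parabolic Morrey embedding; without it the whole transformation scheme collapses. Once these analytic facts are granted, the remainder of the argument is essentially a transfer of classical Lipschitz-SDE and stochastic-flow theory through the diffeomorphism $\Phi_t$.
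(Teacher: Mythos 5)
This statement is not proved in the paper at all: it is quoted verbatim as Theorem 1.1 of \cite{Zhang2011}, so there is no internal proof to compare against. Your sketch essentially reconstructs Zhang's original argument — the Zvonkin transformation $\Phi_t(x)=x+u(t,x)$ built from the backward Kolmogorov equation with zero terminal data, Krylov-type $\mathbb{L}_p^q$ estimates and the smallness $\Vert\nabla u\Vert_\infty\to0$ for large $\lambda$ under $d/p+2/q<1$, transfer to the transformed SDE with bounded drift and Sobolev diffusion, Girsanov plus Yamada--Watanabe, and flow/stability arguments for the homeomorphism and gradient-type bound — which is indeed the correct route, so as an outline it is sound. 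The one point your plan does not address is that the lemma assumes only $|b|,|\nabla\sigma|\in\mathbb{L}_p^{q,loc}$ and asserts uniqueness up to an arbitrary stopping time $\tau$; your construction fixes a horizon $T$ and implicitly uses global $\mathbb{L}_p^q$ bounds, so a routine localization in time (cutting $b$ and $\nabla\sigma$ and patching solutions, as Zhang does) is still needed to pass from the global statement to the one claimed here.
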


\indent Below we shall make essential use of Krylov's estimate. Therefore, we recall them here for reader's convenience.

\begin{lemma}\label{lemm3.2} (\cite[Theorem 2.1, Theorem 2.2]{Zhang2011})
Suppose $\sigma$ satisfies the conditions in  Lemma \ref{lemm3.1} and  let $(X_t)_{t\geq 0}$ be continuous and $(\mathcal{F}_t)$-adapted $\mathbb{R}^d$-valued process satisfying \eqref{solutioninte} and \eqref{eqadd3}. Fix an $(\mathcal{F}_t)$-stopping time  $\tau$ and let $T_0>0$.\\
(1) If $b$ is  Borel measurable and bounded, then for $p,$ $q\in (1,\infty)$ with
$$\frac{d}{p}+\frac{2}{q}<2,$$ then there exists a positive constant $N=N(K,d,p,q,T_0,\Vert b\Vert_{\infty})$ such that for all $f\in \mathbb{L}_p^q(T_0)$ and $0\leq S<T\leq T_0$,
\begin{equation}\label{eq3.2}
E\left(\int_{S\wedge\tau}^{T\wedge\tau}|f(s,X_s)|ds\bigg|{\mathcal{F}_S}\right)\leq N\Vert f\Vert_{\mathbb{L}_p^q(S,T)}.
\end{equation}
(2) If $b\in \mathbb{L}_p^q$ provided with
\begin{align}\label{b}
\frac{d}{p}+\frac{2}{q}<1,\quad p,q\in(1,\infty),
\end{align}
then there exists a positive constant $N=N(K,d,p,q,T_0,\Vert b\Vert_{\mathbb{L}_p^q(T_0)})$ such that for all $f\in \mathbb{L}_p^q(T_0)$ and $0\leq S<T\leq T_0$,
$$E\left(\int_{S\wedge\tau}^{T\wedge\tau}|f(s,X_s)|ds\bigg|{\mathcal{F}_S}\right)\leq N\Vert f\Vert_{\mathbb{L}_p^q(S,T)}.$$
\end{lemma}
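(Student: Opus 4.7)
The plan is to follow the standard analytic-probabilistic strategy underlying Krylov's inequality: reduce the probabilistic estimate to an $L^\infty$ bound for the solution of an auxiliary backward parabolic Cauchy problem, and then transfer that bound to the stochastic process via It\^o's formula. The two parts of the lemma correspond to two PDE regimes: a bounded lower-order coefficient (part (1), subcritical condition $d/p+2/q<2$) versus a singular lower-order coefficient (part (2), stricter condition $d/p+2/q<1$).

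For part (1), the cleanest route is to eliminate the drift by Girsanov's transformation. Since $|b|$ is bounded and $\sigma^{-1}$ exists and is bounded by the uniform ellipticity assumed in Lemma \ref{lemm3.1}(ii), Novikov's condition for $\sigma^{-1}b$ is trivially satisfied, so there exists $\tilde P \sim P$ on $\mathcal{F}_{T_0}$ under which $X$ becomes a driftless diffusion with coefficient $\sigma$. For the driftless equation, the classical Krylov estimate in the subcritical range $d/p+2/q<2$ is obtained by solving $\partial_t u + \tfrac12\sum_{i,j=1}^d a_{ij}\partial_{ij}u = -|f|\mathbf{1}_{[S,T]}$ on $[S,T_0]\times\mathbb{R}^d$ with zero terminal data, using parabolic $L^p$-$L^q$ theory to get $\|u\|_\infty \leq C\|f\|_{\mathbb{L}_p^q(S,T)}$, and applying It\^o's formula to $u(t,X_t)$ between $S\wedge\tau$ and $T\wedge\tau$. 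One returns to $P$ by writing $E[\,\cdot\,|\mathcal{F}_S] = \tilde E[(d\tilde P/dP)^{-1}\,\cdot\,|\mathcal{F}_S]$ and applying H\"older's inequality together with exponential moment bounds on the density, which are uniform because $\|b\|_\infty<\infty$.

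For part (2), Girsanov becomes delicate since $b$ is only in $\mathbb{L}_p^q$, so one attacks the perturbed PDE directly. Consider
\begin{equation*}
\partial_t u + \tfrac12\sum_{i,j=1}^d a_{ij}\partial_{ij}u + b\cdot\nabla u = -|f|\mathbf{1}_{[S,T]},\qquad u(T_0,\cdot)=0,
\end{equation*}
on $[S,T_0]\times\mathbb{R}^d$. The condition $d/p+2/q<1$ is precisely the threshold at which the $b\cdot\nabla u$ term behaves as a lower-order perturbation of the principal part in the $W^{1,2}_{p,q}$ scale: parabolic Calder\'on-Zygmund theory together with a parabolic Sobolev embedding yields the a priori bound $\|u\|_\infty+\|\nabla u\|_\infty \leq N\|f\|_{\mathbb{L}_p^q(S,T)}$ with $N$ depending only on the stated data. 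Applying the generalized It\^o formula for $W^{1,2}_{p,q}$ functions along the trajectory of $X$ between $S\wedge\tau$ and $T\wedge\tau$, and taking conditional expectation, gives
\begin{equation*}
u(S\wedge\tau,X_{S\wedge\tau}) = E\Big(\int_{S\wedge\tau}^{T\wedge\tau}|f(s,X_s)|\,ds\,\Big|\,\mathcal{F}_S\Big),
\end{equation*}
after the martingale term $\int(\sigma^*\nabla u)(s,X_s)\cdot dW_s$ is killed by localization; combined with the $L^\infty$ bound on $u$ this is the desired inequality.

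The main obstacle is the legitimacy of It\^o's formula at this regularity: $u$ is only a $W^{1,2}_{p,q}$ solution, not $C^{1,2}$, and $b$ is merely Borel, so ordinary It\^o does not apply. The standard resolution is a mollification bootstrap: regularize $b$, $\sigma$, and $f$ to produce smooth approximations $u^n$ and classical processes $X^n$ on which all manipulations are legal; then use the a priori PDE estimate (uniform in the regularization) together with tightness of $\{X^n\}$ and stability of the parabolic equation under $\mathbb{L}_p^q$ convergence of coefficients to pass to the limit. It is also in this step that the constant $N$ acquires its specific dependence on $K$, $d$, $p$, $q$, $T_0$ and $\|b\|_{\mathbb{L}_p^q(T_0)}$ (respectively $\|b\|_\infty$), through the stability estimates for the PDE.
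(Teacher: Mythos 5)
The paper offers no proof of this lemma at all --- it is imported verbatim from \cite{Zhang2011} (Theorems 2.1 and 2.2) --- and your strategy is essentially the argument of that cited source: an a priori mixed-norm estimate ($\|u\|_\infty$ in part (1), $\|u\|_\infty+\|\nabla u\|_\infty$ in part (2), the latter exactly where $d/p+2/q<1$ enters via parabolic Sobolev embedding) for a backward Cauchy problem, transferred to the process by the It\^o--Krylov formula, with Girsanov disposing of the bounded drift in (1) and a mollification/stability bootstrap legitimizing It\^o at $W^{1,2}_{p,q}$ regularity in (2), so your proposal is on track. Two cosmetic points only: the It\^o display should keep the terminal term $E\bigl[u(T\wedge\tau,X_{T\wedge\tau})\,\big|\,\mathcal{F}_S\bigr]$ (harmless, since it is dominated by $\|u\|_\infty$, so one gets the inequality rather than the equality you wrote), and the change back from $\tilde P$ to $P$ under conditioning needs the Bayes normalization together with conditional H\"older, neither of which changes the stated dependence of $N$.
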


We note that  actually condition $f\in\mathbb{L}_{p}^q(T_0)$ with $p,q\in(1,\infty)$ and $\frac{d}{p}+\frac{2}{q}<1$  in the above Lemma \ref{lemm3.2} can be improved to $f\in\mathbb{L}_{p'}^{q'}(T_0)$ with $p',q'\in(1,\infty)$ and $\frac{d}{p'}+\frac{2}{q'}<2$  without assuming that $b$ is bounded, which we shall prove in the following lemma. Let $K_0$ and $T_0$ be some positive constants and we give the following assumption.
\\
\begin{assumption}\label{ass3.2}(i)  For all $1\leq i,j\leq d$, $[0,\infty)\times\mathbb{R}^d\ni(t,x)\rightarrow \sigma_{ij}(t,x)\in\mathbb{R}$ is uniformly continuous in $x$ locally uniformly with respect to $t\in[0,\infty)$, and there exist positive constants $K$ and $\delta$ such that for all $(t,x)\in [0,\infty)\times{\mathbb{R}^d}$
\begin{align}\label{uniell1}
\delta|\lambda|^2\leq|\sigma^*(t,x)\lambda|^2\leq K|\lambda|^2,\quad \forall \lambda\in\mathbb{R}^d.
\end{align} And $|\nabla\sigma|\in\mathbb{L}_p^{q,loc} $ with $p$, $q\in(2,\infty)$ satisfying $d/p+2/q<1$.\\
(ii)  $b(t,x)$ is Borel measurable with $\Vert b\Vert_{\mathbb{L}_p^q}\leq K_0$  and $b(t,x)=0$ for $t>T_0$.
\end{assumption}

\begin{lemma}\label{lemm3.4}
Let Assumption \ref{ass3.2} hold. Let $(X_t)_{t\geq0}$  be a continuous $(\mathcal{F}_t)$-adapted process such that \eqref{solutioninte} and \eqref{eqadd3} are satisfied.
Then for any Borel function $f\in{\mathbb{L}_{p'}^{q'}(S,T)}$ with $p',q'\in(1,\infty)$ and $d/p'+2/q'<2$, and for $0\leq S<T\leq T_0$, we have
\begin{equation}\label{eq3.42}
E\int_{S}^{T} |f(t,X_t)|dt\leq N(d,p',q',K,\Vert b\Vert_{\mathbb{L}_p^q(T_0)})\Vert f\Vert_{\mathbb{L}_{p'}^{q'}(S,T)}.
\end{equation}
Furthermore, for any constant $\kappa\geq 0$ and $g\in \mathbb{L}_{p}^{q}(T_0),$
\begin{align}\label{expx}
E\exp(\kappa\int_0^{T_0}|g(t,X_t)|^2dt)<\infty.
\end{align}
\end{lemma}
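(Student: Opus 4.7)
The plan is to apply a Girsanov transformation to remove the singular drift $b$, reducing the problem to the drift-less SDE $X_t = x + \int_0^t\sigma(r,X_r)\,d\tilde W_r$, for which Lemma \ref{lemm3.2}(1) (specialized to $b\equiv 0$, which is bounded) already furnishes the Krylov estimate on the enlarged range $d/p'+2/q'<2$. Setting $\beta:=\sigma^{-1}b\in\mathbb{L}_p^q(T_0)$ and
\[
\eta_t := \exp\Bigl(-\int_0^t\beta(r,X_r)\cdot dW_r-\tfrac12\int_0^t|\beta(r,X_r)|^2\,dr\Bigr),\quad t\in[0,T_0],
\]
my first task is to show that $\eta$ is a true $P$-martingale with $E\eta_{T_0}^{\pm m}<\infty$ for every $m\in\mathbb{N}$. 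By Novikov's criterion combined with Khasminskii's lemma this reduces to
\[
\sup_{S\in[0,T_0-\delta]}E\Bigl(\int_S^{S+\delta}|\beta(r,X_r)|^2\,dr\,\Big|\,\mathcal F_S\Bigr)\xrightarrow[\delta\downarrow 0]{}0.\qquad(\star)
\]

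The hard part will be establishing $(\star)$: one cannot invoke Lemma \ref{lemm3.2}(2) directly, because $|\beta|^2\in\mathbb{L}_{p/2}^{q/2}$ has critical exponent $2(d/p+2/q)$, which is $<2$ but may well exceed $1$. My workaround, in the spirit of Krylov--R\"ockner, is to go through a backward parabolic equation: for large $\lambda$, solve $\partial_t v + \tfrac12\mathrm{tr}(aD^2v) + b\cdot\nabla v - \lambda v = -|\beta|^2$ with $v(T_0,\cdot)=0$ in the parabolic Sobolev space $W^{1,2}_{p/2,q/2}$. Parabolic $L^p$-theory for singular drifts produces a solution with $\|v\|_\infty\leq N\lambda^{-\gamma}\|\beta\|_{\mathbb{L}_p^q}^2$ for some $\gamma>0$ depending on $p,q$; after regularizing $v$ by mollification and using Lemma \ref{lemm3.2}(2) to control the residual $X$-integrals in the generalized It\^o formula, one rewrites
\[
\int_S^T|\beta|^2(r,X_r)\,dr = v(S,X_S)-v(T,X_T)+\lambda\!\int_S^T v(r,X_r)\,dr + M_{S,T}
\]
with $M_{S,T}$ a martingale; taking $E(\cdot|\mathcal F_S)$ bounds the left-hand side of $(\star)$ by $(2+\lambda\delta)\|v\|_\infty$, which is arbitrarily small by choosing $\lambda$ large.

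Granting $(\star)$, Girsanov's theorem produces $\tilde P:=\eta_{T_0}\cdot P$ under which $\tilde W_t:=W_t+\int_0^t\beta(r,X_r)dr$ is a Brownian motion and $(X_t)$ solves the drift-less SDE $X_t = x + \int_0^t\sigma(r,X_r)d\tilde W_r$. Lemma \ref{lemm3.2}(1) with $b\equiv 0$ under $\tilde P$ then gives, for every $f\in\mathbb{L}_{p'}^{q'}(T_0)$ with $d/p'+2/q'<2$ and $p',q'\in(1,\infty)$,
\[
\tilde E\Bigl(\int_S^T|f(r,X_r)|\,dr\,\Big|\,\mathcal F_S\Bigr)\leq N_0\|f\|_{\mathbb{L}_{p'}^{q'}(S,T)}.
\]
Iterating this conditional estimate over an $n$-simplex via the tower property yields the higher-moment bound $\tilde E(\int_S^T|f|dr)^n\leq n!N_0^n\|f\|^n$. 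To transfer back to $P$, write $E\int_S^T|f|dr = \tilde E[\eta_{T_0}^{-1}\int_S^T|f|dr]$ and apply H\"older's inequality with conjugate exponents $(m,m')$; the resulting bound is $(\tilde E\eta_{T_0}^{-m})^{1/m}\cdot((m')!)^{1/m'}N_0\|f\|_{\mathbb{L}_{p'}^{q'}(S,T)}$, and $(\tilde E\eta_{T_0}^{-m})^{1/m}=(E\eta_{T_0}^{1-m})^{1/m}$ is finite by the moment bounds on $\eta_{T_0}^{\pm1}$ established above; this proves \eqref{eq3.42}.

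For \eqref{expx}, note that $|g|^2\in\mathbb{L}_{p/2}^{q/2}(T_0)$ with $d/(p/2)+2/(q/2)=2(d/p+2/q)<2$, so \eqref{eq3.42} applies and gives $E\int_S^T|g(r,X_r)|^2\,dr\leq N\|g\|_{\mathbb{L}_p^q(S,T)}^2$. The right-hand side tends to $0$ as $T-S\to 0$ by absolute continuity of the $\mathbb{L}_p^q$-norm in time, so Khasminskii's lemma yields $E\exp\bigl(\kappa\int_0^{T_0}|g(r,X_r)|^2\,dr\bigr)<\infty$ for every $\kappa\geq 0$.
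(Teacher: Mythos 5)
Your plan hinges on performing the Girsanov change of measure \emph{along the singular-drift process} $X$: to make $\eta$ a true martingale with positive and negative moments you must first prove $(\star)$, i.e.\ a conditional Krylov-type bound for $|\sigma^{-1}b|^2\in\mathbb{L}_{p/2}^{q/2}$ along $X$, whose integrability index $2(d/p+2/q)$ may lie in $(1,2)$. But that is essentially the content of the lemma you are proving, and your PDE workaround does not close the circle: after mollifying $v$ and applying It\^o's formula to $v^{(n)}(t,X_t)$, the commutator/residual terms (in particular $\tfrac12\,a\!:\!D^2v^{(n)}-(\tfrac12\,a\!:\!D^2v)^{(n)}$) converge to zero only in $\mathbb{L}_{p/2}^{q/2}$, and the only estimate available along $X$ at this stage, Lemma \ref{lemm3.2}(2), covers integrands with $d/p+2/q<1$ — exactly not this range. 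So the step ``using Lemma \ref{lemm3.2}(2) to control the residual $X$-integrals'' is circular as written. (The circle can be broken by approximating $b$ by bounded drifts, proving the estimate with constants depending only on $\Vert b\Vert_{\mathbb{L}_p^q}$, and passing to the limit via stability of the strong solutions, but that is a substantial additional layer you have not supplied; likewise the bound $\Vert v\Vert_\infty\leq N\lambda^{-\gamma}\Vert\beta\Vert^2_{\mathbb{L}_p^q}$ is an input from $L^q(L^p)$ parabolic theory, not among the tools the paper provides.) A second, smaller gap: for \eqref{expx} you feed the unconditional estimate \eqref{eq3.42} into Khasminskii's lemma, but Lemma \ref{lemm6.1} requires a bound on $E\bigl(\int_S^T|g(t,X_t)|^2dt\,\big|\,\mathcal{F}_S\bigr)$ with a deterministic right-hand side, and your transfer back to $P$ by H\"older against $\eta_{T_0}^{-1}$ does not yield such a conditional bound.

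The paper's proof avoids all of this by running the change of measure in the opposite direction. It works with the driftless process $M_t=x+\int_0^t\sigma(s,M_s)dW_s$, for which Lemma \ref{lemm3.2}(1) (with $b\equiv0$, hence bounded) already gives the \emph{conditional} Krylov estimate in the full range $d/p_1+2/q_1<2$; Khasminskii's lemma then yields all exponential moments of $\int_0^{T_0}|b(t,M_t)|^2dt$, hence all moments of the Girsanov density $\rho$ \emph{defined along $M$}, with no Novikov-type verification along $X$ needed. A single H\"older inequality, $E\int_S^T|f(t,X_t)|dt=E\int_S^T\rho\,|f(t,M_t)|dt\leq(E\rho^\alpha)^{1/\alpha}\bigl(E\int_S^T|f(t,M_t)|^\beta dt\bigr)^{1/\beta}$ with $\beta$ close to $1$, then transfers the estimate from $M$ to $X$, and \eqref{expx} follows the same way with $(E\rho^2)^{1/2}$. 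If you want to salvage your route, the simplest fix is to adopt this direction of the measure change rather than repairing the PDE step.
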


\begin{proof}
   By Lemma \ref{lemm3.1} we obtain that there exists a unique  $(\mathcal{F}_t)$-adapted $\mathbb{R}^d$-valued process $(M_t)_{t\geq0}$ such that $M_t=x+\int_0^t\sigma(s,M_s)dW_s$, $t\geq0$.  For any $p_1$, $q_1\in(1,\infty)$ satisfying
$$\frac{d}{p_1}+\frac{2}{q_1}<2,$$  Lemma \ref{lemm3.2} implies that for $0<S<T\leq T_0$, and $f\in \mathbb{L}_{p_1}^{q_1}(S,T)$
\begin{equation}\label{eq3.3}
E\left(\int_{S}^{T}|f(t,M_t)|dt\bigg|{\mathcal{F}_S}\right)\leq N\Vert f\Vert_{\mathbb{L}_{p_1}^{q_1}(S,T)},
\end{equation}
where $N$ depends only on $d$, $K$, $p_1$, $q_1$, $T_0$. Applying \eqref{eq3.3} to $f=|g|^2$ we get
$$E\left(\int_S^T|g(t,M_t)|^2dt\bigg|{\mathcal{F}_S}\right)\leq N\Vert g^2\Vert_{\mathbb{L}_{p/2}^{q/2}(S,T)}= N \Vert g\Vert_{\mathbb{L}_p^q(S,T)}^2.$$
By Lemma \ref{lemm6.1}, for any $\kappa\in[0,\infty)$  we have
$$E\exp(\kappa\int_0^{T_0}|g(t, M_t)|^2dt)\leq N(\kappa, K, d, p, q,T_0, \Vert g\Vert_{\mathbb{L}_p^q(T_0)}),$$
then
\begin{align}\label{exf}
 E\exp(\kappa\int_0^{T_0}|g(t, M_t)|^2dt)\leq N(\kappa, K, d, p, q,T_0, \Vert g\Vert_{\mathbb{L}_p^q(T_0)}).
\end{align}
And also
\begin{align}\label{exb}
E\exp(\kappa\int_0^{T_0}|b(t, M_t)|^2dt)\leq N(\kappa, K, K_0, d, p, q, T_0).
\end{align}
 The integral over $(0,T_0)$ in \eqref{exb} can be replaced with the one over $(0,\infty)$ since $b(t,x)=0$ for $t>T_0$. Thus for any $\kappa\in[0,\infty)$
  \begin{equation}\label{eq3.4}
  E\exp(\kappa\int_0^\infty|b(t, M_t)|^2dt)<\infty,
  \end{equation}
  which  and \eqref{uniell1} implies that for any $c\in[0,\infty)$
  \begin{align}\label{star}
  E\exp(c\int_0^\infty (b^*(\sigma\sigma^*)^{-1}b)(t,M_t)dt)\leq E\exp(\frac{c}{\delta}\int_0^\infty|b(t, M_t)|^2dt)<\infty.
  \end{align}
 For $f\in\mathbb{L}_{p'}^{q'}(S,T)$ with $p'$, $q'\in(1,\infty)$, we can choose $\beta>1$ sufficiently close to $1$ such that $$\frac{d}{p'}+\frac{2}{q'}<\frac{2}{\beta}.$$ By Lemma \ref{lemm3.1} we obtain the existence and uniqueness of $(\mathcal{F}_t)-$adapted process $(X_t)_{t\geq0}$ which satisfies \eqref{solutioninte} and \eqref{eqadd3}.  By Lemma \ref{Girsanov}, we have
   \begin{align}\label{ef} E\int_{S}^{T}|f(t,X_t)|dt=E\int_{S}^{T}\rho |f(t,M_t)|dt&\leq (E\int_{S}^{T}\rho^\alpha dt)^{1/\alpha}(E\int_{S}^{T} |f(t,M_t)|^\beta dt)^{1/\beta}\nonumber\\& \leq (E\int_{0}^{T_0}\rho^\alpha dt)^{1/\alpha}(E\int_{S}^{T} |f(t,M_t)|^\beta dt)^{1/\beta},\end{align}
   where $\alpha$, $\beta>1$ satisfying $1/\alpha+1/\beta=1,$ and
     \begin{align*}
     \rho:=\exp(-\int_0^\infty b^*(\sigma^*)^{-1}(s,M_s)dW_s-\frac{1}{2}\int_0^\infty(b^*(\sigma\sigma^*)^{-1}b)(s,M_s)ds).
     \end{align*}
     Since
     \begin{align}\label{exponential}
     E\rho^\alpha=E\Big[&\Big(\exp(-2\alpha\int_0^\infty b^*(\sigma^*)^{-1}(s,M_s)dW_s-2\alpha^2\int_0^\infty(b^*(\sigma\sigma^*)^{-1}b)(s,M_s)ds)\Big)^{1/2}\nonumber\\
     &\Big(\exp((2\alpha^2-\alpha)\int_0^\infty(b^*(\sigma\sigma^*)^{-1}b)(s,M_s)ds)\Big)^{1/2}\Big],
     \end{align}
     by H\"older's inequality and the fact that exponential martingale is a supermartingale and \eqref{star}, we get
     \begin{align}\label{en}
     E\rho^\alpha \leq N.
     \end{align} Then
   \begin{align*} E\int_{S}^{T}|f(t,X_t)|dt&\leq N(T_0)(E\int_{S}^{T} |f(t,M_t)|^\beta dt)^{1/\beta}\\&\leq N(d,p_1,q_1,K,\Vert b\Vert_{\mathbb{L}_{p}^{q}(T_0)})\Vert f^\beta\Vert_{\mathbb{L}_{p_1}^{q_1}(S,T)}^{1/\beta}\\&=N(d,p_1,q_1,K,\Vert b\Vert_{\mathbb{L}_{p}^{q}(T_0)})\Vert f\Vert_{\mathbb{L}_{\beta p_1}^{\beta q_1}(S,T)} \end{align*}
   for $d/{p_1}+2/{q_1}<2,$ where $p_1=p'/\beta$, $q_1=q'/\beta$. Thus the above estimate implies \eqref{eq3.42}.

  Furthermore,  according to Lemma \ref{Girsanov}  and \eqref{exf},
  \begin{align*}
  E\exp(\kappa\int_0^{T_0}|g(t,X_t)|^2dt)=&E(\rho \exp(\kappa\int_0^{T_0}|g(t,M_t)|^2dt))\nonumber\\\leq & (E\rho^2)^{1/2}(E\exp(2\kappa\int_0^{T_0}|g(t,M_t)|^2dt))^{1/2}<\infty.\end{align*}
   \end{proof}

\begin{lemma}\label{lemm3.6}
 Let $b^{(i)}(t,x)$, $i=1,2$ satisfy Assumption \ref{ass3.2} and let $|b^{(1)}(t,x)-b^{(2)}(t,x)|\leq \overline{b}(t,x)$, where $\overline{b}$ also satisfies Assumption \ref{ass3.2}. Let $(X_t^{(i)}, W_t^{(i)})_{t\geq 0}$ satisfy
 $$X_t^{(i)}=x+\int_0^tb^{(i)}(s,X_s^{(i)})ds+\int_0^t\sigma(s,X_s^{(i)})dW_s^{(i)},\quad t\geq 0.$$
 Then for any bounded Borel functions $f^{(i)}$, $i=1,2$ given on $\mathcal{C}:=\mathcal{C}([0,\infty),\mathbb{R}^d)$ we have
 \begin{equation}\label{eq3.5}|Ef^{(1)}(X_\cdot^{(1)})-Ef^{(2)}(X_\cdot^{(2)})|\leq N(E|f^{(1)}(M_\cdot)-f^{(2)}(M_\cdot)|^2)^{1/2}+N\sup_{\mathcal{C}}|f^{(1)}|\Vert \overline{b}\Vert_{\mathbb{L}_p^q}
 \end{equation}
 where $M_t=\int_0^t\sigma(s,M_s)dW_s$, $t\geq 0$, and $N$ is a constant independent of $f$.
\end{lemma}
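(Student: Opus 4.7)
The plan is to use Girsanov's theorem---exactly as in the proof of Lemma \ref{lemm3.4}---to transfer each process $X^{(i)}$ onto the common driftless process $M$, and thereby reduce \eqref{eq3.5} to a comparison between the two resulting densities. For $i=1,2$, the construction leading to \eqref{ef} produces a Girsanov density $\rho^{(i)}$ (an exponential functional of $W$ and $\sigma^{-1}b^{(i)}$ evaluated along $M$) such that $Ef^{(i)}(X_\cdot^{(i)})=E[\rho^{(i)}f^{(i)}(M_\cdot)]$; moreover, the uniform moment bound $E|\rho^{(i)}|^\alpha\leq N$ from \eqref{en} holds with a common constant $N$, since $b^{(1)}$ and $b^{(2)}$ share the same $K$, $K_0$, $T_0$ from Assumption \ref{ass3.2}.

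Splitting
$$Ef^{(1)}(X^{(1)})-Ef^{(2)}(X^{(2)})=E[\rho^{(2)}(f^{(1)}(M)-f^{(2)}(M))]+E[(\rho^{(1)}-\rho^{(2)})\,f^{(1)}(M)],$$
the first summand is bounded by $(E|\rho^{(2)}|^2)^{1/2}(E|f^{(1)}(M)-f^{(2)}(M)|^2)^{1/2}\leq N(E|f^{(1)}(M)-f^{(2)}(M)|^2)^{1/2}$ via Cauchy-Schwarz, which gives the first term on the right of \eqref{eq3.5}. The second summand is dominated by $\sup_{\mathcal{C}}|f^{(1)}|\cdot E|\rho^{(1)}-\rho^{(2)}|$, so it suffices to prove $E|\rho^{(1)}-\rho^{(2)}|\leq N\|\overline{b}\|_{\mathbb{L}_p^q}$.

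For this, writing $\rho^{(i)}=e^{A^{(i)}}$ and applying the elementary bound $|e^a-e^b|\leq(e^a+e^b)|a-b|$ followed by Cauchy-Schwarz and \eqref{en} reduces the problem to showing $(E|A^{(1)}-A^{(2)}|^2)^{1/2}\leq N\|\overline{b}\|_{\mathbb{L}_p^q}$. Setting $\Delta b:=b^{(1)}-b^{(2)}$, the difference $A^{(1)}-A^{(2)}$ decomposes as (i) an It\^o integral of $\sigma^{-1}\Delta b$ against $dW$ plus (ii) a Lebesgue integral of $|\sigma^{-1}b^{(1)}|^2-|\sigma^{-1}b^{(2)}|^2$ along $M$. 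For (i), It\^o's isometry together with Lemma \ref{lemm3.4} applied to $|\sigma^{-1}\Delta b|^2\leq N\overline{b}^2\in\mathbb{L}_{p/2}^{q/2}$ (and the exponents satisfy $d/(p/2)+2/(q/2)=2(d/p+2/q)<2$) controls the square by $N\|\overline{b}\|_{\mathbb{L}_p^q}^2$. For (ii), the identity $|x|^2-|y|^2=(x-y)\cdot(x+y)$ and the boundedness of $\sigma^{-1}$ dominate the integrand pointwise by $N\overline{b}(|b^{(1)}|+|b^{(2)}|)$; a Cauchy-Schwarz in time followed by a Cauchy-Schwarz in $\omega$, combined with a Khas'minskii-type iteration of the conditional Krylov estimate (in the spirit of Lemma \ref{lemm6.1}) that gives $E[(\int_0^{T_0}u^2(s,M_s)\,ds)^2]\leq N\|u\|_{\mathbb{L}_p^q}^4$ for any $u\in\mathbb{L}_p^q$, then controls the contribution of (ii) by $NK_0\|\overline{b}\|_{\mathbb{L}_p^q}$. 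Summing (i) and (ii) yields the required linear-in-$\|\overline{b}\|_{\mathbb{L}_p^q}$ bound.

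The main obstacle is precisely (ii): Krylov's estimate on its own delivers only the first moment of $\int_0^{T_0}\overline{b}^2(s,M_s)\,ds$, whereas the argument needs its second moment. The Khas'minskii-type iteration of the conditional Krylov estimate, made available by the uniform $\mathbb{L}_p^q$-bound $K_0$ on $b^{(1)}$ and $b^{(2)}$, is what converts the first-moment Krylov bound into an $L^2$-bound and keeps the final rate in $\|\overline{b}\|_{\mathbb{L}_p^q}$ linear rather than quadratic.
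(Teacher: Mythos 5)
Your proposal is correct, but it follows a genuinely different route from the paper. The paper performs a single change of measure from $X^{(2)}$ to $X^{(1)}$ with the density $\overline{\rho}_\infty$ built from $\Delta b$ along $X^{(1)}$, splits the error into $I_1=E|f^{(1)}-f^{(2)}|(X^{(1)}_\cdot)\overline{\rho}_\infty$ and $I_2\sup|f^{(1)}|$ with $I_2=E|\overline{\rho}_\infty-1|$, and then exploits the martingale structure: writing $\overline{\rho}_T-1=\int_0^T(\Delta b^*(\sigma^*)^{-1})(s,X^{(1)}_s)\overline{\rho}_s\,dW^{(1)}_s$, It\^o's isometry plus H\"older reduce $I_2$ to a \emph{first}-moment Krylov bound for $\int_0^{T_0}|\overline{b}|^{2\beta}(s,X^{(1)}_s)\,ds$ (Lemma \ref{lemm3.4} with $\beta$ close to $1$), while $I_1$ needs a further Girsanov transfer from $X^{(1)}$ to $M$. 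You instead transfer both solutions to the driftless process $M$, so your first summand gives the term $N(E|f^{(1)}(M_\cdot)-f^{(2)}(M_\cdot)|^2)^{1/2}$ directly by Cauchy--Schwarz (cleaner than the paper's exponent $3/2$ detour), and you pay for it in the second summand: bounding $E|\rho^{(1)}-\rho^{(2)}|$ via the pathwise inequality $|e^a-e^b|\leq(e^a+e^b)|a-b|$ forces you to control $E|A^{(1)}-A^{(2)}|^2$, whose Lebesgue part requires the \emph{second} moment $E[(\int_0^{T_0}u^2(s,M_s)ds)^2]\leq N\Vert u\Vert_{\mathbb{L}_p^q}^4$. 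That estimate is not stated in the paper, but your mechanism for it is sound: one iteration of the conditional Krylov estimate of Lemma \ref{lemm3.2} (writing the square as $2E\int_0^{T_0}\beta_s E[\int_s^{T_0}\beta_r dr\,|\,\mathcal{F}_s]ds$) yields exactly the $\Vert u\Vert^4$ scaling, and with $u=|b^{(1)}|+|b^{(2)}|$ bounded in $\mathbb{L}_p^q$ by $2K_0$ your Cauchy--Schwarz chain keeps the final bound linear in $\Vert\overline{b}\Vert_{\mathbb{L}_p^q}$, as required. In short: the paper's argument avoids second moments of time integrals by differentiating the exponential martingale, whereas yours avoids the martingale representation at the cost of an (easily supplied) iterated Krylov estimate; this iteration is the one step you should write out explicitly.
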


\begin{proof}
 According to Lemma \ref{Girsanov}, we know that
 $$Ef^{(2)}(X_\cdot^{(2)})=Ef^{(2)}(X_\cdot^{(1)})\overline{\rho}_\infty,$$
 where for $t\geq 0$, $\Delta b(t,X_t^{(1)}):=b^{(2)}(t,X_t^{(1)})-b^{(1)}(t,X_t^{(1)})$ and
 $$\overline{\rho}_\infty:=\exp(\int_0^\infty\Delta b^*(\sigma^*)^{-1}(s,X_s^{(1)})dW_s^{(1)}-\frac{1}{2}\int_0^\infty(\Delta b^*(\sigma\sigma^*)^{-1}\Delta b)(s,X_s^{(1)})ds),$$
 also $E\overline{\rho}_\infty=1 $ by applying \eqref{expx} and the fact that $\Delta b(t,x)=0$ for $t>T_0$ and \eqref{uniell1}.
Hence the left-hand side of \eqref{eq3.5} is less than
$$E|f^{(1)}-f^{(2)}|(X_\cdot^{(1)})\overline{\rho}_\infty+\sup_\mathcal{C}|f^{(1)}| E|\overline{\rho}_\infty-1|=:I_1+I_2\sup_\mathcal{C}|f^{(1)}|.$$
Also we have that all moments of the exponential martingale $$\overline{\rho}_t=\exp(\int_0^t\Delta b^*(\sigma^*)^{-1}(s,X_s^{(1)})dW_s^{(1)}-\frac{1}{2}\int_0^t(\Delta b^*(\sigma\sigma^*)^{-1}\Delta b)(s,X_s^{(1)})ds)$$ are finite by the same argument as getting \eqref{en} in Lemma \ref{lemm3.4}. Hence we get
\begin{align}\label{I1r}
I_1^{3/2}\leq NE|f^{(1)}-f^{(2)}|^{3/2}(X_\cdot^{(1)})
\end{align}
and right hand side of \eqref{I1r} is controled by the first term on the right hand side of  \eqref{eq3.5} by a similar argument as dealing with \eqref{ef} in Lemma \ref{lemm3.4}. To estimate $I_2$, we use It\^o's formula to get for any $T\in[0,\infty)$,
$$\overline{\rho}_T=1+\int_0^T(\Delta b^*(\sigma^*)^{-1})(s,X_s^{(1)})\overline{\rho}_sdW_s^{(1)}.$$
It follows that for any $\beta>1$
\begin{align}\label{ad}
I_2^2\leq &  E|\overline{\rho}_{T_0}-1|^2\leq  E\int_0^{T_0}(\Delta b^*(\sigma\sigma^*)^{-1}\Delta b)(s,X_s^{(1)})\overline{\rho}_s^2ds\nonumber\\
\leq  &N(\int_0^{T_0}E\overline{\rho}_s^{2\beta/(\beta-1)}ds)^{1-1/\beta}(E\int_0^{T_0}|\Delta{b}(s,X_s^{(1)})|^{2\beta}ds)^{1/\beta} \nonumber\\
\leq  &N(\int_0^{T_0}E\overline{\rho}_s^{2\beta/(\beta-1)}ds)^{1-1/\beta}(E\int_0^{T_0}|\overline{b}(s,X_s^{(1)})|^{2\beta}ds)^{1/\beta}.\end{align}
To estimate the second factor of the the right hand of \eqref{ad} we use Lemma \ref{lemm3.4} with $\beta>1$ close to 1 such that $2/q+d/p<1/\beta$. The first factor of the the right hand of \eqref{ad} is controlled by means of $ E \overline{\rho}_{T_0}^{2\beta/(\beta-1)}$. Thus the result follows.
\end{proof}

\subsection{Proof of Theorem \ref{mainadd1}}

\indent Now we are going to prove the maximal local well-posedness result on an arbitrary domain $Q\subset \mathbb{R}_+\times\mathbb{R}^{d}$ by applying the localization technique, which is a modification of the proof of Theorem 1.3 in \cite{Zhang2011}. Furthermore we will prove the continuity of the solution on the domain $Q'=Q\cup \partial$, especially around the boundary $\partial Q'$.

\begin{proof}[Proof of Theorem \ref{mainadd1}]
\indent By Lemma \ref{Urysohn}, for each $n\in \mathbb{N}$, we can find a nonnegative smooth function $\chi_n(t,x)\in[0,1]$ in $\mathbb{R}^{d+1}$  such that $\chi_n(t,x)=1$ for all $(t,x)\in Q^n$ and $\chi_n(t,x)=0$ for all $(t,x)\notin Q^{n+1}$. For any $s,x\in Q$, let
$$b_s^n(t,x):=\chi_n(t+s,x)b(t+s,x)$$
and
$$\sigma_s^n(t,x):=\chi_{n+1}(t+s,x)\sigma(t+s,x)+(1-\chi_n(t+s,x))(1+\sup_{(t+s,x)\in Q^{n+2}}|\sigma(t+s,x)|)\mathbb{I}_{d\times d}.$$
By Lemma \ref{lemm3.1} there exists a unique $(\mathcal{F}_t)$-adapted continuous solution $(X_t^n)_{t\geq0}$ satisfying
\begin{align}\label{xn}
X_t^n=x+\int_0^tb_s^n(r,X_r^n)dr+\int_0^t\sigma_s^n(r,X_r^n)dW_r, \quad \forall t\in [0,\infty), a.s.
\end{align}
More precisely, for condition (i) in Lemma \ref{lemm3.1}, for any $(t,x)\in[0,\infty)\times\mathbb{R}^d$,
$$
|b_s^n(t,x)|\leq|(bI_{Q^{n+2}})(s+t,x)|,
$$
\begin{align*}
|\nabla\sigma_s^n(t,x)|&\leq|(\nabla\chi_{n+1}\sigma)(t+s,x)|+|(\chi_{n+1}\nabla\sigma)(t+s,x)|+c|\nabla\chi_{n}(t+s,x)|\\&\leq |(\nabla\chi_{n+1}\sigma  I_{Q^{n+2}})(t+s,x)|+|(\nabla\sigma I_{Q^{n+2}})(t+s,x)|+c|\nabla\chi_{n}(t+s,x)|,
\end{align*}
which means that we can take $p:=p_{n+2}$, $q:=q_{n+2}$.
The continuity condition in Lemma \ref{lemm3.1} (ii)  obviously holds. Further there exist constants $K(n)$ and $\delta_{n+1}$ such that for all $(t,x)\in \mathbb{R}_+\times\mathbb{R}^{d}$ and $\lambda\in\mathbb{R}^d,$
$$|(\sigma_s^n)^*(t,x)\lambda|^2\leq|(\sigma^* I_{Q^{n+2}}+(1+\sup_{(s+t,x)\in Q^{n+2}}|\sigma^*(s+t,x)|)\mathbb{I}_{d\times d})(s+t,x)\lambda|^2\leq K(n)|\lambda|^2,$$
and
\begin{align*}|(\sigma_s^n)^*(t,x)\lambda|^2&\geq|(\sigma^* I_{Q^{n+1}}+I_{(Q^{n+1})^\mathsf{c}\cap Q^{n+2}}\\&\quad\quad\quad+I_{(Q^{n+2})^\mathsf{c}}(1+\sup_{(s+t,x)\in Q^{n+2}}|\sigma^*(s+t,x)|)I_{d\times d})(s+t,x)\lambda|^2\\
&\geq(\delta_{n+1}\wedge 1)|\lambda|^2.
\end{align*}
Thus equation \eqref{xn} satisfies conditions (i) and (ii) in Lemma \ref{lemm3.1}.
For $n\geq k$, define
$$\tau_{n,k}:=\inf\left\{t\geq0: z_t^n:=(s+t,X_t^n)\notin Q^k\right\},$$
then it is easy to see that $X_t^n$, $X_t^k$, $t\geq0$, satisfy
\begin{align*}
X_{t\wedge\tau_{n,k}}=&x+\int_0^{t\wedge\tau_{n,k}}b_s^k(r,X_r)dr+\int_0^{t\wedge\tau_{n,k}}\sigma_s^k(r,X_r)dW_r,\quad a.s.
\end{align*}
By the local uniqueness of the solution in Lemma \ref{lemm3.1}, we have
\begin{align*}
P\left\{\omega:X_{t}^n(\omega)=X_{t}^k(\omega),\forall t\in[0,\tau_{n,k}(\omega))\right\}=1,
\end{align*}
 which implies $\tau_{k,k}\leq\tau_{n,k}\leq\tau_{n,n}$ $a.s.$. Thus if we take $\xi_{k}:=\tau_{k,k}$, then $\xi_k$ is an increasing sequence of stopping times, and
 \begin{align*}
 P\left\{\omega:X_{t}^n(\omega)=X_{t}^k(\omega),\forall t\in[0,\xi_k(\omega))\right\}=1.
 \end{align*}
 Now for each $k\in\mathbb{N}$, the definitions
  \begin{align*}
  X_t(\omega):=X_t^k(\omega) \text{ for } t<\xi_k,\quad
  \xi:=\lim_{k\rightarrow\infty}\xi_k,
  \end{align*}
  and
  \begin{align}\label{cz}
 z_t=(s+t,X_t),\quad t<\xi,\quad z_t=\partial,\quad \xi\leq t<\infty,
  \end{align}
  make sense almost surely. We may throw the set of $\omega$ where the above definitions do not make sense and work only on the remaining part of $\Omega$.
   Then $(X_t)_{t\geq0}$ satisfies the SDE \eqref{eq1add} and $\xi$ is the related explosion time. \\
   \indent The last thing is to prove that $(z_t)_{t\geq0}$ from \eqref{cz} is continuous on $Q'$. Since $z_t$ coincides with $(t,X_t^n)$ before $\xi_n$, the continuity  of $z_t$ before $\xi_n$ follows from the continuity of $(t,X_t^n)$, which can be obtained by Lemma \ref{lemm3.1}. So we only need to show that $z_t$ is left continuous at $\xi$ $a.s.$. The argument essentially follows from \cite{KR}.
   {  We first show that $(z_t)_{t\geq0}$ has  strong Markov property. We use $P_{s,x}^n$ to denote the distribution of process $(z_t^n)_{t\geq0}=(z_t^n(s,x))_{t\geq0}:=(s+t,X_t^n(0,x))_{t\geq0}$ on $\mathcal{C}([0,\infty),\mathbb{R}^{d+1})$, where $(X_t^n(s,(0,x)))_{t\geq0}$ means the solution $(X_t^n)_{t\geq0}$ defined above with initial point $(0,x)\in \mathbb{R}^{d+1}$.  $E_{s,x}^n$ denotes the expectation corresponding to $P_{s,x}^n$.
   The following argument  is based on Proposition 4.3.3 of \cite{LWMR}.\\
    \indent Define the space $\mathbb{W}_0:=\left\{w\in\mathcal{C}(\mathbb{R}_+,\mathbb{R}^d)|w(0)=0\right\}$ equipped with the supremum norm and Borel $\sigma-$algebra $\mathcal{B}(\mathbb{W}_0)$, the class $\mathcal{E}$ collects all the maps $F:\mathbb{R}^d\times\mathbb{W}_0\rightarrow\mathcal{C}(\mathbb{R}_+,\mathbb{R}^d)$ such that for every probability measure $\mu$ on $(\mathbb{R}^d,\mathcal{B}(\mathbb{R}^d))$ there exists a $\overline{\mathcal{B}(\mathbb{R}^d)\times\mathcal{B}(\mathbb{W}_0)}^{\mu\times P^{W}}$ $/\mathcal{B}(\mathbb{R}^d)$ measurable map $F_\mu:\mathbb{R}^d\times\mathbb{W}_0\rightarrow\mathcal{C}(\mathbb{R}_+,\mathbb{R}^d)$ such that for $\mu-a.e. x\in\mathbb{R}^d$ we have $F(x,w)=F_{\mu}(x,w)$ for $P^W-a.e.$ $w\in\mathbb{W}_0$. Here $\overline{\mathcal{B}(\mathbb{R}^d)\times\mathcal{B}(\mathbb{W}_0)}^{\mu\times P^{W}}$ means the completion of $\mathcal{B}(\mathbb{R}^d)\times\mathcal{B}(\mathbb{W}_0)$ with respect to $\mu\times P^W$, and $P^W$ denotes the distribution of the standard $d$-dimensional Wiener process $(W_t)_{t\geq0}$ on $(\mathbb{W}_0,\mathcal{B}(\mathbb{W}_0))$.  For each $n\in\mathbb{N}$, since we already have the pathwise uniqueness and existence of strong solution $(X_t^n)_{t\geq0}$ to \eqref{xn},  by  applying Theorem E.8 in \cite{LWMR},  we obtain that there exists a map $F\in\mathcal{E}$ such that for $u\leq t$ we have  $X_t^n(s,(0,x))(\omega)=F_{P\circ({X_u^n(s,(0,x))})^{-1}}({X_u^n(s,(0,x))}(\omega),(W_\cdot-W_u)(\omega))(t)$ for $P-a.e.$ $\omega\in\Omega$.
     Then for every bounded measurable function $f$ and all $u,t\in[0,\infty)$ with $u\leq t$ we have for $P-a.e.$ $\omega\in\Omega$
     \begin{align}\label{xnmarkov}
     E[f(X_t^n(s,(0,x)))|\mathcal{F}_u](\omega)&= E[f(F_{P\circ({X_u^n(s,(0,x))})^{-1}}({X_u^n(s,(0,x))}(\omega),W_\cdot-W_u)(t))]\nonumber\\&= E[f(F_{\delta_{X_u^n(s,(0,x))(\omega)}}({X_u^n(s,(0,x))(\omega)},W_\cdot-W_u)(t))]\nonumber\\&=E[f(X_t^n(s,(u,X_u^n(s,(0,x))))(\omega))],
     \end{align}
     which shows the Markov property of the process $(X_t^n)_{t\geq0}$. Here $X_t^n(s,(u,X_u^n(s,(0,x))))$ means the solution $(X_t^n)_{t\geq0}$ to \eqref{xn} with starting point $(u,X_u^n(s,(0,x)))\in\mathbb{R}^{d+1}$. Combining with the Feller property of $(X_t^n)_{t\geq0}$ yielding from the second statement of Lemma \ref{lemm3.1} and well known results about Markov processes (see e.g. \cite[Theorem 16.21]{LB}), we get that $(X_t^n)_{t\geq0}$ is a strong Markov process.
  Now we are going to prove that $(z_t^n)_{t\geq0}$ is a strong Markov process.  Observing that for $u\geq0$, $(\hat W_t)_{t\geq0}:=(W_{t+u}-W_u)_{t\geq0}$ is still a Brownian motion. For any $(s,x)\in Q$, and for any Borel bounded function $f$ on $\mathbb{R}^{d+1}$, 
   by \eqref{xnmarkov}, we have for any $u, t\geq 0$,  $P-a.e.$ 
   \begin{align*}
  X_{t+u}^n&(s,(u,X_u^n(s,(0,x))))\\&=X_u^n(s,(0,x))+\int_{u}^{u+t}\sigma_s^n(r,X_r^n(s,(0,x)))d(W_r-W_u)
  \\&\quad\quad\quad\quad\quad\quad\quad\quad\quad\quad\quad\quad\quad\quad\quad\quad+\int_{u}^{u+t}b_s^n(r,X_r^n(s,(0,x)))dr
  \\&=X_u^n(s,(0,x))+\int_0^t\sigma_{s}^n(r+u,X_{u+r}^n(s,(0,x)))d\hat W_{r}
  \\&\quad\quad\quad\quad\quad\quad\quad\quad\quad\quad\quad\quad\quad\quad\quad\quad+\int_{0}^{t}b_{s}^n(r+u,X_{r+u}^n(s,(0,x)))dr,
  \end{align*}
  and 
  \begin{align*}
  X_t^n&(s+u,(0,X_u^n(s,(0,x))))\\&=X_u^n(s,(0,x))+\int_0^t\sigma_s^n(u+r,X_{r}^n(u+s,(0,X_u^n(s,(0,x)))))d\hat W_{r}
  \\&\quad\quad\quad\quad\quad\quad\quad\quad\quad\quad\quad\quad\quad\quad\quad\quad+\int_{0}^{t}b_s^n(u+r,X_{r}^n(u+s,(0,X_u^n(s,(0,x)))))dr
  \\&=X_u^n(s,(0,x))+\int_0^t\sigma_{s+u}^n(r,X_{r}^n(u+s,(0,X_u^n(s,(0,x)))))d\hat W_{r}
  \\&\quad\quad\quad\quad\quad\quad\quad\quad\quad\quad\quad\quad\quad\quad\quad\quad+\int_{0}^{t}b_{s+u}^n(r,X_{r}^n(u+s,(0,X_u^n(s,(0,x)))))dr.
  \end{align*}
  Since $\sigma_s^n(u+r,\cdot)=\sigma_{s+u}^n(r,\cdot)$, and $b^n_s(u+r,\cdot)=b^n_{s+u}(r,\cdot)$, by the  pathwise uniqueness of the  the following equation
  $$dX_t=\sigma_{s+u}^n(t,X_t)d\hat W_t+b_{s+u}^n(t,X_t)dt,\quad X_0=X_u^n(s,(0,x)),$$  we have for arbitrary Borel bounded function $h$ on $\mathbb{R}^d$, $Eh( X_{t+u}^n(s,(u,X_u^n(s,(0,x)))))=Eh(X_t^n(s+u,(0,X_u^n(s,(0,x)))))$. Hence for $P-a.e.$ $\omega\in\Omega$,
  \begin{align*}
  E[f(z_{t+u}^n(s,x))|\mathcal{F}_u](\omega)&= E[f(s+t+u,X_{t+u}^{n}(s,(0,x)))|\mathcal{F}_u](\omega)\\&=E[f(s+t+u,X_{t+u}^n(s,(u,X_u^n(s,(0,x))))(\omega))]
  \\&=E[f(s+t+u,X_t^n(s+u,(0,X_u^n(s,(0,x)))))(\omega))]
  \\&=E_{z_u^n(s,x)(\omega)}^nf(z_{t}^n).
  \end{align*}
  So $(z_t^n)_{t\geq0}$ is a Markov process. Furthermore, for any $(s,x)\in Q$, by applying Ito's formula to process $X_r^n(s,(0,x))$, we get that $u_s^n(t,x)=Ef(X_t^n(s,(0,x)))$ is the solution to the following equation
    \begin{equation}\label{sn}
     \left\{
     \begin{aligned}
    &D_ru_s^n(r,x)=\frac{1}{2}\sum_{i,j=1}^da_{s,ij}^n(r,x)\partial_{i}\partial_ju_s^n(r,x)+b_s^n(r,x)\cdot\nabla u_s^n(r,x)\text{ on } (0,\infty)\times\mathbb{R}^d, \\
    & u_s^n(0,x) = f(x), \\
    \end{aligned}
    \right.
    \end{equation}
    with  $(a_{s,ij}^n)_{1\leq i,j\leq d}=\sigma_s^n\cdot(\sigma_s^n)^*$,
   and Borel bounded continuous function $f$ on $\mathbb{R}^{d}$. Let $u^n(t,x)$ be the solution to the following equation
     \begin{equation}\label{-s}
     \left\{
     \begin{aligned}
    &D_ru^n(r,x)=\frac{1}{2}\sum_{i,j=1}^da_{ij}^n(r,x)\partial_{i}\partial_ju^n(r,x)+b^n(r,x)\cdot\nabla u^n(r,x)\text{ on } (s,\infty)\times\mathbb{R}^d, \\
    & u^n(s,x) = f(x), \\
    \end{aligned}
    \right.
    \end{equation}
    with  $(a_{ij}^n)_{1\leq i,j\leq d}=\sigma^n\cdot(\sigma^n)^*$, and $\sigma^n$ and $b^n$ are defined as following
      $$b^n(r,x):=b_0^n(r,x),\quad \sigma^n(r,x):=\sigma_0^n(r,x).$$
Then it is easy to see that $u^n(s+t,x)$  also satisfies \eqref{sn}, which by using uniqueness of solution to \eqref{sn} implies $u_s^n(t,x)=u^n(s+t,x)=Ef(X_t^n(s,(0,x)))$. By Remark 10.4 \cite{KR} (or see Theorem 3.1 \cite{XXZZ}), we know that the unique solution $u^n(t,x)$ to the above equation \eqref{-s} is continuous on $(t,x)\in[s,\infty)\times\mathbb{R}^{d}$, which yields the continuity of $Ef(X_t^n(s,(0,x)))$ with respect to $(s,x)\in[0,\infty)\times\mathbb{R}^{d}$ for any $t\in[0,\infty)$.  Then the second statement of Lemma \ref{lemm3.1} and dominated convergence theorem imply that for any Borel bounded continuous function $g$ on $\mathbb{R}^{d+1}$, and for any $(s,x)\in[0,\infty)\times\mathbb{R}^d$
  \begin{align*}
  \lim_{(u,y)\rightarrow(s,x)}E_{u,y}^ng(z_t^n)&=  \lim_{(u,y)\rightarrow(s,x)}Eg(u+t,X^n_t(u,(0,y)))\\&= \lim_{(u,y)\rightarrow(s,x)}\Big(Eg(u+t,X^n_t(u,(0,y)))-Eg(u+t,X^n_t(u,(0,x)))\Big)
  \\&\quad\quad+\lim_{(u,y)\rightarrow(s,x)}\Big(Eg(u+t,X^n_t(u,(0,x)))-Eg(s+t,X^n_t(u,(0,x)))\Big)
  \\&\quad\quad+\lim_{(u,y)\rightarrow(s,x)}Eg(s+t,X^n_t(u,(0,x)))
  \\&\leq \lim_{(u,y)\rightarrow(s,x)}C_t\Vert g(u+t,\cdot)\Vert_\infty|x-y|+ Eg(s+t,X^n_t(s,(0,x)))
  \\&= Eg(t+s,X^n_t(s,(0,x)))=E_{s,x}^ng(z_t^n).
  \end{align*}
  It shows that $(z_t^n)_{t\geq0}$ also has  Feller property, hence $(z_t^n)_{t\geq0}$ is a  strong Markov process. Then for any $(s,x)\in Q$, for any $(\mathcal{F}_t)$-adapted stopping time $\eta$  and for any Borel bounded function $f$ on $\mathbb{R}^{d+1}$,
     \begin{align}\label{mr}
     E_{s,x}f(z_{\eta+t})=f(\partial)+E_{s,x}(f(z_{\eta+t})-f(\partial))I_{\xi>\eta+t}.
     \end{align}
     Since
     \begin{align*}
     E_{s,x}f(z_{\eta+t})I_{\xi>\eta+t}&=\lim_{n\rightarrow\infty}E_{s,x}f(z_{\eta+t})I_{\xi_n\geq\eta+t}\\
     &=\lim_{n\rightarrow\infty}E_{s,x}^nf(z_{\eta+t}^n)I_{\xi_n\geq\eta+t}I_{\xi_n\geq\eta}
     \\&=\lim_{n\rightarrow\infty}E_{s,x}^nf(\eta+t,X_{\eta+t}^n)I_{\xi_n\geq\eta+t}I_{\xi_n\geq\eta},
     \end{align*}
     and $\{\xi_n\geq\eta\}\subset\mathcal{F}_\eta$, by the strong Markov property of $(z_t^n)_{t\geq0}$, we get
     \begin{align*}
     \lim_{n\rightarrow\infty}E_{s,x}^nI_{\xi_n\geq\eta}E_{(\eta,X_\eta^n)}^nf(t,X_{t}^n)I_{\xi_n\geq\eta}=
     &\lim_{n\rightarrow\infty}E_{s,x}^nI_{\xi_n\geq\eta}E_{z_\eta^n}^nf(z_{t}^n)I_{\xi_n\geq\eta}
     \\=&E_{s,x}I_{\xi>\eta}E_{(\eta,X_\eta)}f(t,X_t)I_{\xi>\eta}.
     \\=&E_{s,x}I_{\xi>\eta}E_{z_\eta}f(z_t)I_{\xi>\eta}\end{align*}
     Then \eqref{mr} yields
     \begin{align}\label{me}E_{s,x}f(z_{\eta+t})=E_{s,x}E_{z_\eta}f(z_t).\end{align}
     We can find that \eqref{me} also holds if we replace $(s,x)$ with $\partial$. Hence we get the strong Markov property of the process $(z_t)_{t\geq0}$.\\
}
     \indent In the following we will prove another two auxiliary lemmas in order to show that our solution does not bounce back deep into the interior of $Q$ from near $\partial Q$ too often on any finite interval of time, which is crucial for us to prove the desired continuity.  By shifting the origin in $\mathbb{R}^{d+1}$, without losing generality, we assume $(s,x)=(0,0)$.

 \begin{lemma}\label{ossilaton1}
  For arbitrary $n\geq0$, define $\nu_0=0$,
 \begin{align}\label{bounce}
 \mu_k=\inf\left\{t\geq\nu_k:(t,X_t)\notin Q^{n+1}\right\},\quad \nu_{k+1}=\inf\left\{t\geq\mu_k:(t,X_t)\in {\overline{ Q^n}}\right\}.
 \end{align}
 Then for any $S\in(0,\infty)$ there exists a constant $N$, depending only on $d$, $p$, $q$, $S$, $\Vert bI_{Q^{n+1}}\Vert_{\mathbb{L}_p^q}$, $\sup_{(t,x)\in Q^{n+1}}|\sigma(t,x)|$, and the diameter of $Q^{n+1}$, such that
 \begin{align*}
 \sum_{k=0}^\infty(E|X_{S\wedge\mu_k}-X_{S\wedge\nu_k}|^2)^2\leq N,\quad \sum_{k=0}^\infty(E|S\wedge\mu_k-S\wedge\nu_k|^2)^2\leq S^4.
 \end{align*}
 \end{lemma}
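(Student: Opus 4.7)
The plan is to exploit the SDE on each excursion interval $[\nu_k,\mu_k]$ — on which $(r,X_r)$ must lie in $Q^{n+1}$ by the very definitions of $\nu_k$ (which places the path in $\overline{Q^n}\subset Q^{n+1}$) and $\mu_k$ (the first subsequent exit from $Q^{n+1}$) — combined with the observation that these intervals are pairwise disjoint subsets of $[0,S]$. First I would set $\eta_k:=S\wedge\nu_k$, $\tau_k:=S\wedge\mu_k$, $a_k:=E|X_{\tau_k}-X_{\eta_k}|^2$, and $C:=\sup_{Q^{n+1}}|\sigma|$. Since the drift in the SDE on $[\eta_k,\tau_k)$ agrees with $bI_{Q^{n+1}}$, the inequality $(a+b)^2\leq 2a^2+2b^2$ together with It\^o's isometry would yield
\begin{align*}
a_k\leq 2 E\Big(\int_{\eta_k}^{\tau_k}|bI_{Q^{n+1}}|(r,X_r)\,dr\Big)^2 + 2C^2\, E(\tau_k-\eta_k).
\end{align*}

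The next step is summation over $k$. Writing $I_k$ for the inner integral above and $I:=\int_0^S|bI_{Q^{n+1}}|(r,X_r)\,dr$, the disjointness of the intervals gives $\sum_k I_k\leq I$ pathwise, hence $\sum_k I_k^2\leq I^2$. The key analytic input is Krylov's estimate (Lemma \ref{lemm3.2}) applied to $f:=|bI_{Q^{n+1}}|^2$ with exponents $p/2,\,q/2$ — admissible since $2d/p+4/q<2$ follows from $d/p+2/q<1$; combined with Cauchy--Schwarz this gives
\begin{align*}
EI^2\leq S\cdot E\!\int_0^S|bI_{Q^{n+1}}|^2(r,X_r)\,dr\leq N S\,\|bI_{Q^{n+1}}\|_{\mathbb{L}_p^q}^2.
\end{align*}
Coupled with $\sum_k E(\tau_k-\eta_k)\leq S$ (again by disjointness), this produces an upper bound $\sum_k a_k\leq N_1$ whose constant depends only on the quantities listed in the statement.

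The final upgrade from $\sum_k a_k$ to the required $\sum_k a_k^2$ rides on the trivial uniform bound $a_k\leq R^2$, where $R$ is the diameter of $Q^{n+1}$: this holds because $X_{\eta_k}$ and $X_{\tau_k}$ both lie in the spatial closure of $Q^{n+1}$ by path continuity. Hence $\sum_k a_k^2\leq R^2\sum_k a_k\leq R^2 N_1$, which is the first estimate. The second estimate is easier: with $t_k:=\tau_k-\eta_k\geq 0$, disjointness forces $\sum_k t_k\leq S$ pathwise, so $\sum_k t_k^2\leq S^2$ and $Et_k^2\leq E(\sum_j t_j)^2\leq S^2$ for every $k$; therefore $\sum_k(Et_k^2)^2\leq(\max_k Et_k^2)\sum_k Et_k^2\leq S^2\cdot S^2=S^4$. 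The main technical point is the clean application of Krylov's estimate with the halved exponents $p/2,\,q/2$ — this is where the hypothesis $p,q\in(2,\infty)$ (rather than merely $>1$) is essential; the rest of the argument is bookkeeping built on the disjointness of the intervals.
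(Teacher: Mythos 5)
There is a genuine gap at the step where you invoke Krylov's estimate for the whole path: you apply Lemma \ref{lemm3.2} (with exponents $p/2,q/2$) to bound $E\int_0^S|bI_{Q^{n+1}}|^2(r,X_r)\,dr$, but the process $X$ in Lemma \ref{ossilaton1} is the \emph{maximal local} solution on $Q$, and Lemma \ref{lemm3.2}/\ref{lemm3.4} requires a process solving an SDE whose coefficients satisfy global conditions on $[0,\infty)\times\mathbb{R}^d$ (uniform ellipticity and uniform continuity of $\sigma$, $b$ bounded or in $\mathbb{L}_p^q$). Here $X$ coincides with the globally defined truncated solution $X^{n+1}$ only up to its \emph{first} exit from $Q^{n+1}$; on the later excursions into $Q^{n+1}$ (which are exactly what the lemma is counting) $X$ and $X^{n+1}$ differ, and between excursions $X$ wanders in $Q\setminus Q^{n+1}$ where the coefficients are only locally controlled and the process may even reach $\partial$ before $S$. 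The fact that the integrand vanishes off $Q^{n+1}$ does not rescue the argument: a Krylov estimate controls the occupation measure of a specific well-posed SDE, not of an arbitrary locally defined process passing through $Q^{n+1}$. The paper's proof avoids precisely this by conditioning at each $\nu_k$ and using the strong Markov property of $(z_t)$: after restarting, the path up to the exit time from $Q^{n+1}$ is a fresh run of the SDE with $b$ replaced by $bI_{Q^{n+1}}$ (set to zero outside), to which \eqref{expx} applies, giving a bound on $\bar J_k=E\big(\int_{S\wedge\nu_k}^{S\wedge\mu_k}|b(s,X_s)|^2ds\big)^2$ that is \emph{uniform in} $k$.

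A second, related problem is structural: your route needs the first-moment bound $\sum_k E|X_{S\wedge\mu_k}-X_{S\wedge\nu_k}|^2\leq N_1$ (then upgraded by the diameter bound), which is strictly stronger than the assertion of the lemma. The paper never proves, and its method cannot deliver, summability of the first moments — this is why Lemma \ref{ossilation2} only yields $E\nu^\alpha(S)<\infty$ for $\alpha<1/2$ rather than $E\nu(S)<\infty$; indeed, your $\sum_kEI_k^2\le EI^2$ reduction is exactly equivalent to the unavailable full-path Krylov bound. The correct bookkeeping is the paper's: write the increment as stochastic integral plus drift, bound the stochastic-integral part by $E|S\wedge\mu_k-S\wedge\nu_k|$ via It\^o's isometry and boundedness of $\sigma$ on $Q^{n+1}$, and for the drift part use Cauchy--Schwarz in the form $J_k^2\leq \bar I_k\,\bar J_k$ with $\bar I_k=E|S\wedge\mu_k-S\wedge\nu_k|^2$, so that the uniform-in-$k$ bound on $\bar J_k$ (from the strong Markov restart and \eqref{expx}) is paired with the square-summable time increments $\sum_k\bar I_k\le S^2$. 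Your treatment of the second inequality and the use of the diameter of $Q^{n+1}$ are fine, but the core estimate needs the excursion-by-excursion Markov argument rather than a single Krylov estimate for the whole path.
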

 \begin{proof}
 We have $E|X_{S\wedge\mu_k}-X_{S\wedge\nu_k}|^2\leq 2I_k+2J_k$, where
 \begin{align*}
 I_k:=E|\int_{S\wedge\nu_k}^{S\wedge \mu_k}\sigma(s,X_s)dW_s|^2,\quad J_k:=E|\int_{S\wedge\nu_k}^{S\wedge \mu_k}b(s,X_s)ds|^2.
 \end{align*}
 Observe that on the set $\left\{S\wedge\nu_k<S\wedge\mu_k\right\}$ we have $S\wedge\nu_k=\nu_k$ and $(\nu_k,X_{\nu_k})\in\overline{ Q^n}\subset Q^{n+1}$. Furthermore, $(t,X_t)\in Q^{n+1}$ for $S\wedge\nu_k<t<S\wedge\mu_k,$ and we have
 \begin{align*}
 E|\int_{S\wedge\nu_k}^{S\wedge \mu_k}\sigma(s,X_s)dW_s|^2\leq\sum_{i,j=1}^dE|\int_{S\wedge\nu_k}^{S\wedge \mu_k}\sigma_{ij}^2(s,X_s)ds|\leq Cd^2E|S\wedge \mu_k-S\wedge\nu_k|,\\
 I_k^2\leq Cd^4E|S\wedge \mu_k-S\wedge\nu_k|^2=:Cd^4\bar I_k\leq Cd^4SE|S\wedge \mu_k-S\wedge\nu_k|, \quad\\
 \sum_{k=0}^\infty (E|\int_{S\wedge\nu_k}^{S\wedge \mu_k}\sigma(s,X_s)dW_s|^2)^2\leq Cd^4S^2,\quad\sum_{k=0}^\infty (\bar I_k)^2\leq (\sum_{k=0}^\infty\bar I_k)^2\leq S^4.
 \end{align*}
 Moreover, by H\"older's inequality we have
 \begin{align*}
 J_k\leq E|S\wedge\mu_k-S\wedge\nu_k|\int_{S\wedge\nu_k}^{S\wedge\mu_k}|b(s,X_s)|^2ds, \quad J_k^2\leq \bar I_k\bar J_k,
 \end{align*}
 where
 \begin{align*}
 \bar J_k:=E(\int_{S\wedge\nu_k}^{S\wedge\mu_k}|b(s,X_s)|^2ds)^2.
 \end{align*}
 Let $\tau_n:=\inf\left\{t\geq0:z_t\notin Q^{n}\right\}$. {By the strong Markov property of $(z_t)_{t\geq0}$}, it follows that
 \begin{align}\label{J}
 \bar J_k\leq \sup_{(s,x)\in Q^{n+1}}E_{s,x}(\int_0^{S\wedge\tau_{n+1}}|b(s+t,X_t)|^2dt)^2=\sup_{(s,x)\in Q^{n+1}}E_{s,x}(\int_0^{S}|bI_{Q^{n+1}}(s+t,X_t)|^2dt)^2,
 \end{align}
 Since for $t\leq\tau_{n+1}$, $X_t=X_t^{n+1}$,  we see that the second right part of \eqref{J} will not change if we change arbitrarily $b$ outside of $Q^{n+1}$ only preserving the property that new $b$ belongs to $\mathbb{L}_p^q$. We choose to let $b$ be zero outside of $Q^{n+1}$ and then get the desired estimate from \eqref{expx}. The lemma is proved.\\
 \end{proof}
Following the same argument in \cite[Corollary 4.3]{KR} and use Lemma \ref{ossilaton1} we get the following result. In order to reduce duplicate, the proof is omitted.
 \begin{lemma}\label{ossilation2}
 We say that on the time interval $[\nu_k,\mu_k]$ the trajectory $(t,X_t)_{t\geq0}$ makes a run from $\overline{ Q^n}$ to $({Q^{n+1}})^\mathsf{c}$ provided that $\mu_k<\infty$. Denote by $\nu(S)$ the number of runs which $(t,X_t)_{t\geq0}$ makes from $\overline{ Q^n}$ to $Q^{n+1}$ before time $S$. Then for any $\alpha\in[0,1/2)$, $E\nu^\alpha(S)$ is dominated by a constant $N$, which depends only on $\alpha$, $d$, $p$, $q$, $S$, $\Vert bI_{Q^{n+1}}\Vert_{\mathbb{L}_p^q}$, $\sup_{(t,x)\in Q^{n+1}}|\sigma(t,x)|$, the diameter of $Q^{n+1}$, and the distance between the boundaries of $Q^n$ and $Q^{n+1}$.
 \end{lemma}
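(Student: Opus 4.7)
The plan is to turn the second-moment bound of Lemma \ref{ossilaton1} into a tail bound for the run count $\nu(S)$, and then extract a fractional moment via a standard summation-by-parts argument. First I would record the following geometric observation: if $\mu_k \leq S$ with $k\geq 1$, then $X_{S\wedge\nu_k} = X_{\nu_k} \in \overline{Q^n}$ while $(\mu_k,X_{\mu_k}) \notin Q^{n+1}$, so
$$ |X_{S\wedge\mu_k}-X_{S\wedge\nu_k}| \;\geq\; d_n \;:=\; \mathrm{dist}(\partial Q^n,\partial Q^{n+1}). $$
Since $\{\nu(S)>k\}=\{\mu_k\leq S\}$, squaring and taking expectations gives
$$ E|X_{S\wedge\mu_k}-X_{S\wedge\nu_k}|^2 \;\geq\; d_n^2\,P(\nu(S)>k),\qquad k\geq 1. $$

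Next I would combine this with the summability bound $\sum_{k=0}^\infty\bigl(E|X_{S\wedge\mu_k}-X_{S\wedge\nu_k}|^2\bigr)^2 \leq N$ from Lemma \ref{ossilaton1} to deduce
$$ \sum_{k=1}^\infty P(\nu(S)>k)^2 \;\leq\; \frac{N}{d_n^4}. $$
Then, writing $E\nu(S)^\alpha = \sum_{k=1}^\infty \bigl(k^\alpha-(k-1)^\alpha\bigr)P(\nu(S)\geq k)$ and using $k^\alpha-(k-1)^\alpha \leq C_\alpha k^{\alpha-1}$, Cauchy-Schwarz gives
$$ E\nu(S)^\alpha \;\leq\; C_\alpha \Bigl(\sum_{k=1}^\infty k^{2(\alpha-1)}\Bigr)^{1/2}\Bigl(\sum_{k=1}^\infty P(\nu(S)\geq k)^2\Bigr)^{1/2}. $$
The first factor is finite precisely when $\alpha<1/2$, which is exactly the range claimed, and the resulting constant has the desired dependence since all ingredients ($d_n$ and the constant $N$ from Lemma \ref{ossilaton1}) involve only the quantities listed in the statement.

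I expect the main (minor) obstacle to be pinning down the geometric constant $d_n$ correctly: since $Q^n$ and $Q^{n+1}$ are open subsets of $\mathbb{R}_+\times\mathbb{R}^d$, one must verify that the purely spatial separation $|X_{\mu_k}-X_{\nu_k}|$ is genuinely bounded below by the distance between the parabolic boundaries, rather than being weakened by a time component; this follows because $\nu_k$ and $\mu_k$ can be chosen so that the relevant separation is realized in space. Once this is handled, the remainder is a mechanical consequence of Lemma \ref{ossilaton1} and the Cauchy-Schwarz inequality, so no further technical difficulty arises, which is why the authors omit the proof and refer to \cite[Corollary 4.3]{KR}.
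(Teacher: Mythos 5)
Your summation machinery is fine (the identity $E\nu^\alpha(S)=\sum_{k\geq1}(k^\alpha-(k-1)^\alpha)P(\nu(S)\geq k)$, the bound $k^\alpha-(k-1)^\alpha\leq C_\alpha k^{\alpha-1}$, and Cauchy--Schwarz, which is exactly where the restriction $\alpha<1/2$ enters), but there is a genuine gap at the geometric step. The inequality $|X_{S\wedge\mu_k}-X_{S\wedge\nu_k}|\geq d_n$ on $\{\mu_k\leq S\}$ is not justified and is false for general space-time domains: what the definitions give you is that $(\nu_k,X_{\nu_k})\in\overline{Q^n}$ and $(\mu_k,X_{\mu_k})\notin Q^{n+1}$ are at distance at least $d_n$ \emph{in} $\mathbb{R}^{d+1}$, i.e. $|X_{\mu_k}-X_{\nu_k}|^2+|\mu_k-\nu_k|^2\geq d_n^2$, and this separation can be realized partly or entirely in the time direction (the trajectory may leave $Q^{n+1}$ ``through the top'' with arbitrarily small spatial displacement, since $Q^n,Q^{n+1}$ are arbitrary bounded open subsets of $\mathbb{R}_+\times\mathbb{R}^d$). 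Your remark that ``$\nu_k$ and $\mu_k$ can be chosen so that the separation is realized in space'' does not repair this: they are fixed first entrance/exit times, not quantities at your disposal. This is precisely why Lemma \ref{ossilaton1} supplies a \emph{second} estimate, $\sum_k\big(E|S\wedge\mu_k-S\wedge\nu_k|^2\big)^2\leq S^4$, which your proof never uses.

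The fix is short and keeps your structure. On the event that the $k$-th run is completed before time $S$ one has $|X_{S\wedge\mu_k}-X_{S\wedge\nu_k}|^2+|S\wedge\mu_k-S\wedge\nu_k|^2\geq d_n^2$, hence $d_n^2\,P(\nu(S)>k)\leq a_k+b_k$ with $a_k:=E|X_{S\wedge\mu_k}-X_{S\wedge\nu_k}|^2$ and $b_k:=E|S\wedge\mu_k-S\wedge\nu_k|^2$; therefore $d_n^4\sum_k P(\nu(S)>k)^2\leq 2\sum_k a_k^2+2\sum_k b_k^2\leq 2N+2S^4$ by the two bounds of Lemma \ref{ossilaton1}. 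With this replacement your Cauchy--Schwarz summation goes through verbatim and yields $E\nu^\alpha(S)\leq N$ for all $\alpha\in[0,1/2)$ with the stated dependence of the constant (note $d_n$ is exactly why the distance between the boundaries appears in the statement). This corrected argument is essentially that of \cite[Corollary 4.3]{KR}, which the paper invokes; the only cosmetic difference is that Krylov and R\"ockner extract the pointwise tail bound $P(\nu(S)\geq n)\leq N n^{-1/2}d_n^{-2}$ by summing $a_k+b_k$ over $k<n$, whereas you work with $\sum_k P(\nu(S)>k)^2<\infty$; the two variants are interchangeable.
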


 \indent Now we go back to prove that $z_t$ is left continuous at $\xi$ $a.s.$.  We denote $\nu_k(S)$  the number of runs of $z_t$ from $\overline{Q^k}$ to $({Q^{k+1}})^\mathsf{c}$ before $S\wedge\xi$. For $n>k+1$ obviously, $\nu_k(S\wedge\xi_n)$ is also the number of runs that $(t,X_t^n)_{t\geq0}$ makes from $\overline{ Q^k}$ to $({Q^{k+1}})^\mathsf{c}$ before $S\wedge\xi_n$, since $(t,X_t)$ coincides with $(t,X_t^n)$ before $\xi_n$. $\nu_k(S\wedge\xi_n)$ increase if we increase the time interval to $S$. By Lemma \ref{ossilation2} $E\nu_k^{1/4}(S\wedge\xi_n)$ is bounded by a constant independent of $n$. By Fatou's Lemma $E\nu_k^{1/4}(S\wedge\xi)$ is finite. In particular, on the set $\left\{\omega:\xi(\omega)<\infty\right\}$ $ a.s.$  we have $\nu_k(\xi)<\infty$. The latter also holds on the set $\left\{\omega:\xi(\omega)=\infty\right\}$  because $z_t$ is continuous on $[0,\xi)$ and $Q^k$ is bounded. Thus $\nu_k(\xi)<\infty$ $a.s.$ for any $k$. Since $(\xi^n, X_{\xi^n}^n)\in\partial Q^n$ we conclude that $a.s$. there can exist only finitely many $n$ such that $z_t$ visits $\overline{ Q^k}$ after exiting from $Q^n$. This is the same as to say that $z_t\rightarrow \partial$ as $t\uparrow\xi$ $a.s.$.\\

 \indent About the uniqueness, if there is another continuous $(\mathcal{F}_t)$-adapted $Q'-$valued solution $(z_t')_{t\geq0}=(s+t,X_t')_{t\geq0}$ to the SDE \eqref{eq1add} with explosion time $\xi'$, and for $t<\xi'$ it is $Q-$valued. Then for any $n\geq 1$
 \begin{align}\label{taun}
 \tau^n(X_\cdot'):=\inf\left\{t\geq 0:(s+t,X_t')\notin Q^n\right\}<\xi'
 \end{align}
 and
 \begin{align}\label{barxi}
 \bar \xi:=\lim_{n\rightarrow\infty}\tau^n(X_\cdot')=\xi'\quad a.s..
 \end{align}
 Precisely $\bar\xi\leq \xi'$ by \eqref{taun}. On the other hand, on the set where $\bar \xi<\xi'$, we have $z'_{\bar\xi}\in Q$ since $\bar\xi<\xi'$, we also have $z'_{\bar\xi}=\partial$ since $z'_{\bar\xi}$ is the limit of points getting outside of any $Q^n$.
 Observe that before $\tau^n(X_\cdot')$, $X_t'$ also satisfies the SDE \eqref{xn}, by the local strong uniqueness of equation \eqref{xn} proved by Lemma \ref{lemm3.1}, we get $X_t^n=X_t'$ for $t\leq \tau^n(X_\cdot')$, so $\tau^n(X_\cdot')=\tau_{n,n}$.  And by \eqref{barxi} we see that
 $$\xi'=\bar\xi=\lim_{n\rightarrow\infty}\tau^n(X_\cdot')=\lim_{n\rightarrow\infty}\tau_{n,n}=\xi\quad a.s.,$$
 which implies that for $t\leq \xi=\xi'$, and $z_t'$ coincides with $z_t$ from our above construction \eqref{cz}.
\end{proof}

\section{Preparations of the proof of Theorem \ref{th2.3}}
 \subsection{Probabilistic representation of solutions to parabolic partial differential equations}
 In this subsection, we give a probabilistic representation of the solution to the following backward parabolic partial differential equation with a potential term $V(t,x):[0,T]\times\mathbb{R}^d\rightarrow\mathbb{R}$,
 \begin{equation}\label{kpde}
\left\{
\begin{aligned}
D_tu(t,x)+\mathcal{L}u(t,x)+V(t,x)u(t,x)&=0,\quad 0\leq t\leq T, \\
u(T,x)&=f(x). \\
\end{aligned}
\right.
\end{equation}
Here $T\in(0,\infty)$ and
\begin{align*}
\mathcal{L}u(t,x):=\frac{1}{2}\sum_{i,j=1}^da_{ij}(t,x)\frac{\partial^2u}{\partial x_i\partial x_j}(t,x)+b(t,x)\cdot \nabla u(t,x),\quad u\in\mathcal{C}_c^\infty(\mathbb{R}^{d+1}),
\end{align*}
 where $(a_{ij})_{1\leq i,j\leq d}=\sigma\sigma^*$. We first give the assumptions which make the representation formula hold.
\begin{assumption}\label{assfey}
(i) For all $1\leq i,j\leq d$, $\sigma_{ij}\in \mathcal{C}([0,T]\times\mathbb{R}^d)$,\\
(ii) There exist positive constants $K$ and $\delta$ such that for all $(t,x)\in [0,T]\times{\mathbb{R}^d}$,
$$\delta|\lambda|^2\leq|\sigma^*(t,x)\lambda|^2\leq K|\lambda|^2,\quad \forall \lambda\in\mathbb{R}^d,$$
(iii) $b$, $V\in\mathcal{C}_b([0,T]\times\mathbb{R}^d)$,\\
(iv) For all $(t,x)$, $(s,y)\in [0,T]\times\mathbb{R}^d$, there exists constants $C_1$, $C_2$ and $C_3$ such that
\begin{align*}
|a_{ij}(t,x)-a_{ij}(s,y)|&\leq C_1(|x-y|\vee|t-s|^{1/2}),\\
|b(t,x)-b(s,y)|&\leq C_2(|x-y|\vee|t-s|^{1/2}),\\
|V(t,x)-V(s,y)|&\leq C_3(|x-y|\vee|t-s|^{1/2}).
\end{align*}
(v) $f\in\mathcal{C}_c^2({\mathbb{R}^d})$.
\end{assumption}
\begin{theorem}\label{Feymann}
If Assumption \ref{assfey} holds, then there exists a unique solution $u(t,x)$ to the equation \eqref{kpde} and it can be represented by the following formula
\begin{align}\label{fkf}
u(t,x)=E\Big[f(X(T,t,x))e^{\int_t^TV(u,X(u,t,x))du}\Big],\quad (t,x)\in[0,T]\times\mathbb{R}^d,
\end{align}
where $X(T,t,x)$ is the solution to the SDE \eqref{eq1} with initial point $(t,x)\in[0,T]\times\mathbb{R}^d$.
Furthermore, for $t\in[0,T)$ we have
\begin{align}{\label{L1}}
u(t,\cdot),\quad D_tu(t,\cdot),\quad \nabla u(t,\cdot),\quad \nabla^2u(t,\cdot)\in L^1(\mathbb{R}^d).
\end{align}

\end{theorem}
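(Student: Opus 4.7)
The plan is to first obtain a classical solution via standard parabolic Schauder theory, then derive the Feynman–Kac representation \eqref{fkf} by applying It\^o's formula to an exponentially weighted functional, and finally deduce the $L^1$-integrability claim \eqref{L1} from Gaussian (Aronson) bounds on the fundamental solution together with the compact support of $f$.

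First I would invoke classical parabolic Schauder theory. Under Assumption \ref{assfey}, the operator $D_t+\mathcal{L}+V$ is uniformly parabolic, and $a_{ij},b,V$ are bounded and H\"older continuous on $[0,T]\times\mathbb{R}^d$ (with exponent $1/2$ in $t$ and $1$ in $x$), while the terminal datum $f\in\mathcal{C}_c^2(\mathbb{R}^d)$ is bounded and smooth. By e.g.\ Friedman, \emph{Partial Differential Equations of Parabolic Type}, Ch.~1, the backward Cauchy problem \eqref{kpde} admits a unique bounded classical solution $u\in\mathcal{C}([0,T]\times\mathbb{R}^d)\cap\mathcal{C}^{1,2}([0,T)\times\mathbb{R}^d)$; uniqueness within the class of bounded solutions follows from the parabolic maximum principle.

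Second, I would verify the stochastic representation. Under Assumption \ref{assfey} Lemma \ref{lemm3.1} (in fact, standard Lipschitz theory already suffices here, since $\sigma$ is Lipschitz in $x$ and $b$ is bounded and Lipschitz in $x$) yields a unique strong solution $X_s:=X(s,t,x)$ of \eqref{eq1} on $[t,T]$ with $X_t=x$. Applying It\^o's formula to $Y_s:=u(s,X_s)\exp\bigl(\int_t^s V(r,X_r)\,dr\bigr)$ on $s\in[t,T]$, the $ds$-drift equals $\bigl[D_s u+\mathcal{L}u+Vu\bigr](s,X_s)\cdot\exp(\int_t^s V\,dr)$, which vanishes by \eqref{kpde}, so
\begin{equation*}
dY_s=\exp\!\Bigl(\int_t^s V(r,X_r)\,dr\Bigr)\,\nabla u(s,X_s)^{*}\sigma(s,X_s)\,dW_s.
\end{equation*}
Because $u$, $\nabla u$, $\sigma$ and $V$ are bounded on $[t,T]\times\mathbb{R}^d$, the local martingale $Y$ is in fact a true martingale, and $EY_T=Y_t$ is precisely \eqref{fkf}. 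Uniqueness of the solution then follows a posteriori from the representation.

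Third, and this is where I expect the main obstacle, I would verify \eqref{L1}. Let $\Gamma(t,x;s,y)$ be the fundamental solution of $D_t+\mathcal{L}$, constructed e.g.\ by Friedman's parametrix method under Assumption \ref{assfey}. The classical Aronson-type Gaussian bounds give, for $0\le t<s\le T$ and $|\alpha|\le 2$,
\begin{equation*}
|\partial_x^{\alpha}\Gamma(t,x;s,y)|\le \frac{C}{(s-t)^{(d+|\alpha|)/2}}\exp\!\Bigl(-\frac{c|x-y|^2}{s-t}\Bigr),
\end{equation*}
with an analogous bound for $D_t\Gamma$, for constants $C,c>0$ depending on $T$ and on the constants in Assumption \ref{assfey}. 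Combining this with the bounded Feynman–Kac perturbation $\exp(\int_t^T V)$ (which, by Duhamel, produces a kernel $\Gamma_V$ satisfying the same Gaussian bounds) one writes $u(t,x)=\int_{\mathbb{R}^d}\Gamma_V(t,x;T,y)f(y)\,dy$. Using $f\in\mathcal{C}_c^2$ we obtain, for $t\in[0,T)$,
\begin{equation*}
|\partial_x^{\alpha}u(t,x)|\le \frac{C_T\,\|f\|_\infty}{(T-t)^{(d+|\alpha|)/2}}\int_{\mathrm{supp}\,f}\exp\!\Bigl(-\frac{c|x-y|^2}{T-t}\Bigr)\,dy,
\end{equation*}
and Fubini together with $\int_{\mathbb{R}^d}\exp(-c|x-y|^2/(T-t))\,dx=C(T-t)^{d/2}$ yields $u(t,\cdot),\nabla u(t,\cdot),\nabla^2 u(t,\cdot)\in L^1(\mathbb{R}^d)$. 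The identity $D_tu=-\mathcal{L}u-Vu$ then gives $D_tu(t,\cdot)\in L^1(\mathbb{R}^d)$ as well. The delicate point is that Assumption \ref{assfey}'s precise Lipschitz-in-$x$ and H\"older-$1/2$-in-$t$ regularity is exactly what is needed to invoke these kernel estimates; the It\^o step and the Schauder existence step, by contrast, are essentially standard once the hypotheses are in place.
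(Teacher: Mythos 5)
Your proposal is correct and follows essentially the same route as the paper: classical parabolic theory (the paper cites Lady\v{z}enskaja--Solonnikov--Ural'ceva rather than Friedman) for the unique classical solution and its kernel representation, Gaussian bounds on $\partial_x^s k$ plus Fubini and the compact support of $f$ for \eqref{L1}, the equation itself for $D_tu(t,\cdot)\in L^1$, and a Feynman--Kac identification of the solution (the paper simply cites \cite[Theorem 8.2.1]{BO} where you carry out the It\^o computation by hand). One small caution in your It\^o step: the kernel bounds only give $|\nabla u(s,\cdot)|\le C(T-s)^{-1/2}$, so rather than asserting $\nabla u$ is bounded on all of $[t,T]\times\mathbb{R}^d$ you should stop the argument at $T-\varepsilon$ (where everything is bounded) and let $\varepsilon\downarrow 0$ using the boundedness and continuity of $u$ up to the terminal time.
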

\begin{proof}
On one hand by classical results of partial differential equation (see \cite[Theorem 5.1]{pdeb}), we know that under our assumption there exists a unique solution $u(t,x)\in \mathcal{C}^{1,2}([0,T],\mathbb{R}^d)$ to the equation \eqref{kpde}, which can be written in the form of a potential with kernel $k$  (see \cite[(14.2)]{pdeb}):
\begin{align*}
u(t,x)=\int_{\mathbb{R}^d}k(T,y;t,x)f(y)dy,\quad (t,x)\in[0,T]\times\mathbb{R}^d
\end{align*}
satisfying
\begin{align*}
\lim_{t\rightarrow T}u(t,x)=\lim_{t\rightarrow T}\int_{\mathbb{R}^d}k(T,y;t,x)f(y)dy=f(x),
\end{align*}
and for $s=0,1,2$ there exists a constant $C$ such that for $0\leq t<T$ (see \cite[(13.1)]{pdeb})
\begin{align*}
\partial_x^sk(T,y;t,x)\leq C(T-t)^{-\frac{d+s}{2}}\exp\Big(-C\frac{|y-x|^2}{T-t}\Big).
\end{align*}
Then for $s=0,1,2$, for $t\in[0,T)$ we have
\begin{align*}
\int_{\mathbb{R}^d}|\partial_x^su(t,x)|dx\leq&\int_{\mathbb{R}^d}\int_{\mathbb{R}^d}|f(y)\partial_x^sk(T,y;t,x)|dydx
\\ \overset{Fubini}=&\int_{\mathbb{R}^d}|f(y)|\int_{\mathbb{R}^d}|\partial_x^sk(T,y;t,x)|dxdy
\\ \leq &C(T-t)^{-\frac{s}{2}}\int_{\mathbb{R}^d}|f(y)|dy<\infty,
\end{align*}
which implies that for $t\in[0,T),$
\begin{align}\label{leftpart}
u(t,\cdot),\quad \nabla u(t,\cdot),\quad \nabla^2u(t,\cdot)\in L^1(\mathbb{R}^d).
\end{align}
 Since $b$ is bounded, we get $D_tu(t,\cdot)\in L^1(\mathbb{R}^d)$ following from the equation \eqref{kpde} and \eqref{leftpart}.  On another hand, from our assumption we know that $\sigma\sigma^*$ is uniformly elliptic, $b(t,x)$ and  $\sigma_{ij}(t,x)$, $1\leq i,j\leq d$  are bounded for $(t,x)\in [0,T)\times\mathbb{R}^d$ and continuous in $t$ and Lipschitz continuous in $x$, by a known result (eg. see \cite[IV Theorem 2.2]{Ikeda}) we get the existence and uniqueness of the global solution $(X_t)_{t\geq0}$ to the SDE \eqref{eq1}. Then by \cite[Theorem 8.2.1]{BO} we get that \eqref{fkf} solves the equation \eqref{kpde}. Hence combining these two sides we get the desired result and also \eqref{L1} holds.
\end{proof}

\subsection{Some auxiliary proofs}
 In order to show that under certain conditions our solutions will not blow up, we need some auxiliary proofs which we collect in this subsection. We fix an $T\in (0,\infty)$, for $t\in[0,T]$ define
 \begin{align*}
 Q_{ T}:=(0,T)\times \mathbb{R}^d, \quad B_r:=\{x\in\mathbb{R}^d:|x|< r\},\quad Q^{t,r}:=[0,t)\times B_r.
 \end{align*}
\begin{assumption}\label{ass3}
(i) $\psi$ is a nonnegative function defined on $\mathbb{R}^{d+1}$  and $\psi\in\mathcal{C}_b^\infty(\mathbb{R}^{d+1})$,\\
(ii) $|\nabla\psi|\in\mathbb{L}_{p}^{q,loc}$ with $p,q\in(2,\infty)$ and $d/p+2/q<1$,\\
(iii) $\sigma$ satisfies the conditions in Assumption  \ref{ass3.2} (i),\\
(iv) For all $(t,x)$, $(s,y)\in [0,T]\times\mathbb{R}^d$, there exist constants $K_0$, $K\in[0,\infty)$ such that for all $1\leq i,j\leq d$,
\begin{align*}
|a_{ij}(t,x)-a_{ij}(s,y)|&\leq K(|x-y|\vee|t-s|^{1/2}),\\
|\partial_ja_{ij}(t,x)-\partial_ja_{ij}(s,y)|&\leq K_0(|x-y|\vee|t-s|^{1/2}).
\end{align*}
\end{assumption}
\noindent Let $(W_t)_{ t\geq0}$ be a $d-$dimensional Wiener process on  a given complete probability space $(\Omega,\mathcal{F},$ $(\mathcal{F}_t)_{t\geq0},P)$, denote $(a_{ij})_{1\leq i,j\leq d}=\sigma\sigma^*$. Let $(s,x)\in[0,\infty)\times\mathbb{R}^d$, we introduce the process $(Y(t,s,x))_{t\geq s}$,  satisfying
  \begin{align}\label{Y}
  Y(t,s,x)=x+\int_s^t\sigma(r,Y(r,s,x))dW_r+(\frac{1}{2}\sum_{j=1}^d\int_s^t\partial_ja_{ij}(r,Y(r,s,x))dr)_{1\leq i\leq d},
  \end{align}
  and process $(X(t,s,x))_{t\geq s}$ satisfying
  \begin{align}\label{X}
  X(t,s,x)=x+\int_s^t\sigma(r,X(r,s,x))dW_r+&(\frac{1}{2}\sum_{j=1}^d\int_s^t\partial_ja_{ij}(r,X(r,s,x))dr)_{1\leq i\leq d}
  \nonumber\\-&\int_s^t(\sigma\sigma^*\nabla\psi)(r,X(r,s,x))dr.
  \end{align}
  Since for $1\leq i,j\leq d$, $\partial_ja_{ij}=\sum_{k=1}^d\sigma_{ik}(\partial_j\sigma_{jk})+\sum_{k=1}^d(\partial_j\sigma_{ik})\sigma_{jk}$, and $|\nabla\sigma|\in\mathbb{L}_p^{q,loc}$, from Assumption \ref{ass3} (iii), we get $\sum_{j=1}^d|\partial_ja_{ij}|\in\mathbb{L}_p^{q,loc}$. Then Lemma \ref{lemm3.1} can be applied here to guarantee the existence and uniqueness of global $(\mathcal{F}_t)$-adapted solutions $(Y(t,s,x))_{t\geq s}$ and $(X(t,s,x))_{t\geq s}$ corresponding to SDEs  \eqref{Y} and \eqref{X}  if Assumption \ref{ass3} holds.   \\

\begin{lemma}\label{cor3.8}
Let Assumption \ref{ass3} be satisfied. Take a nonnegative Borel function $f$ on $\mathbb{R}^{d+1}$. For $t\in[0,T]$ introduce
\begin{align*}\beta_T(t,x)=\exp(&-\int_t^{T}\nabla\psi^*\sigma(s,Y(s,t,x))dW_s
-\frac{1}{2}\int_t^{T}|\nabla\psi^*\sigma\sigma^*\nabla\psi|(s,Y(s,t,x))ds
\\&-2\int_t^{T}D_t\psi(s,Y(s,t,x)ds),
\end{align*}
\begin{align*}
v_T(t,x)=E\beta_T(t,x)f(T,Y(T,t,x)),\quad c(t)=\int_{\mathbb{R}^d}e^{-2\psi(t,x)}v_T(t,x)dx.
\end{align*}
Then $c(t)$ is a constant for $t\in[0,T]$.

\end{lemma}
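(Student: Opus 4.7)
The plan is to identify $v_T(t,x)$ as the classical solution of a backward parabolic PDE whose generator has a weighted divergence structure, and then differentiate $c(t)$ and integrate by parts.

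First, I would recast $v_T$ using Girsanov's theorem. Define the exponential
$$\rho_T:=\exp\Bigl(-\int_t^T(\nabla\psi^*\sigma)(s,Y(s,t,x))dW_s-\tfrac{1}{2}\int_t^T|\sigma^*\nabla\psi|^2(s,Y(s,t,x))ds\Bigr).$$
Since $\psi\in\mathcal{C}_b^\infty(\mathbb{R}^{d+1})$, $\nabla\psi$ is bounded and Novikov's condition holds trivially, so $\rho_T$ is a true martingale of all orders and $dQ:=\rho_T\,dP$ is a probability measure. Under $Q$, Girsanov's theorem together with the pathwise uniqueness given by Lemma \ref{lemm3.1} shows that $Y(\cdot,t,x)$ has the same law as $X(\cdot,t,x)$. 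Since $\beta_T=\rho_T\cdot\exp\bigl(-2\int_t^T D_t\psi(s,Y(s,t,x))ds\bigr)$, it follows that
$$v_T(t,x)=E\Bigl[f(T,X(T,t,x))\exp\Bigl(-2\int_t^T D_t\psi(s,X(s,t,x))ds\Bigr)\Bigr].$$

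Next, I would apply the Feynman--Kac formula (Theorem \ref{Feymann}) with potential $V=-2D_t\psi$, which is bounded, to conclude that $v_T$ is the unique classical solution of
$$D_tv_T+\mathcal{L}_X v_T-2D_t\psi\cdot v_T=0,\quad v_T(T,\cdot)=f(T,\cdot),$$
where the generator of $X$ rewrites in weighted divergence form:
$$\mathcal{L}_X u=\tfrac{1}{2}\sum_{i,j}a_{ij}\partial_i\partial_j u+\Bigl(\tfrac{1}{2}\sum_j\partial_j a_{ij}-\sum_j a_{ij}\partial_j\psi\Bigr)\partial_i u=\tfrac{1}{2}e^{2\psi}\,\mathrm{div}\bigl(e^{-2\psi}a\nabla u\bigr).$$
This identity---reflecting that $X$ is symmetric with respect to $e^{-2\psi}dx$---is the decisive algebraic fact. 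Moreover, \eqref{L1} ensures that $v_T(t,\cdot)$, $D_t v_T(t,\cdot)$, $\nabla v_T(t,\cdot)$ and $\nabla^2 v_T(t,\cdot)$ all lie in $L^1(\mathbb{R}^d)$ for every $t<T$.

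Then I would differentiate $c(t)$ under the integral sign, justified by these $L^1$ bounds and the boundedness of $\psi$ and $D_t\psi$, and substitute the PDE:
$$c'(t)=\int_{\mathbb{R}^d}\bigl[-2D_t\psi\cdot e^{-2\psi}v_T+e^{-2\psi}(-\mathcal{L}_X v_T+2D_t\psi\cdot v_T)\bigr]dx=-\int_{\mathbb{R}^d}e^{-2\psi}\mathcal{L}_X v_T\,dx.$$
The two $D_t\psi$ contributions cancel exactly, and the divergence-form identity gives
$$\int_{\mathbb{R}^d}e^{-2\psi}\mathcal{L}_X v_T\,dx=\tfrac{1}{2}\int_{\mathbb{R}^d}\mathrm{div}\bigl(e^{-2\psi}a\nabla v_T\bigr)dx=0$$
by the divergence theorem, since $e^{-2\psi}a\nabla v_T$ is integrable with integrable divergence on $\mathbb{R}^d$. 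Hence $c(t)$ is constant on $[0,T]$.

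The main technical obstacle is the rigorous application of Theorem \ref{Feymann}: its hypothesis (Assumption \ref{assfey}) asks for bounded Lipschitz drift and $f\in\mathcal{C}_c^2$, whereas here the drift of $X$ contains $\tfrac12\sum_j\partial_j a_{ij}$, which by Assumption \ref{ass3}(iv) is Lipschitz but not a priori bounded, and $f$ is only nonnegative Borel. I would handle this by a standard truncation-and-mollification argument---replace $f$ by $f_n\in\mathcal{C}_c^2$ increasing to $f$ and truncate/smooth the drift outside balls of radius $R$, apply Theorem \ref{Feymann} at each stage to get constancy of the approximate $c_n^R(t)$, and pass to the limit as $R,n\to\infty$ using monotone/dominated convergence, the uniform $L^1$ bounds from \eqref{L1}, and the exponential Girsanov estimates of Lemma \ref{lemm3.4} to control the densities $\rho_T$ uniformly.
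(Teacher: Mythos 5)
Your argument is essentially the paper's own proof: reduce to nice $f$, use Girsanov to rewrite $v_T$ as a Feynman--Kac expectation for $X$ with potential $-2D_t\psi$, identify $v_T$ via Theorem \ref{Feymann} as the solution of the Kolmogorov equation, observe that it is equivalent to $D_t(e^{-2\psi}v_T)+\tfrac12\sum_{i,j}\partial_i(e^{-2\psi}a_{ij}\partial_j v_T)=0$, differentiate $c(t)$ under the integral and kill the divergence term by integration by parts using the $L^1$ bounds from \eqref{L1}. The only cosmetic differences are that the paper notes $\tfrac12\sum_j\partial_j a_{ij}$ and $\sigma\sigma^*\nabla\psi$ are already bounded under Assumption \ref{ass3} (so your drift truncation is unnecessary), and it justifies the interchange of $D_t$ and $\int dx$ explicitly through the Gaussian bound on $D_t k(T,y;t,x)$ rather than by the pointwise-in-$t$ $L^1$ statement alone.
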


\begin{proof}
 Using a standard approximation argument it suffices to prove the result for $f\in\mathcal{C}_c^\infty(\mathbb{R}^{d+1})$.
First  by Assumption \ref{ass3} (i) and (iii), we have
\begin{align*}
E\exp(\frac{1}{2}\int_t^{T}|\nabla\psi^*\sigma\sigma^*\nabla\psi|(s,Y(s,t,x))ds)<\infty.
\end{align*}
 Girsanov transformation yields
$$v_T(t,x)=E\exp(-\int_t^T2D_t\psi(s,X(s,t,x))ds)f(T,X(T,t,x)).$$

By Assumption \ref{ass3} (i), (iii) and (iv), we get that $(\frac{1}{2}\sum_{j=1}^d\partial_ja_{ij})_{1\leq i\leq d}$ and $-\sigma\sigma^*\nabla\psi$ are bounded and also satisfy  Assumption \ref{assfey}  (iv). By Theorem \ref{Feymann}, $v_T(t,x)$ is the solution to the following Kolmogrov equation with a potential term $-2D_t\psi$:
 \begin{equation}\label{kolmogrov}
\left\{
\begin{aligned}
D_tv_T(t,x)+\frac{1}{2}\sum_{i,j=1}^d\partial_j(a_{ij}\partial_iv_T(t,x))&-((\sigma\sigma^*\nabla\psi)^*\nabla v_T)(t,x)\\&-v_T(t,x)2D_t\psi(t,x)=0,\quad (t,x)\in[0,T]\times\mathbb{R}^d, \\
v_T(T,x) &= f(T,x). \\
\end{aligned}
\right.
\end{equation}
And Theorem \ref{Feymann} shows that for $t\in[0,T)$, $v_T(t,\cdot)$, $D_tv_T(t,\cdot)$, $\nabla v_T(t,\cdot)$, $\nabla^2v_T(t,\cdot)\in L^1(\mathbb{R}^d)$, also there exists a kernel $k(T,y;t,x)$ such that
$$v_T(t,x)=\int_{\mathbb{R}^d}k(T,y;t,x)f(T,y)dy$$
and there exists a constant $C$ such that (\cite[(13.1)]{pdeb})
\begin{align*}
D_tk(T,y;t,x)\leq C(T-t)^{-\frac{d+2}{2}}\exp\Big(-C\frac{|y-x|^2}{T-t}\Big).
\end{align*}
Then by mean value theorem  for $h\in\mathbb{R}$ with $t+h\in(0,T)$ there exists an $\theta\in(0,1)$ such that
$$\frac{|k(T,y;t+h,x)-k(T,y;t,x)|}{h}= D_tk(T,y;t+\theta h,x)\leq C(T-t-\theta h)^{-\frac{d+2}{2}}\exp\Big(-C\frac{|y-x|^2}{T-t-\theta h}\Big),$$
then
\begin{align}\label{v}
\Big|\frac{v_T(t+h,x)-v_T(t,x)}{h}\Big|&\leq \int_{\mathbb{R}^d}\Big|\frac{k(T,y;t+h,x)-k(T,y;t,x)}{h}\Big|f(T,y)dy
\nonumber\\&\leq C(T-t-\theta h)^{-\frac{d+2}{2}}\int_{\mathbb{R}^d}\exp\Big(-C\frac{|y-x|^2}{T-t-\theta h}\Big)f(T,y)dy
\nonumber\\&\leq C'(T-t)^{-\frac{d+2}{2}}\int_{\mathbb{R}^d}\exp\Big(-C\frac{|y-x|^2}{T-t}\Big)f(T,y)dy
\end{align}
for  $|h|\ll1$.
Denote $g(t,x)=e^{-2\psi(t,x)}v_T(t,x)$,
we have for $t\in[0,T)$, $t+h\in(0,T)$,
\begin{align*}
\Big|\frac{g(t+h,x)-g(t,x)}{h}\Big|&= \Big|\frac{e^{-\psi(t+h,x)}(v_T(t+h,x)-v_T(t,x))}{h}+\frac{v_T(t,x)(e^{-\psi(t+h,x)}-e^{-\psi(t,x)})}{h}\Big|\\
&\leq \Big|\frac{v_T(t+h,x)-v_T(t,x)}{h}\Big|+\Big|\frac{v_T(t,x)(e^{-\psi(t+h,x)}-e^{-\psi(t,x)})}{h}\Big|
\\&\leq C'(T-t)^{-\frac{d+2}{2}}\int_{\mathbb{R}^d}\exp\Big(-C\frac{|y-x|^2}{T-t}\Big)f(T,y)dy+C''v_T(t,x)
\\&=:G_T(t,x),
\end{align*}
the last inequality holds because of \eqref{v} and mean value theorem. Since for for $t\in[0,T)$, $v_T(t,\cdot)\in L^1(\mathbb{R}^d)$ and
\begin{align*}
\int_{\mathbb{R}^d}(T-t)^{-\frac{d+2}{2}}\int_{\mathbb{R}^d}\exp\Big(-C\frac{|y-x|^2}{T-t}\Big)f(T,y)dydx\leq C(T-t)^{-1}\int_{\mathbb{R}^d}f(T,y)dy<\infty,
\end{align*}
it yields that $G_T(t,\cdot)\in L^1(\mathbb{R}^d)$. Then by dominated convergence theorem, we have
$$\lim_{h\rightarrow 0}\frac{\int_{\mathbb{R}^d}g(t+h,x)-g(t,x)dx}{h}=\lim_{h\rightarrow 0}\int_{\mathbb{R}^d}\frac{g(t+h,x)-g(t,x)}{h}dx=\int_{\mathbb{R}^d}D_tg(t,x)dx.$$
That is to say
\begin{align}\label{exchanget}
D_t\int_{\mathbb{R}^d}e^{-2\psi(t,x)}v_T(t,x)dx=\int_{\mathbb{R}^d}D_t(e^{-2\psi}v_T)(t,x)dx.
\end{align}
Besides, we can write the first equation in \eqref{kolmogrov} in an equivalent form as
\begin{align}\label{re}
D_t(e^{-2\psi}v_T)+\frac{1}{2}\sum_{i,j=1}^d\partial_i(e^{-2\psi}a_{ij}\partial_jv_T)=0.
\end{align}

Now we are going to prove
\begin{align*}
\int_{\mathbb{R}^d} div(F)(t,x)dx:=\int_{\mathbb{R}^d}\sum_{i,j=1}^d\partial_i(e^{-2\psi}a_{ij}\partial_jv_T)(t,x)dx=0, \quad t\in[0,T).
\end{align*}
Since $\psi$ is nonnegative, $\partial_i \psi$ and $a_{ij}$ are bounded on $[0,\infty)\times\mathbb{R}^d$ for $1\leq i,j\leq d$, then there exist constants $C_1$ and $C_2$ such that
 \begin{align*}
 F_i=\sum_{j=1}^de^{-2\psi}a_{ij}\partial_jv_T\leq C_1\sum_{j=1}^d|\partial_jv_T|,
  \end{align*}
  and
  \begin{align*}
 div (F)&=\sum_{i,j=1}^d\partial_i(e^{-2\psi}a_{ij}\partial_jv_T)\\
 &=\sum_{i,j=1}^d(-2\partial_i\psi e^{-2\psi}a_{ij}\partial_jv_T
 +\partial_i a_{ij}e^{-2\psi}\partial_jv_T +e^{-2\psi}a_{ij}\partial_i\partial_jv_T)
 \\ &\leq C_2 \sum_{i,j=1}^d(|\partial_j v_T|+|\partial_{i}\partial_{j}v_T|).
\end{align*}
According to \eqref{L1} we know that $F(t,\cdot),div F(t,\cdot)\in L^1(\mathbb{R}^d)$ for any $t\in[0,T)$.
For $n\in\mathbb{N}$, take smooth function $\chi_n$ on $\mathbb{R}^d$ such that $\chi_n(x)=1$ when $|x|\leq n$ and $\chi_n(x)=0$ when $|x|>n+2$. Then by dominated convergence theorem and integration by parts formula for $t\in[0,T)$,
\begin{align*}
\int_{\mathbb{R}^d} div(F)(t,x)dx=\lim_{n\rightarrow\infty}\int_{\mathbb{R}^d} \chi_n(x)div(F)(t,x)dx=-\lim_{n\rightarrow\infty}\int_{\mathbb{R}^d}\nabla \chi_n(x)\cdot F(t,x)dx=0.
\end{align*}
Hence from \eqref{re} and \eqref{exchanget} we get
\begin{align*}
D_t\int_{\mathbb{R}^d}e^{-2\psi(t,x)}v_T(t,x)dx=0.
\end{align*}
This yields that $c(t)$ is a constant for $t\in[0,T)$. Since $c(t)$ is continuous for $t\in[0,T]$, it shows that $c(t)$ is a constant for $t\in[0,T]$.
\end{proof}
Theorem \ref{Feymann} talks about Cauchy problem with terminal data for the equation \eqref{kpde} in the domain $[0,T]\times \mathbb{R}^d$. In the cylindrical domain $Q^{r^2,r}$ with surface $\partial Q^{r^2,r}:=((0,r^2)\times \partial B_r)\cup(\left\{r^2\right\}\times B_r)$ for $r\in(0,1]$, we consider the first boundary problem to the following parabolic equation on $ \overline {Q^{r^2,r}}$ with assuming that $f$ is a continuous function on $\partial Q^{r^2,r}$:
 \begin{equation}\label{QP}
\left\{
\begin{aligned}
\mathcal{L}u(t,x) &=D_tu(t,x)+\frac{1}{2}\sum_{i,j=1}^d\partial_i(a_{ij}(t,x)\partial_ju(t,x))=0 \quad\quad on \quad Q^{r^2,r}, \\
u(t,x) &= f(t,x) \quad\quad on \quad\partial Q^{r^2,r}, \\
\end{aligned}
\right.
\end{equation}
 where $(a_{ij})_{1\leq i,j\leq d}=\sigma\sigma^*$. If Assumption \ref{ass3} (iii) and (iv) hold,
 from \cite[Theorem 3.1]{LOTFI RIAHI} and \cite[Corollary 3.2]{LOTFI RIAHI} the solution $u(t,x)$ to  \eqref{QP} has a representation as following:
$$u(t,x)=\int_{\partial Q^{r^2,r}} f(s,y)p(s,y;t,x)dS(s,y),$$
where $dS$ denotes the surface measure on $\partial Q^{r^2,r}$, and $p(s,y;t,x)$ is the Poisson kernel on $Q^{r^2,r}$ corresponding to  \eqref{QP}, which has the following upper bound estimate on $Q^{r^2,r}$ (\cite{LOTFI RIAHI}) with a constant $c$ independent of $f$
\begin{align}\label{addlemm3.8}
p(s,y;t,x)\leq c(s-t)^{-\frac{(d+1)}{2}}\exp(-c\frac{|y-x|^2}{s-t})
\end{align}
for all $(t,x)\in Q^{r^2,r}$, $(s,y)\in \partial Q^{r^2,r}$, $0\leq t<s$.\\

\indent  Besides, we also can represent the solution to the above equation \eqref{QP} in a probabilistic way. For $(t,x)\in Q^{r^2,r}$, let $$\tau_r:=\inf\left\{s\geq0:(s,Y(s,t,x))\notin Q^{r^2,r}\right\},$$ by applying It\^o's formula to $u(s,Y(s,t,x))$ and taking expectation, we have for $(t,x)\in Q^{r^2,r}$,
$$ u(t,x)=E^{(t,x)}[u(\tau_r,Y({\tau_r},t,x))]-E^{(t,x)}[\int_t^{\tau_r}\mathcal{L}u(s,Y(s,t,x))ds]=E^{(t,x)}[f(\tau_r,Y({\tau_r},t,x))].$$
Hence
$$E^{(t,x)}[f(\tau_r,Y({\tau_r},t,x))]=\int_{\partial Q^{r^2,r}} f(s,y)p(s,y;t,x)dS(s,y).$$
We take $(0,0)$ as the start point of the process $(s,Y(s,t,x))$, then denote $Y_s:=Y(s,0,0)$ and $E[f(\tau_r,Y_{\tau_r})]=\int_{\partial Q^{r^2,r}} f(s,y)p(s,y;0,0)dS(s,y).$

\begin{lemma}\label{lemm3.9}
If Assumption \ref{ass3} (iii) and (iv) hold, then on an extension of the probability space there is a stopping time $\gamma$ such that the distribution of $(\gamma,Y_\gamma)$ has a bounded density concentrated on $Q^{1,1}$.
\end{lemma}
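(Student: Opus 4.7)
The plan is to enlarge the underlying probability space by an independent random time with a carefully tilted density, and to set $\gamma := U\wedge\tau_1$, where $\tau_1:=\inf\{s\ge 0:(s,Y_s)\notin Q^{1,1}\}$ is the first exit time of $(s,Y_s)$ from the cylinder $Q^{1,1}$. Concretely, let $h(u):=\tfrac{d+2}{2}u^{d/2}\mathbf 1_{[0,1]}(u)$, which is a probability density on $[0,1]$, and form $(\widetilde\Omega,\widetilde{\mathcal F},\widetilde P):=(\Omega\times[0,1],\mathcal F\otimes\mathcal B([0,1]),P\otimes h(u)du)$. Set $U(\omega,u):=u$ and enlarge the filtration to $\mathcal G_t:=\mathcal F_t\otimes\mathcal B([0,1])$, so that $U$ is $\mathcal G_0$-measurable (hence trivially a $\mathcal G_t$-stopping time), $\tau_1$ is a $\mathcal G_t$-stopping time, and therefore so is $\gamma=U\wedge\tau_1$.

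The key analytic ingredient is a Gaussian upper bound on the transition density of $Y$. Under Assumption \ref{ass3} (iii)--(iv), the generator of $Y$ is uniformly elliptic in divergence form with bounded Lipschitz coefficients (recall that, thanks to Assumption \ref{ass3}, $\tfrac12\sum_{ij}\partial_j(a_{ij}\partial_i\cdot)$ is exactly the infinitesimal generator of \eqref{Y}), so classical parabolic PDE theory of the type already invoked in Lemma \ref{cor3.8} via \cite[(13.1)]{pdeb} yields a jointly continuous transition density $p(u,0,y)$ satisfying
$$p(u,0,y)\le C\,u^{-d/2}\exp\!\bigl(-c|y|^2/u\bigr),\qquad u\in(0,1],\;y\in\mathbb R^d.$$
Let $p^D(u,0,y)$ denote the Dirichlet heat kernel on $Q^{1,1}$, i.e.\ the density on $B_1$ of $Y_u\mathbf 1_{\{\tau_1>u\}}$; since $p^D\le p$, the same Gaussian bound holds for $p^D$.

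Now take an arbitrary nonnegative bounded Borel $f$ that vanishes on the parabolic boundary of $Q^{1,1}$. Using independence of $U$ and $W$, and noting that $f(\tau_1,Y_{\tau_1})=0$ on $\{U\ge\tau_1\}$ because $(\tau_1,Y_{\tau_1})$ lies on $\partial Q^{1,1}$ almost surely,
$$\widetilde E\,f(\gamma,Y_\gamma)=\widetilde E\bigl[f(U,Y_U)\mathbf 1_{\{U<\tau_1\}}\bigr]=\int_0^1\!\!\int_{B_1}h(u)\,f(u,y)\,p^D(u,0,y)\,dy\,du.$$
Combining the Gaussian bound with the tilt, $h(u)p^D(u,0,y)\le \tfrac{(d+2)C}{2}=:C'$, so $(\gamma,Y_\gamma)$ has on $Q^{1,1}$ the explicit bounded density $\rho(u,y):=h(u)p^D(u,0,y)$, as required.

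The main obstacle is the Gaussian upper bound on $p$ (and hence on $p^D$): this is where the regularity hypotheses of Assumption \ref{ass3}(iii)--(iv) get used, and it rests on the same classical parabolic-PDE estimates already invoked in Lemma \ref{cor3.8}. A secondary, but essential, calibration is the choice of tilt: the exponent $d/2$ in $h(u)=\tfrac{d+2}{2}u^{d/2}$ is dictated exactly by the $u^{-d/2}$ singularity of $p(u,0,\cdot)$ at the apex of $Q^{1,1}$, and any slower decay of $h$ at $u=0$ would leave $\rho$ unbounded near $(u,y)=(0,0)$; any faster decay would needlessly shrink the support of $\gamma$ away from $u=0$ but still produce a valid (possibly sharper) density.
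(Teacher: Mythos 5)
Your construction does not prove the lemma: with $\gamma=U\wedge\tau_1$ the law of $(\gamma,Y_\gamma)$ is \emph{not} absolutely continuous on $Q^{1,1}$. On the event $\{U\geq\tau_1,\ \tau_1<1\}$, which has positive probability (the diffusion is nondegenerate, so it exits through the lateral boundary $(0,1)\times\partial B_1$ before time $1$ with positive probability, and $U$ is independent with full support in $[0,1]$), you have $(\gamma,Y_\gamma)=(\tau_1,Y_{\tau_1})\in(0,1)\times\partial B_1$, a Lebesgue-null set in $\mathbb{R}^{d+1}$. Hence the distribution of $(\gamma,Y_\gamma)$ carries a singular component of positive mass on the lateral boundary, and no bounded density exists. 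Your verification hides this by testing only against $f$ vanishing on the parabolic boundary of $Q^{1,1}$; such $f$ do not determine the full law, and the restriction is not harmless: in the application (the estimate of $I_2$ in Lemma \ref{lemm3.10}) the lemma is applied to $u_t(\cdot,\cdot)$, which does not vanish on $\partial Q^{1,1}$, so absolute continuity of the whole law is exactly what is needed. The interior piece of your computation (the bound $h(u)p^D(u,0,y)\leq C'$ via the Gaussian upper bound for the divergence-form kernel, which is legitimate under Assumption \ref{ass3} (iii)--(iv)) is fine, but it only controls the absolutely continuous part.

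The missing idea, which is how the paper proceeds, is to randomize the \emph{spatial} size of the cylinder rather than (only) the time: take $\rho$ independent of $(\mathcal{F}_t)$ with density $h(r)=nr^{n-1}$ on $[0,1]$, $n=d+3$, and let $\gamma$ be the exit time of $(t,Y_t)$ from $Q^{\rho^2,\rho}$. For each fixed $r$ the exit distribution is a surface measure on $\partial Q^{r^2,r}$ given by the Poisson kernel, with the Gaussian-type upper bound of Riahi; averaging these surface measures over the random radius $r$ against $h(r)\,dr$ smears them into a measure absolutely continuous with respect to Lebesgue measure on $Q^{1,1}$, and the choice $n=d+3$ compensates the $r^{-d-1}$ singularity of the kernel so the resulting density is bounded. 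Your time-only randomization removes the singularity of the time marginal at the top of the cylinder but cannot remove the lateral-exit surface measure; some device of this coarea type (random radius, or an equivalent smearing of the exit surface) is indispensable.
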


\begin{proof}
Let $n=d+3$. On an extension of our probability space there exists a random variable $\rho$ with values in $[0,1]$ and density function $h(r)=nr^{n-1}$ such that $\rho$ is independent of all $({\mathcal{F}}_t)_{t\geq0}$. Then $\rho$ is also independent to $(t,Y_t)_{t\geq0}$, since $(Y_t)_{t\geq0}$ is adapted to $(\mathcal{F}_t)_{t\geq0}$. Let $\hat{\mathcal{F}}_t=\mathcal{F}_t\vee\sigma(\rho),t\geq0$, and define $\gamma$ as the first exit time of $(t,Y_t)_{t\geq0}$ from $Q^{\rho^2,\rho}$. Then $\gamma$ is a bounded $(\hat{\mathcal{F}}_t)_{t\geq0}$ stopping time.  We claim that $\gamma$ is a random variable of the type that we are looking for.\\
 \indent Actually, according to independence and the above potential knowledge, for a nonnegative continuous function $f(t,x)$ on $[0,\infty)\times \mathbb{R}^d$ we have
  \begin{align*} Ef(\gamma, Y_\gamma)=&E[Ef(\tau_r,Y_{\tau_r})\bigg|_{\rho=r}]=E[\int_{\partial Q^{r^2,r}}f(s,y)p(s,y;0,0)dS(s,y)\bigg|_{\rho=r}]\\=&\int_0^1h(r)dr\int_{\partial Q^{r^2,r}}f(s,y)p(s,y;0,0)dS(s,y)\\=&
  \int_0^1 h(r)dr\int_{(0,r^2)\times \partial B_r}f(s,y)p(s,y;0,0)dS(s,y)\\&+\int_0^1h(r)dr\int_{B_r}f(r^2,y)p(r^2,y;0,0)dy=:I_1+I_2.\end{align*}
  Then \eqref{addlemm3.8} and the fact that $\exp(-c\frac{|y|^2}{s})s^{-(d+1)/2}$ is bounded by $Nr^{-d-1}$ on $(0,r^2)\times\partial B_r$ yield
  \begin{align*} I_1&\leq k\int_0^1h(r)dr\int_0^{r^2}\int_{\partial B_r}f(s,y)\frac{\exp(-c\frac{|y|^2}{s})}{s^{(d+1)/2}}dS(s,y)
  \\&\leq N\int_0^1h(r)r^{-d-1}dr\int_0^{r^2}\int_{\partial B_r}f(s,y)dS(s,y)
  \\&
  \leq N\int_0^1\int_0^{r^2}\int_{\partial B_1}r^{-d-1}f(s,ry)h(r)r^{d-1}d(\partial B_1)dsdr\\&
  \leq N\int_0^1\int_0^1\int_{\partial B_1}f(s,ry)r^{d}d(\partial B_1)dsdr\leq N\int_{Q^{1,1}}f(s,y)dsdy,\end{align*}
and
  \begin{align*} I_2&\leq k\int_0^1\int_{B_r}f(r^2,y)h(r)\frac{\exp(-c\frac{|y|^2}{r})}{r^{d+1}}dydr\\&\leq N\int_0^1\int_{B_r}f(r^2,y)h(r)r^{-d-1}dydr\\&
  =N\int_0^1\int_{B_r}f(r^2,y)r^{n-2-d}dydr\leq N\int_{Q^{1,1}}f(s,y)dsdy.\end{align*}
  Hence
  $$Ef(\gamma,Y_\gamma)\leq N\int_{Q^{1,1}}f(t,x)dxdt$$
and N is independent of $f$. \\
\indent For arbitrary nonnegative function $|fI_{Q^{1,1}}|\in \mathbb{L}_1^1$, we can use a standard method to approximate $f$ via continuous functions. The conclusion is proved.
\end{proof}

\begin{lemma}\label{lemm3.10}
Let Assumption \ref{ass3} hold. Let $K_2\in[0,\infty)$ be a constant. Assume that for some $p$, $q$ satisfying $p,q\in(2,\infty)$ and $ \frac{d}{p}+\frac{2}{q}<1,$ we have
$$ \quad\psi I_{Q^{1,1}}\leq K_2, \quad \Vert\nabla\psi I_{Q^{1,1}}\Vert_{\mathbb{L}_p^q}\leq K_2.\quad$$
 Take an $r\in(1,\infty)$ and a nonnegative Borel function $f=f(t,x)$ on $(0,\infty)\times\mathbb{R}^d$ such that $f(t,x)=0$ for $t>T$. For $0\leq s\leq t\leq T$ and $x\in\mathbb{R}^d$ introduce
\begin{align*}
\rho_t(s,x)&=\exp(-\int_s^{t}\nabla\psi^*\sigma(u,Y(u,s,x))dW_s-\frac{1}{2}\int_s^{t}|\nabla\psi^*\sigma\sigma^*\nabla\psi|(u,Y(u,s,x))du),\\
\alpha_t(s,x)&=\exp(-2\int_s^{t}(D_t\psi)_+(u,Y(u,s,x))du),\\
u_t(s,x)&=E\rho_t(s,x)\alpha_t(s,x)f(t,Y(t,s,x)).
\end{align*}
Then there is a constant $N$, depending only on  $K$, $r$, $p$, $q$, $K_2$ and $T$, such that
\begin{equation}\label{eq3.8}
\int_0^Tu_t(0,0)dt\leq N(\int_{(0,\infty)\times\mathbb{R}^d}f^re^{-2\psi}dtdx)^{1/r}+N(\int_{Q^{1,1}}f^{d+3}dtdx)^{1/(d+3)}.
\end{equation}
\end{lemma}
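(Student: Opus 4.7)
The plan is to split $\int_0^T u_t(0,0)\,dt$ into a local piece (before the auxiliary stopping time $\gamma$ from Lemma~\ref{lemm3.9}) and a global piece (after $\gamma$), and to estimate these by distinct mechanisms. By Fubini,
\[
\int_0^T u_t(0,0)\,dt = E\int_0^{\gamma\wedge T}\rho_t\alpha_t f(t,Y_t)\,dt + E\int_{\gamma\wedge T}^T\rho_t\alpha_t f(t,Y_t)\,dt =: I_1+I_2.
\]

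For $I_1$, on $\{t<\gamma\}$ the trajectory $(t,Y_t)$ lies in $Q^{\rho^2,\rho}\subset Q^{1,1}$, so $f(t,Y_t)=(fI_{Q^{1,1}})(t,Y_t)$. Using $\alpha_t\leq 1$ and separating $\rho_t$ from $f$ by Cauchy--Schwarz, the moment $E\rho_t^2$ on $[0,1]$ is controlled exactly as in the exponential-martingale estimate in the proof of Lemma~\ref{lemm3.4} (with $\nabla\psi^*\sigma$ in the role of $b^*(\sigma^*)^{-1}$), while Krylov's estimate (Lemma~\ref{lemm3.4}, applied to $Y$ after truncating its drift $\tfrac12\sum_j\partial_j a_{ij}$ in time) applied to $(fI_{Q^{1,1}})^2$ with exponents $p'=q'=(d+3)/2$ --- which satisfy $p',q'>1$ and $d/p'+2/q'=(2d+4)/(d+3)<2$ --- gives
\[
E\int_0^{1\wedge T}(fI_{Q^{1,1}})^2(t,Y_t)\,dt\leq N\Bigl(\int_{Q^{1,1}}f^{d+3}\Bigr)^{2/(d+3)}.
\]
Taking square roots produces the second term of \eqref{eq3.8}.

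For $I_2$ the plan is to reduce to $u_t$ evaluated at the random starting point $(\gamma,Y_\gamma)$ and to invoke Lemma~\ref{cor3.8}. By the strong Markov property of $Y$ at $\gamma$ together with the multiplicativity of the stochastic exponentials,
\[
\rho_t(0,0)\alpha_t(0,0)f(t,Y_t)=\rho_\gamma(0,0)\alpha_\gamma(0,0)\cdot\rho_t(\gamma,Y_\gamma)\alpha_t(\gamma,Y_\gamma)f(t,Y(t,\gamma,Y_\gamma))
\]
for $t\geq\gamma$, so conditioning on $\mathcal{F}_\gamma$ and Fubini yield $I_2=E[\rho_\gamma\alpha_\gamma\int_\gamma^T u_t(\gamma,Y_\gamma)\,dt]$. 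Bounding $\alpha_\gamma\leq 1$, then applying H\"older with exponents $(r,r')$ in $\omega$ and in $t$, and Lemma~\ref{lemm3.9} to the nonnegative function $(s,y)\mapsto u_t^r(s,y)I_{s\leq t}$ for each fixed $t$, I would obtain
\[
I_2\leq N\Bigl(\int_0^T\int_{Q^{1,1}}u_t^r(s,y)I_{s\leq t}\,ds\,dy\,dt\Bigr)^{1/r}.
\]
The key pointwise bound $u_t(s,y)^r\leq v_t^{(r)}(s,y):=E[\beta_t(s,y)f^r(t,Y(t,s,y))]$ then follows from H\"older applied to $u_t=E[\rho_t\alpha_t f]$, using $E\rho_t\leq 1$ and the domination $\rho_t\alpha_t\leq\beta_t$ (valid because $(D_t\psi)_+\geq D_t\psi$). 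Lemma~\ref{cor3.8}, applied with terminal time $t$ and terminal data $f^r(t,\cdot)$, then yields $\int e^{-2\psi(s,y)}v_t^{(r)}(s,y)\,dy=\int e^{-2\psi(t,y)}f^r(t,y)\,dy$ for every $s\in[0,t]$; inserting $e^{-2\psi}\geq e^{-2K_2}$ on $Q^{1,1}$, integrating in $s\in[0,1\wedge t]$ and then in $t\in[0,T]$, gives $I_2\leq N(\int f^r e^{-2\psi})^{1/r}$.

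The main technical obstacle is precisely this marriage of the strong Markov step with the conservation law of Lemma~\ref{cor3.8}: one must pass from the one-sided weight $\alpha_t$ built from $(D_t\psi)_+$ to the signed weight inside $\beta_t$ (which is exactly what $\rho_t\alpha_t\leq\beta_t$ provides), and one must do so while preserving the $r$-th power, which forces the appearance of $v_t^{(r)}$ with terminal data $f^r$ in place of $v_t$ itself. The remaining pieces --- finite exponential moments of $\rho_\gamma$ and $\rho_t$ on $[0,1]$, and the applicability of Krylov's estimate to $Y$ under Assumption~\ref{ass3} --- are standard.
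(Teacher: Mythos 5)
Your proposal is correct and follows essentially the same route as the paper: split at the stopping time $\gamma$ of Lemma \ref{lemm3.9} via the strong Markov property, handle the pre-$\gamma$ piece by H\"older plus Krylov's estimate (Lemma \ref{lemm3.4}) and the exponential-moment bounds for $\rho$ localized to $Q^{1,1}$, and handle the post-$\gamma$ piece by the bounded density of $(\gamma,Y_\gamma)$, the pointwise bound $u_t^r\leq E[\beta_t f^r]$ (via $E\rho_t\alpha_t\leq1$ and $\rho_t\alpha_t\leq\beta_t$), and the conservation law of Lemma \ref{cor3.8} together with $\psi\leq K_2$ on $Q^{1,1}$. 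The only deviation is your choice of exponents for $I_1$ ($v=2$ with $p'=q'=(d+3)/2$ instead of the paper's $v(d+5/2)=d+3$), which is immaterial.
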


\begin{proof}
By the strong Markov property of $(Y_t)_{t\geq0}$, { which can be obtained  from the similar argument as in the proof of Theorem \ref{mainadd1}}, for any stopping time $\tau$ we have
$$EI_{\tau\leq t}\rho_t(0,0)\alpha_t(0,0)f(t,Y_t)=EI_{\tau\leq t}\rho_\tau(0,0)\alpha_\tau(0,0)u_t(\tau,Y_\tau).$$
Therefore, upon assuming without losing generality that $T\geq1$, for $\gamma$ from Lemma \ref{lemm3.9},
$$\int_0^Tu_t(0,0)dt=E\int_0^\gamma\rho_t(0,0)\alpha_t(0,0)f(t,Y_t)dt+E\rho_\gamma(0,0)\alpha_\gamma(0,0)\int_\gamma^Tu_t(\gamma,Y_\gamma)dt=:I_1+I_2.$$
Observe that $\alpha_t\leq1$ and for $t\leq\gamma$ we have $(t,Y_t)\in Q^{1,1}$ so that, in particular, in the formula defining $\rho_t(0,0)$ we can replace $\nabla\psi$ with $\nabla\psi I_{Q^{1,1}}$ and hence all moments of $\rho_t(0,0)I_{t\leq\gamma}$ and $\rho_\gamma(0,0)$ are finite and uniformly bounded in $t$. Since by  \eqref{eq3.42} we have
$$E[\exp(\frac{1}{2}\int_0^t|\nabla\psi^*\sigma\sigma^*\nabla\psi|I_{Q^{1,1}}(u,Y(u,s,x))du)]<\infty$$
for all $t\in[0,T]$. For the moments of $\rho_t(0,0)I_{t\leq\gamma}$ and $\rho_\gamma(0,0)$, by using the same way of getting \eqref{exponential} and \eqref{en} we get the desired results.  We also can replace $\frac{1}{2}\sum_{j=1}^d\int_s^{t}\partial_ja_{ij}(r,Y(r,s,x))dr$ by $\frac{1}{2}\sum_{j=1}^d\int_s^{t}I_{Q^{1,1}}\partial_ja_{ij}(r,Y(r,s,x))dr)$ in the SDE \eqref{Y} for all $1\leq i,j\leq d$, it follows by H\"older's inequality and \eqref{expx} that for any $v\in(1,\infty)$
$$I_1\leq N(E\int_0^T|f^vI_{Q^{1,1}}|(t,Y_t)dt)^{1/v}\leq N\Vert f^vI_{Q^{1,1}}\Vert_{\mathbb{L}^{d+5/2}}^{1/v}.$$
We can choose $v$ so that $v(d+5/2)=d+3$, and get that $I_1$ is less than the second term on the right in \eqref{eq3.8}.\\
\indent In what concerns $I_2$ we again use $\alpha_\gamma(0,0)\leq1$ and the finiteness of all moments of $\rho_\gamma(0,0)$. Then we find
\begin{equation}\label{newlem3.10}
I_2\leq N(\int_0^1\int_s^T(\int_{B_1}u_t^r(s,x)dx)dtds)^{1/r}.
\end{equation}
To estimate the interior integral with respect to $x$ we insert there $\exp(-2\psi(s,x))$ and again use H\"older's inequality and the fact that $E\rho_t(s,x)\leq1$. This yields
$$I_2(s,t):=\int_{B_1}u_t^r(s,x)dx\leq e^{2K_2}\int_{\mathbb{R}^d}e^{-2\psi(s,x)}\hat{v}_t(s,x)dx$$
where
$$\hat{v}_t(s,x)=E\rho_t(s,x)\alpha_t(s,x)f^r(t,Y({t},s,x))\leq E\beta_t(s,x)f^r(t,Y({t},s,x)).$$
Hence by Lemma \ref{cor3.8},
$$I_2(s,t)\leq e^{2K_2}\int_{\mathbb{R}^d}e^{-2\psi(t,x)}f^r(t,x)dx,$$
 which shows that $I_2$ is less than the first term on the right in \eqref{eq3.8}. The Lemma is proved.
\end{proof}

\begin{lemma}\label{lemm3.11}
 Let the assumptions of Lemma \ref{lemm3.10} be satisfied and let $\epsilon\in[0,2)$ be a constant and $h$ a nonnegative Borel function on bounded domain $Q\subset[0,\infty)\times\mathbb{R}^{d}$ such that on $Q$,
\begin{equation}\label{eq3.9}
2D_t\psi+\sum_{i,j=1}^{d}\partial_j(a_{ij}\partial_i\psi)\leq he^{\epsilon\psi}.
\end{equation}
Then for any $\delta\in[0,2-\epsilon)$, $r\in(1,2/(\delta+\epsilon)]$,  there exists a constant $N$, depending only on $K$, $T$, $p$, $q$, $K_2$, $\epsilon$, $\delta$ and r (but not $Q$) such that for any stopping time $\tau\leq\tau_Q(Y_\cdot)$ we have
\begin{equation}\label{eq3.10}
E\Phi_\tau\leq N +N(\int_Qh^re^{-(2-r\eta)\psi}dtdx)^{1/r}+N\sup_{Q^{1,1}}h,
\end{equation}
where $\eta=\delta+\epsilon$ so that $r\eta\leq 2$ and
\begin{align*}\Phi_t:=\exp(&-\int_0^t(\nabla\psi^*\sigma)(s,Y_s)dW_s-\frac{1}{2}\int_0^t|\nabla\psi^*\sigma\sigma^*\nabla\psi|(s,Y_s)ds
\\&-2\int_0^t(D_t\psi)_+(s,Y_s)ds+\delta\psi(t,Y_t)).\end{align*}
\end{lemma}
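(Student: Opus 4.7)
My plan is to reduce the estimate to a direct application of Lemma~\ref{lemm3.10}. The key observation is that, up to a factor of $e^{\delta\psi(t,Y_t)}$, $\Phi_t$ is exactly the weight $\rho_t(0,0)\alpha_t(0,0)$ that appears in Lemma~\ref{lemm3.10}. So I will first use It\^o's formula to produce a differential inequality for $\Phi_t$ in which the drift is dominated by a multiple of $\Phi_t\, h(t,Y_t)\,e^{\epsilon\psi(t,Y_t)}$, then recognize that the resulting time integral is in the exact form handled by Lemma~\ref{lemm3.10}, applied with $f(t,x)=h(t,x)e^{\eta\psi(t,x)}I_Q(t,x)$ where $\eta=\delta+\epsilon$.

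For step one, I write $\Phi_t=\exp(U_t)$ and compute $dU_t$ using the dynamics \eqref{Y} of $Y_t$: since $d\psi(t,Y_t)=D_t\psi\,dt+\sigma^*\nabla\psi\cdot dW+\tfrac12\sum_{i,j}\partial_j(a_{ij}\partial_i\psi)\,dt$, the martingale parts of $dU_t$ combine to $(\delta-1)\sigma^*\nabla\psi\cdot dW$ and the drift to
\[
\tfrac{\delta(\delta-2)}{2}|\sigma^*\nabla\psi|^2-2(D_t\psi)_+ +\tfrac{\delta}{2}\bigl(2D_t\psi+\sum_{i,j}\partial_j(a_{ij}\partial_i\psi)\bigr),
\]
after adding the It\^o correction $\tfrac12(\delta-1)^2|\sigma^*\nabla\psi|^2$ from $d\Phi_t=\Phi_t\,dU_t+\tfrac12\Phi_t\,d\langle U\rangle_t$. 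Invoking the Lyapunov inequality \eqref{eq3.9} in the last bracket, and noting that $\delta\in[0,2)$ forces $\delta(\delta-2)/2\le 0$ while $-2(D_t\psi)_+\le 0$, I obtain
\[
d\Phi_t\le\Phi_t(\delta-1)\sigma^*\nabla\psi\cdot dW_t+\tfrac{\delta}{2}\Phi_t\,h(t,Y_t)e^{\epsilon\psi(t,Y_t)}\,dt.
\]
After a standard localization of the martingale part (which is integrable because $\psi\in\mathcal{C}_b^\infty(\mathbb{R}^{d+1})$, so $\nabla\psi^*\sigma$ is bounded, and $\Phi_t$ has all moments by Novikov together with the boundedness of $\psi$), taking expectations up to $\tau$ and noting $\Phi_0=e^{\delta\psi(0,0)}\le e^{\delta K_2}$ yields
\[
E\Phi_\tau\le e^{\delta K_2}+\tfrac{\delta}{2}\,E\!\int_0^\tau\Phi_s\,h(s,Y_s)e^{\epsilon\psi(s,Y_s)}\,ds.
\]

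For step two, I factor the integrand as $\Phi_s\,he^{\epsilon\psi}=\rho_s(0,0)\,\alpha_s(0,0)\cdot h(s,Y_s)e^{\eta\psi(s,Y_s)}$ with $\eta=\delta+\epsilon$. Since $\tau\le\tau_Q(Y_\cdot)$, on $\{s\le\tau\}$ the point $(s,Y_s)$ lies in $Q$, so replacing the integrand by $\rho_s\alpha_s f(s,Y_s)$ with $f(s,x):=h(s,x)e^{\eta\psi(s,x)}I_Q(s,x)$ only enlarges the expectation; choose $T$ so that $Q\subset[0,T]\times\mathbb{R}^d$, which is possible because $Q$ is bounded, so that $f(t,x)=0$ for $t>T$. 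Lemma~\ref{lemm3.10} then gives
\[
E\!\int_0^T\rho_s\alpha_s f(s,Y_s)ds\le N\Bigl(\int f^re^{-2\psi}dtdx\Bigr)^{1/r}+N\Bigl(\int_{Q^{1,1}}f^{d+3}dtdx\Bigr)^{1/(d+3)}.
\]
The first term equals $N\bigl(\int_Q h^r e^{-(2-r\eta)\psi}dtdx\bigr)^{1/r}$ by construction (here the hypothesis $r\eta\le 2$ is what guarantees the exponent $2-r\eta\ge 0$ and keeps the weight finite). For the second term, $\psi\le K_2$ and $Q^{1,1}$ has finite Lebesgue measure, so $\bigl(\int_{Q^{1,1}}h^{d+3}e^{(d+3)\eta\psi}I_Q\bigr)^{1/(d+3)}\le e^{\eta K_2}|Q^{1,1}|^{1/(d+3)}\sup_{Q^{1,1}}h$, giving exactly the term $N\sup_{Q^{1,1}}h$ claimed in \eqref{eq3.10}. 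Combining the pieces yields the stated bound.

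The main technical obstacle is the It\^o calculation and the precise cancellation: the coefficient $-1$ in $-\int_0^t\nabla\psi^*\sigma dW$ together with the Girsanov-type quadratic-variation correction must combine with the $\delta\psi(t,Y_t)$ term so that (a) the resulting $|\sigma^*\nabla\psi|^2$ coefficient has the right sign, and (b) the mixed $D_t\psi$ and divergence-form terms reassemble to $\tfrac{\delta}{2}(2D_t\psi+\sum\partial_j(a_{ij}\partial_i\psi))$, which is precisely the quantity controlled by the Lyapunov hypothesis \eqref{eq3.9}. Everything else is bookkeeping plus matching the output of Lemma~\ref{lemm3.10} to the two summands on the right-hand side of \eqref{eq3.10}.
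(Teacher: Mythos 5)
Your proof is correct and follows essentially the same route as the paper: It\^o's formula applied to $\Phi_t=\rho_t(0,0)\alpha_t(0,0)e^{\delta\psi(t,Y_t)}$, the Lyapunov bound \eqref{eq3.9} together with the non-positivity of $-2(D_t\psi)_+$ and $\tfrac{\delta(\delta-2)}{2}|\sigma^*\nabla\psi|^2$, localization to drop the local martingale, and then Lemma \ref{lemm3.10} applied to $f=I_Qhe^{\eta\psi}$, with the two resulting terms matched to those in \eqref{eq3.10} exactly as in the paper. The only caveat is your choice of $T$ from the boundedness of $Q$: the lemma's constant is claimed independent of $Q$, so $T$ should be regarded as fixed in advance with $Q\subset(0,T)\times\mathbb{R}^d$ (whence $\tau\le T$, as the paper notes), rather than chosen from $Q$.
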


\begin{proof}
By It\^o's formula,
\begin{align*}
\Phi_\tau=&\Phi_0+m_\tau+\int_0^\tau\Phi_t[\delta D_t\psi+\frac{\delta}{2}\sum_{i,j=1}^{d}\partial_j(a_{ij}\partial_i\psi)-2(D_t\psi)_+
\\&\quad\quad\quad\quad\quad\quad\quad\quad+\frac{1}{2}(|\delta-1|^2-1)|\nabla\psi^*\sigma\sigma^*\nabla\psi|](t,Y_t)dt\\
\end{align*}
where $m_t$ is a local martingale starting at zero. By using \eqref{eq3.9}, and the inequality $|\delta-1|\leq1$ we obtain
\begin{equation}\label{eq3.11}\Phi_\tau\leq \Phi_0+\delta\int_0^\tau\Phi_th(t,Y_t)\exp(\epsilon\psi(t,Y_t))dt+m_\tau.
\end{equation}
Since $\Phi_t\geq0$ we take the expectations of both sides and drop $Em_\tau$. More precisely, we introduce $\tau_n:=\inf\left\{t\geq0:|m_t|\geq n\right\}$ and substitute $\tau\wedge\tau_n$ in place of $\tau$ in \eqref{eq3.11}. After that we take expectations, use the fact that $Em_{\tau\wedge\tau_{n}}=0$, let $n\rightarrow\infty,$ and finally use Fatou's Lemma with monotone convergence theorem. Furthermore, we denote $f=I_Qh\exp(\eta\psi)$ and notice that $\tau\leq T$. Then in the notation of Lemma \ref{lemm3.10}, we find that
\begin{align*} E\Phi_\tau&\leq N+NE\int_0^\tau\rho_t(0,0)\alpha_t(0,0)f(t,Y_t)dt\\&\leq N+N\int_0^TE\rho_t(0,0)\alpha_t(0,0)f(t,Y_t)dt=N+N\int_0^Tu_t(0,0)dt.\end{align*}
It only remains to note that the first term in the right-hand side of \eqref{eq3.8} is just the second one on the right in \eqref{eq3.10} and the second integral on the right in \eqref{eq3.8} is less than $volQ^{1,1}\sup_{Q^{1,1}}h^{d+3}\exp[\eta K_2(d+3)].$ The Lemma is proved.
\end{proof}

\begin{theorem}\label{th3.12}
Let Assumption \ref{ass3} hold. Let $K_1$, $K_2\in[0,\infty)$ and $\epsilon\in[0,2)$ be some constants and let $Q$ be a bounded subdomain of $Q_{T}$ and $h$ be a nonnegative Borel function on $Q$. Assume that for some $p$, $q$ satisfying $p,q\in(2,\infty)$ and $\frac{d}{p}+\frac{2}{q}<1,$ we have
$$hI_{Q^{1,1}}\leq K_2,\quad \psi I_{Q^{1,1}}\leq K_2,\quad \Vert I_{Q^{1,1}\nabla\psi }\Vert_{\mathbb{L}_p^q}\leq K_2.$$
Also assume that on $Q$
\begin{align*}
\psi\geq0,\quad 2D_t\psi\leq K_1\psi,\quad \\ 2D_t\psi+\sum_{i,j=1}^{d}\partial_j(a_{ij}\partial_i\psi)\leq he^{\epsilon\psi}.
\end{align*}
 Denote by $X_t$, $t\in[0,T]$, the solution of
 $$X_t=\int_0^t\sigma(s,X_s)dW_s+\int_0^t(-\sigma\sigma^*\nabla\psi)(s,X_s)ds+(\frac{1}{2}\sum_{j=1}^d\int_0^t\partial_ja_{ij}(s,X_s)ds)_{1\leq i\leq d}.$$
 Then for any $r\in(1,4/(2+\epsilon)]$ there exists a constants $N$, depending only on  $K$, $K_1$, $K_2$, $r$, $d$, $T$, $p$, $q$,  and $\epsilon$, such that
\begin{equation}\label{eq3.12}
E\sup_{t\leq\tau_Q(X_\cdot)}\exp[\mu(\psi(t,X_t)+\nu|X_t|^2)]\leq N+NH_Q(T,a,r)
\end{equation}
  where $H_Q$ is introduced in Assumption \ref{ass2.1}, $a=(2-r\eta)\nu$, $\eta=2\delta+\epsilon$, $\mu$, $\nu$ and $\delta$ are taken from \eqref{eq2.5}. Here $\tau_Q(X_{\cdot}):=\inf\left\{t\geq 0:(t,X_t)\notin Q\right\}$.
  \end{theorem}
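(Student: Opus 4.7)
The plan is to apply Itô's formula to $F(t,x):=\exp[\mu\psi(t,x)+\mu\nu|x|^2]$ along the trajectory of $X$, bound the resulting drift using the Lyapunov hypothesis in \eqref{eq3.9}, reduce the pointwise expectation to the setting of Lemma \ref{lemm3.11} via Girsanov's theorem, and upgrade to the supremum via Doob's maximal inequality.

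Because $|\nabla\psi|$ is only in $\mathbb{L}_p^{q,loc}$, I would first mollify $\psi$ to a smooth $\psi^\varepsilon$, apply Itô to $F^\varepsilon(t,X_t)$, and pass $\varepsilon\downarrow 0$ using the Krylov estimate of Lemma \ref{lemm3.4} along the stopped trajectory $t\wedge\tau_Q(X_\cdot)$ to control the convergence of the nonlinear drift terms (in particular $|\sigma^*\nabla\psi^\varepsilon|^2$). A direct computation of the drift $\mathcal{A}$ of $F(t,X_t)/F(t,X_t)$ yields
\begin{align*}
\mathcal{A} &= \mu D_t\psi + \tfrac{\mu}{2}\sum_{i,j}\partial_j(a_{ij}\partial_i\psi) - \mu\bigl(1-\tfrac{\mu}{2}\bigr)|\sigma^*\nabla\psi|^2 \\
&\quad - 2\mu\nu(1-\mu)\,x\cdot\sigma\sigma^*\nabla\psi + \mu\nu\Bigl[\mathrm{tr}(a)+\sum_{i,j}x_i\partial_ja_{ij}\Bigr] + 2\mu^2\nu^2|\sigma^*x|^2.
\end{align*}
The first two terms are bounded by $\tfrac{\mu}{2}he^{\epsilon\psi}$ via \eqref{eq3.9}. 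The cross term $-2\mu\nu(1-\mu)\,x\cdot\sigma\sigma^*\nabla\psi$ is handled by Young's inequality, splitting it between the negative $-\mu(1-\mu/2)|\sigma^*\nabla\psi|^2$ contribution and a multiple of $|\sigma^*x|^2$; the precise choice $\nu=\mu/(12KT)$ is exactly what is needed so that the residual $|\sigma^*x|^2$ coefficient is bounded by a small multiple of $\mu\nu K$. The linear-in-$x$ drift from $\mathrm{tr}(a)$ and $\partial_j a_{ij}$ is uniformly bounded on $Q$ by Assumption \ref{ass3}, contributing at most a constant multiple of $F$.

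The resulting inequality has the form $F(t\wedge\tau_Q,X_{t\wedge\tau_Q}) \leq F(0,0) + M_t + \tfrac{\mu}{2}\int_0^{t\wedge\tau_Q}\tilde h(s,X_s)e^{\eta\psi(s,X_s)}F(s,X_s)\,ds$, for a local martingale $M$, a nonnegative $\tilde h$ dominated by $h$ plus a constant, and $\eta=\epsilon+2\delta<2$. Taking expectations and applying Girsanov's theorem to identify $E[\mathrm{functional}(X_\cdot)]$ with $E[\rho_{\cdot}\cdot\mathrm{functional}(Y_\cdot)]$ reduces the stopping-time estimate to the framework of Lemma \ref{lemm3.11} applied with exponent $\delta=1/2-\epsilon/4$. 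The factor $e^{-TK_1/(2\delta)}$ in the definition $\mu=(\delta/2)e^{-TK_1/(2\delta)}$ is precisely what is produced when one converts the pointwise bound $2(D_t\psi)_+\leq K_1\psi$ (which follows from Assumption \ref{ass2.1}(iv) and $\psi\geq 0$) into an exponential factor via Gronwall, in order to match the $\exp(-2\int(D_t\psi)_+\,ds)$ factor inside $\Phi_\tau$.

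To upgrade from the stopping-time expectation to $E\sup_{t\leq\tau_Q(X_\cdot)}F(t,X_t)$, I would raise $F$ to a slightly larger power $1+\kappa$ (absorbed by a slightly smaller choice of $\mu$ at the input to Lemma \ref{lemm3.11}) and apply Doob's $L^{1+\kappa}$ maximal inequality to the positive submartingale part, while the drift integral is handled pathwise using the Lyapunov bound just derived. The $H_Q(T,a,r)$ contribution with $a=(2-r\eta)\nu$ then enters exactly through the right-hand side of Lemma \ref{lemm3.11}. The main obstacle is the bookkeeping in the Itô step: the constants $\mu,\nu,\delta$ must be balanced so that every occurrence of $|\sigma^*\nabla\psi|^2$ and of $|x|^2$ produced by the expansion of $F$ and by the drift $-\sigma\sigma^*\nabla\psi$ of $X$ is either negative-definite, absorbed into $\mu\nu|x|^2$ inside $F$, or dominated by the allowable $he^{\epsilon\psi}$. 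A secondary technical point is justifying Itô's formula and Girsanov for $\psi$ of only Sobolev regularity, which is a standard mollification argument using the Krylov estimate of Lemma \ref{lemm3.4}.
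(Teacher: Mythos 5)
Your skeleton (mollify, It\^o on an exponential Lyapunov functional, Girsanov, feed into Lemma \ref{lemm3.11}, balance $\mu,\nu,\delta$) points in the right direction, but two steps that carry the actual weight of the paper's proof are missing or would fail. First, the quadratic part $\mu\nu|x|^2$ cannot be disposed of by Young's inequality inside the It\^o drift. After your expansion the leftover drift is of the form $\tilde h\,e^{\eta\hat\psi}F$ with $\hat\psi=\psi+\nu|x|^2$, and there is no way to "handle it pathwise" or by the Krylov estimate of Lemma \ref{lemm3.4}: the only control on $h$ is $\sup_{Q^{1,1}}h$ plus the Gaussian-weighted integral $H_Q$, and turning $E\int_0^\tau(\cdot)\,h\,e^{\epsilon\hat\psi}dt$ into $H_Q$ is exactly what Lemmas \ref{cor3.8}, \ref{lemm3.9}, \ref{lemm3.10} do, and they require the integrand to carry the exponential-martingale (Girsanov density) structure of $\Phi_t$ built from $\nabla\hat\psi$ --- a structure your bare $F=e^{\mu\hat\psi(t,X_t)}$ does not have. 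Moreover, since the drift of $X$ is $-\sigma\sigma^*\nabla\psi$ (not $-\sigma\sigma^*\nabla\hat\psi$), the Girsanov density that maps $X$ to $Y$ involves only $\nabla\psi$ and therefore does not match the $\hat\psi$-functional needed in Lemma \ref{lemm3.11}. The paper resolves this mismatch by introducing the auxiliary process $\hat X$ with the augmented drift $-\sigma\sigma^*\nabla\hat\psi$ (equation \eqref{eq3.13}), checking the Lyapunov condition \eqref{conass} for $\hat\psi$ (this is where the boundedness of $\partial_j a_{ij}$ absorbs the linear-in-$x$ terms into $(h+C)e^{\epsilon\hat\psi}$), applying Lemma \ref{lemm3.11} with exponent $2\delta$ to get $\hat E\hat M_\tau\le N_0$ and the intermediate bound \eqref{eq3.14}, and only then comparing $EM_\tau$ with $\hat E\hat M_\tau$ through a second change of measure plus H\"older, where \eqref{eq3.14} and the choice $\nu=\mu/(12KT)$ (so that $24\nu^2KT=2\mu\nu$) are what make the extra exponential factor finite. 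None of this is replaced by your choice of $\nu$ in the Young step.

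Second, the passage to the supremum cannot go through Doob's $L^{1+\kappa}$ maximal inequality: the stopped process $\exp[\mu\hat\psi(t\wedge\tau_Q,X_{t\wedge\tau_Q})]$ is not a submartingale (the drift has no sign), and no $L^{1+\kappa}$ bound of a terminal value is available --- all that the machinery yields is $EM_\tau\le N_1$ for \emph{every} stopping time $\tau\le\tau_Q(X_\cdot)$. The paper therefore uses the domination-type lemma on transformations of stochastic inequalities (Lemma 3.2 in \cite{I.N.}), which converts exactly this family of first-moment bounds into $E\sqrt{M_*}\le 3N_1$; this is why the argument runs with exponent $2\delta$ and then takes a square root, and why $\mu=(\delta/2)e^{-TK_1/(2\delta)}$: the factor $e^{-TK_1/(2\delta)}$ comes from the pathwise Gronwall comparison of $\hat\psi(t,X_t)$ with $\ln M_*^{1/\delta}$ using $2D_t\hat\psi\le K_1\hat\psi$ and the subtracted term $\frac{K_1}{2}\int_0^t\hat\psi\,ds$ in $M_t$, not from matching the $\exp(-2\int(D_t\psi)_+ds)$ factor inside $\Phi_\tau$ as you suggest. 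Without the auxiliary $\hat X$/second Girsanov step and without the domination lemma, the proposal does not close.
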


\begin{proof}
Define $\hat\psi=\psi+\nu|x|^2$,
$$M_t=\exp(\delta\hat\psi(t,X_t)-\frac{K_1}{2}\int_0^t\hat\psi(s,X_s)ds),\quad M_*=\sup_{t\leq\tau_Q(X_\cdot)}M_t.$$
Then for $t\leq\tau_Q(X_\cdot)$,
$$\hat\psi(t,X_t)\leq \ln M_*^{1/\delta}+\frac{K_1}{2\delta}\int_0^t\hat\psi(s,X_s)ds$$
and hence by Gronwall's inequality
$$\hat\psi(t,X_t)\leq e^{tK_1/(2\delta)}\ln M_*^{1/\delta}\leq e^{TK_1/(2\delta)}\ln M_*^{1/\delta}.$$
Take $\mu=\frac{\delta}{2}e^{-TK_1/(2\delta)}$, then
\begin{equation}\label{eqn3.12}
\exp(\mu\hat\psi(t,X_t))\leq  \sqrt{M_*}.
\end{equation}
Therefore, to prove \eqref{eq3.12}, it suffices to prove that $E\sqrt{M_*}\leq N$. It turns by a well known result on transformations of stochastic inequalities (see Lemma 3.2 in \cite{I.N.}), if $EM_\tau\leq N_1$ for all stopping times $\tau\leq\tau_Q(X_\cdot)$. Then $E\sqrt{M_*}\leq 3N_1$. Thus, it suffices to estimate $EM_\tau$.\\
\indent On a probability space carrying a $d-$dimensional Wiener process $(\hat {W}_t)_{t\geq0}$ introduce $(\hat {X}_t)_{t\geq0}$ as the solution of the equation
\begin{equation}\label{eq3.13}
\hat{X}_t=\int_0^t\sigma(s,\hat{X}_s)d\hat{W}_s-\int_0^{t\wedge \tau_Q(\hat{X_\cdot})}\sigma\sigma^*\nabla\hat\psi(s,\hat X_s)ds+(\int_0^{t\wedge \tau_Q(\hat{X_\cdot})}\frac{1}{2}\sum_{j=1}^d\partial_j a_{ij})(s,\hat X_s)ds)_{1\leq i\leq d}.
\end{equation}
Also set
$$\hat {M}_t=\exp(2\delta\hat\psi(t,\hat{X}_t)-2\int_0^t(D_t\hat\psi)_+(s,\hat{X}_s)ds),\quad t\geq0.$$
Write $\hat E$ for the expectation sign on the new probability space and observe that on $Q$
\begin{align}\label{conass}
2D_t\hat\psi+\sum_{i,j=1}^{d}\partial_j(a_{ij}\partial_i\hat\psi)&=2D_t\psi+\sum_{i,j=1}^{d}\partial_j(a_{ij}\partial_i\psi)+2\nu\sum_{i,j=1}^dx_i\partial_j a_{ij}+2\nu\sum_{i,j=1}^d\partial_ja_{ij}\nonumber\\&\leq (h+C)e^{\epsilon\hat\psi}.
\end{align}
Here $2\nu\sum_{i,j=1}^dx_i\partial_j a_{ij}+2\nu\sum_{i,j=1}^d\partial_ja_{ij}\leq (h+C)e^{\epsilon\hat\psi}$ holds because of Assumption \ref{ass3}, which means $|\partial_ja_{ij}|$ is bounded. Then after an obvious change of measure (cf. Lemma \ref{Girsanov} ) inequality \eqref{eq3.10} with $2\delta$, $\hat E$, $\hat\psi$, and $\hat{W}_t$ in place of $\delta$, $E$, $\psi$, and $W_t$, respectively, $\eta=2\delta+\epsilon$, and $r\in(1,4/(2+\epsilon)]\subset(1,2/(2\delta+\epsilon)] $ is written as
$$\hat E\hat{M}_\tau\leq N+N(\int_Qh^rI_{(0,T)}e^{-(2-r\eta)\hat\psi}dtdx)^{1/r}$$
and since $\hat\psi\geq \nu|x|^2$ on $Q$, we obtain
$$\hat E\hat {M}_\tau\leq N+N(\int_Qh^rI_{(0,T)}e^{-(2-r\eta)\nu|x|^2}dtdx)^{1/r}= N+N H_Q^{1/r}(T,(2-r\eta)\nu,r)=:N_0$$
for all stopping times $\tau\leq\tau_Q(\hat{X}_\cdot)$, which yields
$$\hat E\sqrt{\hat M_*}\leq 3N_0.$$
Combining this with the inequality
$$\exp(2\delta\hat\psi(t,\hat{X}_t)-{K_1}\int_0^t\hat\psi(x,\hat{X}_s)ds)\leq {\hat{M}_t},\quad\quad t\leq\tau_{Q}(\hat{X}_\cdot),$$
the left-hand side of which is quite similar to $M_t$ but with $2\hat\psi$ in place of $\hat\psi$, the above argument deduce
\begin{equation}\label{eq3.14}
\hat E\sup_{t\leq\tau_Q(\hat{X}_\cdot)}\exp(2\mu \nu|\hat{X}_t|^2)\leq \hat E\sup_{t\leq\tau_Q(\hat{X}_\cdot)}\exp(2\mu\hat\psi(t,\hat{X}_t))\leq
NN_0.
\end{equation}
We now estimate $EM_\tau$ through $\hat E\hat {M}_\tau$ by using Girsanov's theorem and H\"older's inequality. We use a certain freedom in choosing $\hat{X}_t$ and $\hat{W}_t$ and on the probability space where $W_t$ and $X_t$ are given we introduce a new measure by the formula:
$$\hat P(d\omega)=\exp(-2\nu\int_0^\infty X_t^* \sigma(t,X_t)I_{t<\tau_Q(X_\cdot)}dW_t-2\nu^2\int_0^\infty X _t^*(\sigma\sigma^{*})(t,X_t)X_tI_{t<\tau_Q(X_\cdot)}dt)P(d\omega).$$
Since $Q$ is a bounded domain, then we have
 $$E\exp\Big(2\nu^2\int_0^\infty X _t^*(\sigma\sigma^{*})(t,X_t)X_tI_{t<\tau_Q(X_\cdot)}dt\Big)\leq E\exp\Big(2\nu^2K\int_0^T X _t^*X_tI_{t<\tau_Q(X_\cdot)}dt\Big)<\infty,$$
 which implies that $\hat P$ is a probability measure. Furthermore, as is easy to see, for $t\leq\tau_Q(X_\cdot)$
$$\hat{X}_t:=X_tI_{t<\tau_Q(X_\cdot)}+(\int_0^t\sigma(s,X_s)dW_s-\int_0^{\tau_Q(X_\cdot)}\sigma(s,X_s)dW_s+X_{\tau_Q(X_\cdot)})I_{t\geq\tau_Q(X_\cdot)}$$
coincides with $X_t$  and satisfies \eqref{eq3.13} for $t\leq\tau_Q(X_\cdot)$ with
$$\hat{W}_t=W_t+2\nu\int_0^{t\wedge\tau_Q(X_\cdot)}\sigma^*(s,X_s)X_sds$$
which is a Wiener process with respect to $\hat P$. In this situation for $\tau\leq\tau_Q(X_\cdot)=\tau_Q(\hat{X}_\cdot)$
\begin{align*} EM_\tau&\leq\hat E\hat{M}_\tau^{1/2}\exp(2\nu\int_0^\infty \hat X_t^*\sigma(t,\hat X_t)I_{t<\tau_Q(\hat X_\cdot)}d\hat W_t-2\nu^2\int_0^\infty \hat X_t^*(\sigma\sigma^*)(t,\hat X_t)\hat X_tI_{t<\tau_Q(\hat X_\cdot)}dt)\\&
\leq(\hat E\hat {M}_\tau)^{1/2}(\hat E\rho^{1/2}\exp(12v^2\int_0^\infty \hat X_t^*(\sigma\sigma^*)(t,\hat X_t)\hat X_tI_{t<\tau_Q(\hat X_\cdot)}dt))^{1/2}\end{align*}
where
$$\rho=\exp(8\nu\int_0^\infty \hat X_t^*\sigma(t,\hat X_t)I_{t<\tau_Q(\hat X_\cdot)}d\hat W_t-32\nu^2\int_0^\infty \hat X_t^*(\sigma\sigma^*)(t,\hat X_t)\hat X_tI_{t<\tau_Q(\hat X_\cdot)}dt).$$
Observe that $\hat E\rho=1$ and $\hat E\hat {M}_\tau\leq N_0$. Therefore,
$$EM_\tau\leq N_0^{1/2}(\hat E\exp(24\nu^2\int_0^{\tau_Q({X}_\cdot)}(\hat X_t^*(\sigma\sigma^*)(t,\hat X_t)\hat X_t)dt))^{1/4}.$$
It only remains to refer to \eqref{eq3.14} after noticing that
$$24\nu^2\int_0^{\tau_Q({X}_\cdot)}(\hat X_t^*(\sigma\sigma^*)(t,\hat X_t)\hat X_t)dt\leq 24\nu^2KT\sup_{t\leq\tau_Q({X}_\cdot)}|{X}_t|^2=2\mu \nu \sup_{t\leq\tau_Q({X}_\cdot)}|{X}_t|^2$$
and use the inequality $\iota^\alpha\leq 1+\iota$ if $\iota\geq 0$, $0\leq \alpha\leq 1$, where $\nu=\mu/(12KT)$. The theorem is proved.
\end{proof}

\section{Proof of Theorem \ref{th2.3}}\label{sec4}
 By Theorem \ref{mainadd1}  the strong solution $(t,X_t)_{t\geq0}$ to \eqref{eq2.4} is defined at least until the time $\xi$ when $(s+t,X_t)_{t\geq0}$ exits from all $Q^n$. We claim that in order to prove $\xi=\infty $ $a.s.$ and also to prove the second assertions of the theorem, it suffices to prove that for each $T\in(0,\infty)$ and $m\geq 1$ there exists a constant $N$, depending only on $K$,  $K_1$, $d$, $p(m+1)$, $q(m+1)$, $\epsilon$, $T$, $\Vert I_{Q^{m+1}}\nabla\phi \Vert_{\mathbb{L}_{p(m+1)}^{q(m+1)}}$, $dist(\partial Q^m,\partial Q^{m+1})$, $\sup_{Q^{m+1}}\left\{\phi+h\right\}$, and the function $H$, such that for $(s,x)\in Q^m$ we have
\begin{equation}\label{eq4.1}
E\sup_{t<\xi\wedge T}\exp(\mu\phi(s+t,X_t)+\mu \nu|X_t|^2)\leq N.
\end{equation}
  \indent To prove the claim notice that \eqref{eq4.1} implies
  \begin{equation}\label{eq4.2}
  \sup_{t<\xi\wedge T}(\phi(s+t,X_t)+|X_t|^2)<\infty\quad a.s..
  \end{equation}
  It follows that $a.s.$ there exists an $n\geq1$ such that up to time $\xi\wedge T$ the trajectory $(Z_t)_{t\geq0}=(s+t, X_t)_{t\geq0}$ lies in $Q^n$. Indeed, on the set of all $\omega$ where this is wrong, for the exit time $\xi^n$ of $Z_t$ from $Q^n$ we have $\xi^n<T$ for all $n$. However owing to \eqref{eq4.2}, the sequence $X_{\xi^n}$ should be bounded, then the sequence $Z_{\xi^n}$ has limit points on the boundary $\partial Q$. According to the Assumption \ref{addvi} (vi), it only happens with probability zero. Hence, $a.s.$ there is $n\geq 1$ such that $T\leq\xi^n$. Since this happens for any $T$ and $\xi^n<\xi$ we conclude that $\xi=\infty$ $a.s.$, which proves our intermediate claim.\\
\indent Since $dist(\partial Q^m,\partial Q^{m+1})>0$ we can find $\kappa\in(0,1]$ sufficiently small so that $(s,x)+Q^{\kappa^2,\kappa}\subset Q^{m+1}$ for all $(s,x)\in Q^m$. Therefore, by translation and dilation, without losing generality, we may assume that $s=0$, $x=0$ and $Q^{1,1}\subset Q^m$.\\
\indent Next we notice that obviously, to prove \eqref{eq4.1} it suffices to prove that with $N$ of the same kind as in \eqref{eq4.1} for any $n\geq m+2$,
\begin{equation}\label{eq4.3}
E\sup_{t<\xi^n\wedge T}\exp(\mu \phi(t,X_t)+\mu \nu|X_t|^2)\leq N.
\end{equation}
\indent Fix an $n\geq m+2$. By virtue of Theorem \ref{mainadd1}, notice that the left-hand side of \eqref{eq4.3} will not change if we change  $-\sigma\sigma^*\nabla\phi+(\frac{1}{2}\sum_{j=1}^d\partial_j a_{ij})_{1\leq i\leq d}$ outside of $Q^n$.  Therefore we may replace $\phi$ with $\phi\eta$ and replace $\frac{1}{2}\sum_{j=1}^d\partial_j a_{ij}$ with $\frac{1}{2}\sum_{j=1}^d\partial_j a_{ij}\eta$  for each $1\leq i\leq d$, where $\eta$ is an infinitely differentiable function equal $1$ on a neighborhood of ${Q}^n$ and equals $0$ outside of $Q^{n+1}$. To simplify the notation we just assume that $\phi$ and $\frac{1}{2}\sum_{j=1}^d\partial_j a_{ij}$ vanishes outside of $Q^{n+1}$ and \eqref{eq5} holds in a neighborhood of $\overline{Q^n}$. This is harmless as long as we prove that $N$ depends appropriately on the data.\\
\indent Now we mollify $\phi$ by convolving it with a $\delta-$like nonnegative smooth function $\zeta^\gamma(t,x)=\gamma^{-d-1}\zeta(t/\gamma,x/\gamma)$, $\zeta$ has compact support in $Q^{1}$. Denote by $\phi^{(\gamma)}$ the result of the convolution and use an analogous notation for the convolution of $\zeta^\gamma(t,x)$ with other functions. Also denote by $(X_t^\gamma)_{t\geq0}$ the solution of the following SDE
 $$X_t^{\gamma}=\int_0^t\sigma(s,X_s^{\gamma})dW_s+\int_0^t(-\sigma\sigma^*\nabla\phi^{(\gamma)})(s,X_s^{\gamma})ds+
 (\frac{1}{2}\sum_{j=1}^d\int_0^{t}\partial_ja_{ij}(s,X_s^\gamma)ds)_{1\leq i\leq d}.$$
 For $x_{\cdot}\in \mathcal{C}([0,\infty),\mathbb{R}^{d})$ we define $\xi_n(x_\cdot):=\inf\left\{t\geq0:(t,x_t)\not\in Q^n\right\}$. Consider the bounded function $f$ on $\mathcal{C}([0,\infty),\mathbb{R}^{d})$ given by the formula
$$f(x_\cdot)=\sup_{t\leq\xi^n(x_\cdot)\wedge T}\exp(\mu\phi(t,x_t)+\mu \nu|x_t|^2),$$
 and let $f^\gamma$ be defined by the same formula with $\phi^{(\gamma)}$ in place of $\phi$.
 Since $\sigma\sigma^*$ is bounded, by using Lemma \ref{lemm3.6} we conclude that the left-hand side of \eqref{eq4.3} is equal to the limit as $\gamma\downarrow 0$ of
\begin{align}\label{approximation}
Ef^\gamma(X_\cdot^{\gamma})=E\sup_{t<\xi^n(X_\cdot^{\gamma})\wedge T}\exp(\mu\phi^{(\gamma)}(t,X_t^\gamma)+\mu \nu|X_t^\gamma|^2).
\end{align}
In fact, if we denote $M_t=\int_0^t\sigma(s,M_s)dW_s$, $t\geq0$, according to Lemma \ref{lemm3.6}
\begin{align*} |Ef(X_\cdot)-Ef^\gamma(X_\cdot^{\gamma})| &\leq N'(E|f(M_\cdot)-f^\gamma(M_\cdot)|^2)^{1/2}+N'\Vert f\Vert_{\infty}\Vert \sigma\sigma^*(\nabla\phi-\nabla\phi^{(\gamma)})I_{Q^n}\Vert_{L_p^q}\\ & \leq N'(E|f(M_\cdot)-f^\gamma(M_\cdot)|^2)^{1/2}+ KN'\Vert (\nabla\phi-\nabla\phi^{(\gamma)})I_{Q^n}\Vert_{L_p^q},  \end{align*}
which of course tends to $0$ when $\gamma\rightarrow 0$, since $\phi$ is continuous and bounded on $Q^n$, $|I_{Q^n}\nabla\phi|\in \mathbb{L}_p^q$, then $f^\gamma\rightarrow f$ and $I_{Q^n}\nabla\phi^{(\gamma)}\rightarrow I_{Q^n}\nabla\phi $ in $\mathbb{L}_p^q$ as $\gamma\rightarrow 0.$ \\
\indent In the light of the fact that \eqref{eq5} holds in a neighborhood of ${Q}^n$ we have that on $Q^n$ for sufficiently small $\gamma$
\begin{align}
2D_t\phi^{(\gamma)}+\sum_{i,j=1}^{d}\partial_j(a_{ij}\partial_i\phi^{(\gamma)})&\leq ((he^{\epsilon\phi})^{(\gamma)}e^{-\epsilon\phi^{(\gamma)}}+\sum_{i,j=1}^d|\partial_j(a_{ij}\partial_i\phi^{(\gamma)})-(\partial_j(a_{ij}\partial_i\phi))^{(\gamma)}|) e^{\epsilon\phi^{(\gamma)}}
\nonumber
\\&=:h^{\gamma}e^{\epsilon\phi^{(\gamma)}}.\label{eq4.4}
\end{align}
Since $h$ is continuous, then $(he^{\epsilon\phi})^{(\gamma)}e^{-\epsilon\phi^{(\gamma)}}\rightarrow h$ uniformly on $Q^n$. Besides $\sum_{i,j=1}^d|\partial_j(a_{ij}\partial_i\phi^{(\gamma)})-(\partial_j(a_{ij}\partial_i\phi))^{(\gamma)}|)\rightarrow 0$ pointwise. Hence if we
 denote
 $$H_{Q^n}^{\gamma}(T,(2-r\eta)\nu,r):=\int_{Q^n}(h^{\gamma})^r(t,x)I_{(0,T)}(t)e^{-(2-r\eta)\nu|x|^2}dtdx,$$
 we have
 \begin{align*}
 \lim_{\gamma\rightarrow 0}H_{Q^n}^{\gamma}(T,(2-r\eta)\nu,r)\leq  H_{Q^n}(T,(2-r\eta)\nu,r).
 \end{align*}
  Furthermore, the conditions $2D_t\phi^{(\gamma)}\leq K_1\phi^{(\gamma)}$
 also hold in a neighborhood of ${Q}^n$ for sufficiently small $\gamma$.  \\
\indent We now apply Theorem \ref{th3.12} for $Q^{n}\cap Q_{ T}$ in place of $Q$ to conclude that
\begin{align*} E\sup_{t<\xi^n\wedge T} \exp(\mu\phi(t,X_t)+\mu\nu|X_t|^2)&=\lim_{\gamma\downarrow0}E\sup_{t<\xi^n(X_\cdot^{\gamma})\wedge T}\exp(\mu\phi^{(\gamma)}+\mu \nu|X_t^{\gamma}|^2)\\&\leq \lim_{\gamma\downarrow0}(N+NH_{Q^n}^\gamma(T,(2-r\eta)\nu,r))\\&\leq N+NH_{Q^n}(T,(2-r\eta)\nu,r)\\&\leq N+NH_Q(T,(2-r\eta)\nu,r),\end{align*}
where the values of all the parameters are specified in \ref{th3.12} and the constants $N$ depend only on $r$, $d$, $p(m+1)$, $q(m+1)$, $\epsilon$, $T$, $K$, $K_1$, $\Vert I_{Q^{m+1}}\nabla\phi \Vert_{L_{p(m+1)}^{q(m+1)}}$, and $\sup_{Q^{m+1}}\left\{\phi+h\right\}.$\\
\indent We finally use condition $(H)$ from Assumption \ref{ass2.1}  . Fix any $r_0\in(1,2/(2\delta+\epsilon))$, set $a=(2-r_0\eta)\nu$ $(>0)$ and take $r=r(T,a)$ from condition $(H)$. H\"older's inequality shows that if condition $(H)$ is satisfied with $r=r'$ where $r'>1$, then it is also satisfied with any $r\in(1,r']$. Hence without losing generality we may assume that $r=r(T,a)\in(1,r_0]$. Then $(2-r\eta)\nu\geq a$ and $H_Q(T,(2-r\eta)\nu,r)\leq H_Q(T,a,r(T,a))<\infty.$ Thus, Theorem \ref{th3.12} yields \eqref{eq4.3}. The theorem is proved.\\
\indent $\hfill{} \Box$\\

\begin{remark}\label{remm4.1}\emph{
We can add another drift term to \eqref{eq2.4}, it does not have to be the gradient of a function. Under Assumption \ref{ass2.1}   take a Borel measurable locally bounded $\mathbb{R}^d$ valued function $b(t,x)$ defined on $\mathbb{R}^{d+1}$ satisfying the condition $|b(t,x)|\leq c(1+|x|)$, where $c$ is a finite positive constant, then it turns out that the first assertion of Theorem \ref{th2.3}  still holds with the equation
\begin{align}\label{addsde}
X_t=x+\int_0^t\sigma(s+r,X_r)dW_r&+\int_0^t(-\sigma\sigma^*\nabla\phi)(s+r,X_r)dr+\int_0^tb(s+r,X_r)dr\nonumber\\&+(\int_0^t\frac{1}{2}\sum_{j=1}^d\partial_ja_{ij}(s+r,X_r)dr)_{1\leq i\leq d},\quad t\geq0\quad
\end{align}
in place of \eqref{eq2.4}. To prove this we follow the proof in \cite{KR} Remark 8.2.
   The only needed material is the Markov property of solution to equation \eqref{eq2.4}, {which we already get from the proof of Theorem \ref{mainadd1}.} By applying Girsanov theorem we get the non-explosion result for the equation \eqref{addsde}.\\
  \indent Further we can carry our results in Theorem \ref{th2.3}  to the cases in which $\phi$ is not necessarily nonnegative but $\phi\geq -C(1+|x|^2)$, $C>0.$ Since the equation \eqref{eq2.4} is equivalent to the following
  \begin{align*}
  X_t=x&+\int_0^t\sigma(s+r,X_r)dW_r+(\frac{1}{2}\int_0^t\sum_{j=1}^d\partial_ja_{ij}(s+r,X_r)dr)_{1\leq i\leq d}\\&+\int_0^t2C\sigma\sigma^*(s+r,X_r)X_rdr
  -\int_0^t\sigma\sigma^*\nabla[C(1+|x|^2)+\phi](s+r,X_r)dr,\quad t\geq0,
  \end{align*}
  obviously $|\sigma\sigma^*(t,x)x|\leq K(1+|x|)$. We conclude that the SDE \eqref{eq2.4} has a unique solution defined for all times if $(s,x)\in Q$ provided that $\phi+C(1+|x|^2)$ rather than $\phi$ satisfies Assumption \ref{ass2.1}.}
  \end{remark}

\section{Examples and applications}
In this section, we will give several examples to show the local well-posedness and  non-explosion of solution to the SDE that our results can be applied. \\
\subsection{Examples-Maximal local well-posedness}

\begin{example}
Consider the equation \eqref{eq1} when $d=1$,  $Q=\mathbb{R}_{+}\times(0,\infty)$, $Q^n=(0,n)\times\left\{x:1/n<x<n\right\}$ for $n\in\mathbb{N}$, $b(t,x)=-x^{-1}$, $\sigma(t,x)=(1+x^2)^{-1}$. \\
\indent For any $(s,x)\in Q$, for any $n\in\mathbb{N}$, if we take $q(n)=\infty$ and $p(n)\in(2,\infty)$, then $1/p(n)+2/q(n)<1$. We can also easily check that $\Vert bI_{Q^n}\Vert_{\mathbb{L}_{p(n)}^\infty}<\infty$, and $\Vert I_{Q^n}\nabla \sigma \Vert_{\mathbb{L}_{p(n)}^\infty}<\infty$. Furthermore, $\sigma(t,x)$ is uniformly continuous in $x$ uniformly with respect to $t$ for $(t,x)\in Q^n$, and there exist positive constants $\delta_n(=(1+n^2)^{-2})$ such that for all $(t,x)\in Q^n$,
$$|\sigma^*(t,x)\lambda|^2\geq \delta_n|\lambda|^2, \quad \forall \lambda\in\mathbb{R}^d.$$
Hence by Theorem \ref{mainadd1} there exists an $(\mathcal{F}_t)$-stopping time $\xi$ and  a unique $(\mathcal{F}_t)$-adapted solution to the following equation
\begin{align*}
X_t=x-\int_0^t\frac{1}{X_r}dr+\int_0^t(1+X_r^2)^{-1}dW_r,\quad t\in[0,\xi).
\end{align*}
\end{example}
\begin{example}
If $d=2$ with $b(t,x)=x\ln|x^{(1)}|=(x^{(1)}\ln|x^{(1)}|,x^{(2)}\ln|x^{(1)}|)$, $\sigma(t,x)=I_{2}\cdot\ln (2+|x|^2)$ on $Q=\mathbb{R}_+\times\mathbb{R}^2\backslash\left\{x^{(1)}=0\right\}$ and $Q_n=(0,n)\times\left\{x\in\mathbb{R}^2:1/n<|x^{(1)}|< n, |x^{(2)}|<n\right\}$, where $x^{(i)}$ denotes the $i-$th exponent of the vector $x\in\mathbb{R}^d$ and $I_{2}$ is the identity matrix in $\mathbb{R}^{2}$. Then by Theorem \ref{mainadd1} for any $(s,x)\in Q$, there exist an $(\mathcal{F}_t)$-stopping time $\xi$ and  a unique $(\mathcal{F}_t)$-adapted solution to the following SDE
 \begin{equation*}
\left\{
\begin{aligned}
X_t^{(1)}=x^{(1)}+\int_0^t X_r^{(1)}\ln |X_r^{(1)}|dr+\int_0^t\ln (2+|X_r|^2)dW_r^{(1)},\\
X_t^{(2)}=x^{(2)}+\int_0^t X_r^{(2)}\ln |X_r^{(1)}|dr+\int_0^t\ln (2+|X_r|^2)dW_r^{(2)},
\end{aligned}
\right.
\end{equation*}
which can be rewritten as
\begin{align*}
X_t=x+\int_0^t X_r\ln |X_r^{(1)}|dr+\int_0^tI_2\ln (2+X_r^2)dW_r, \quad t\in[0,\xi).
\end{align*}
More precisely, for $n\in\mathbb{N}$, we can take $p(n)\in(2,\infty)$ and $q(n)=\infty$, then $\Vert bI_{Q^n}\Vert_{\mathbb{L}_{p(n)}^\infty}<\infty$, and $\Vert \partial \sigma I_{Q^n}\Vert_{\mathbb{L}_{p(n)}^\infty}<\infty$. Put $0<\delta_n<\ln^22$, then condition (ii) in Theorem \ref{mainadd1} also is fulfilled.
\end{example}
\subsection{Example-Non-explosion}
\begin{example}
Consider $d=1$, $Q=\mathbb{R}_{+}\times(0,\infty)$, and $Q^n=(0,n)\times\left\{x:1/n<x<n\right\}$, for $\delta>0$, let $\phi(t,x)=|x|^{-\delta}+|x|$,  $\sigma(t,x)=2+\sin x$.\\
\indent We can find that $\phi$ is a nonnegative continuous function on $Q$ and blows up near the parabolic boundary of $Q$. For $n\in\mathbb{N}$, take $q(n)=\infty$, $p(n)\in(2,\infty)$, then $1/p(n)+2/q(n)<1$ and $\Vert (-\sigma^2\nabla\phi+\sigma\nabla\sigma)I_{Q^n}\Vert_{\mathbb{L}_{p(n)}^\infty}<\infty$. Besides,
  $$\nabla(\sigma^2\nabla \phi)(t,x)\leq C e^{3/2\phi(t,x)}$$
  with constant $C\in(0,\infty)$. For $\sigma$, it can be easily checked that conditions in Assumption \ref{ass2.1} are satisfied. Then by Theorem \ref{th2.3} the following SDE has a unique $(\mathcal{F}_t)$-adapted solution on $Q$:
  \begin{align*}
  X_t=x+\int_0^t(2+\sin X_s)dW_s+\int_0^t (\delta X_s|X_s|^{-\delta-2}&-\frac{X_s}{|X_s|})(2+\sin X_s)^2ds\\&+\int_0^t(2+\sin X_s)\cos X_sds,\quad t\geq0.
  \end{align*}
\end{example}

\subsection{Diffusions in random media}
We apply our results to a particle which performs a random motion in $\mathbb{R}^d$, $d\geq2$, interacting with impurities which are randomly distributed according to a Gibbs measure of Ruelle type. So, the impurities form a locally finite subset $\gamma=\left\{x_k|k\in\mathbb{N}\right\}\subset\mathbb{R}^d$. The interaction is given by a pair potential $V$ and diffusion coefficient $\sigma$ to be specified below defined on $\left\{x\in\mathbb{R}^d:|x|>\rho\right\}$, where $\rho\geq0$ is a given constant. The stochastic dynamics of the particle is then determined by a stochastic equation type \eqref{eq2.4} as in Theorem \ref{th2.3} above with
\begin{align}\label{eq5.21}
Q:=\mathbb{R}_+\times(\mathbb{R}^d\backslash\gamma^\rho), \quad \phi(t,x):=\sum_{y\in\gamma}V(x-y),\quad (t,x)\in Q,
\end{align}
where $\gamma^\rho$ is the closed $\rho-$neighborhood of the set $\gamma$, i.e., the random path $(X_t)_{t\geq0}$ of the particle should be the strong solution of
\begin{align}\label{eq5.22}
X_t=x&+\int_0^t\sigma(X_s)dW_s
+(\frac{1}{2}\sum_{j=1}^d\int_0^t\partial_ja_{ij}(X_s)ds)_{1\leq i\leq d}
-\sum_{w\in\gamma}\int_0^t(\sigma\sigma^*)(X_s)\nabla V(X_s-w)ds.
\end{align}
Below we shall give conditions on the pair potential $V$ and diffusion coefficient $\sigma$ which imply that this is indeed the case, i.e. that Theorem \ref{th2.3} above applies, for all $\gamma$ outside a set of measure zero for the Gibbs measure. Here the original case is from \cite{KR} section 9.1, we generalize it to the multiplicative noise case. Similarly the set of admissible impurities $\gamma$ we can treat is
\begin{align}\label{eq5.23}
\Gamma_{ad}:=\left\{\gamma\subset\mathbb{R}^d|\forall r>0 \exists c(\gamma,r)>0:|\gamma\cap B_r(x)|\leq c(\gamma,r)\log(1+|x|), \forall x\in\mathbb{R}^d\right\},
\end{align}
where $B_r(x)$ denotes the open ball with center $x$ and radius $r$, $|A|$ denotes the cardinality of a set $A$. From \cite{KR} we know that for essentially all classes of Gibbs measure in equilibrium statistical mechanics of interacting infinite particle systems in $\mathbb{R}^d$ the set $\Gamma_{ad}$ has measure one, this is also true for Ruelle measures.\\
\indent We fix a $\gamma\in\Gamma_{ad}$. The necessary conditions on the pair potential $V$ and diffusion coefficient $\sigma$ go as follows (the typical case when $\rho=0$ is also included):\\
\noindent(V1) The function $V$ is positive and once continuously differentiable in $\mathbb{R}^d\cap\left\{|x|>\rho\right\}$, $\lim_{|x|\downarrow \rho}V(x)=\infty$.\\
\noindent(V2) There exist finite constants $\alpha>d/2$, $C\geq0$, $\epsilon\in[1,2)$ such that with $U(x)=:C(1+|x|^2)^{-\alpha}$ we have
\begin{align}\label{eq5.24}
|V(x)|+|\nabla V(x)|\leq U(x) \quad \text{for} \quad |x|> \rho,
\end{align}
and for any $|y|>\rho$
\begin{align}\label{eq5.25}
\sum_{i,j=1}^d(\partial_ja_{ij}(x)\partial_iV(y)+a_{ij}(x)\partial_{i}\partial_jV(y))\leq C(e^{\epsilon(V+U)(y)}-1)
\end{align}
in the sense of distributions on $\left\{x\in\mathbb{R}^d:|x|>\rho\right\}$ where $\sigma(x)=(\sigma_{ij}(x))_{1\leq i,j\leq d}:\mathbb{R}^d\rightarrow\mathbb{R}^d\times\mathbb{R}^d$ satisfies the following conditions:\\
\noindent ($\sigma$1) There exists a positive constant $K$ such that for all $x\in\mathbb{R}^d$
 \begin{align}\label{eq5.25}
 \frac{1}{K}|\lambda|^2\leq\langle(\sigma\sigma^*)(x)\lambda,\lambda\rangle\leq K|\lambda|^2,\quad \forall \lambda\in\mathbb{R}^d.
 \end{align}
\noindent ($\sigma$2) For $1\leq i$, $j\leq d$, $\sigma_{ij}\in\mathcal{C}_b^2(\mathbb{R}^d)$.\\

\indent We emphasize that above conditions are fulfilled for essentially all potentials of interests in statistical physics. \\
\indent Introduce $\bar V(x)=V(x)+2U(x)$, $|x|>\rho$, and for $(t,x)\in Q$ let
\begin{align*}
\bar \phi(t,x):=\sum_{y\in\gamma}\bar V(x-y),\quad ( a_{ij})_{1\leq i,j\leq d}:=\sigma\sigma^*, \\b(t,x):=2\sum_{w\in\gamma}(\sigma\sigma^*)(x)\nabla U(x-w).\quad\quad
\end{align*}
Owing to \eqref{eq5.24}, \eqref{eq5.25} and the fact that $\gamma\in\Gamma_{ad}$, the function $\phi$ is continuously differentiable in $Q$ and $|b(t,x)|\leq NK\log(2+|x|)$, where $N$ is independent of $(t,x)$ (See \cite{KR} Section 9.1). Meanwhile for appropriate constants $N$ on $Q$ we have for $|y|>\rho$

$$2\sum_{i,j=1}^d(\partial_ja_{ij}(x)\partial_iU(y)+a_{ij}(x)\partial_j\partial_iU(y))\leq N(e^{\epsilon U(y)}-1) $$
 because of conditions $(\sigma1)$ and $(\sigma2)$. Combing this with the fact that  $V+U$ is positive and $\sum(e^{a_k}-1)\leq e^{\sum a_k}-1$, $a_k\geq 0$, we find that there exists a constant $N'>0$ independent of $(t,x)$ such that
\begin{align*}
\sum_{i,j=1}^d\partial_j( a_{ij}\partial_i \bar \phi)(x)&=\sum_{i,j=1}^d\sum_{w\in\gamma}\partial_j(a_{ij}(x)\partial_i (V(x-w)+2U(x-w)))
\\&\leq N\sum_{w\in\gamma}\Big((e^{\epsilon(V(x-w)+2U(x-w))}-1)+(e^{\epsilon U(x-w)}-1)\Big)\leq N'(e^{\epsilon \bar \phi(x)}-1).
\end{align*}
It shows that all conditions on $\bar\phi$ and $\sigma$ in Theorem \ref{th2.3} are fulfilled and therefore by Remark \ref{remm4.1} the equation
\begin{align}\label{bar}
X_t=x+\int_0^t\sigma(X_s)dW_s-\int_0^t(\sigma\sigma^*\nabla\bar \phi)(X_s)ds+(\frac{1}{2}\sum_{j=1}^d\int_0^t\partial_j a_{ij}(X_s)ds)_{1\leq i\leq d}+\int_0^tb(X_s)ds
\end{align}
has a unique strong solution defined for all times if $x\in\mathbb{R}^d\backslash\gamma^\rho$. Since equation \eqref{bar} coincides with SDE \eqref{eq5.22}, we get the desired conclusion.

  \subsection{M-particle systems with gradient dynamics}
  In this subsection we consider a model of $M$ particles in $\mathbb{R}^d$ interacting via a pair potential $V$ and diffusion coefficient $\sigma$ satisfying the following conditions:\\
  (V1) The function $V$ is once continuously differentiable in $\mathbb{R}^d\backslash\left\{0\right\}$, $\lim_{|x|\rightarrow0}V(x)=\infty$, and on $\mathbb{R}^d\backslash\left\{0\right\}$ we assume that $V\geq -U$, where $U(x):=C(1+|x|^2)$, $C$ is a constant.\\
  (V2) There exists a constant $\epsilon\in[1,2)$ such that for arbitrary $x,y\in\mathbb{R}^d\backslash\left\{0\right\}$,
  \begin{align}\label{eq5.1}
  \sum_{i,j=1}^d(\partial_ja_{i,j}(x)\partial_iV(y)+a_{i,j}(x)\partial_i\partial_jV(y))
 \leq Ce^{\epsilon(V+U)(y)}
  \end{align}
  in the sense of distributions. \\
  Here $(a_{i,j})_{1\leq i,j\leq d}:=\sigma\sigma^*$ and $\sigma(x)=(\sigma_{i,j}(x))_{1\leq i,j\leq d}:\mathbb{R}^d\rightarrow\mathbb{R}^d\times\mathbb{R}^d$ is the diffusion coefficient satisfying:\\
 ($\sigma$1) There exists a positive constant $K$ such that for all $x\in\mathbb{R}^d$
 \begin{align*}
 \frac{1}{K}|\lambda|^2\leq\langle(\sigma\sigma^*)(x)\lambda,\lambda\rangle\leq K|\lambda|^2,\quad \forall \lambda\in\mathbb{R}^d,
 \end{align*}
($\sigma$2) For $1\leq i$, $j\leq d$, $\sigma_{i,j}\in\mathcal{C}_b^2(\mathbb{R}^d)$.\\
  \indent Introduce $\bar V:=V+2U$,
  $$Q:=\mathbb{R}_+\times\left(\mathbb{R}^{Md}\backslash \cup_{1\leq k<j\leq M}\left\{x=(x^{(1)},...,x^{(M)})\in\mathbb{R}^{Md}:x^{(k)}=x^{(j)}\right\}\right),$$
  $$Q^n:=(0,n)\times \left\{x=(x^{(1)},...,x^{(M)})\in\mathbb{R}^{Md}:|x|<n,x^{(k)}\neq x^{(j)}\text{ for }1\leq k<j\leq M\right\},$$
 and let the function $\phi$, $\bar\phi$, $\bar \sigma$, $\bar a$ and $b$ be defined on $Q$ by
 $$
 \phi(t,x):=\sum_{1\leq k<j\leq M}V(x^{(k)}-x^{(j)}),\quad \bar\phi(t,x):=\sum_{1\leq k<j\leq M}\bar V(x^{(k)}-x^{(j)}),
  $$
\begin{align*}\bar\sigma(t,x):=
\left[\begin{matrix}
\sigma(x^{(1)})&0&0\\
0&\sigma(x^{(2)})&0\\
\cdots&\cdots&\cdots\\
0&0&\sigma(x^{(M)})
\end{matrix}\right],\quad \bar a(t,x):=\left[\begin{matrix}
(\sigma\sigma^*)(x^{(1)})&0&0\\
0&(\sigma\sigma^*)(x^{(2)})&0\\
\cdots&\cdots&\cdots\\
0&0&(\sigma\sigma^*)(x^{(M)})
\end{matrix}\right],\end{align*}
 $$
 \quad b:=(b^{(1)},...,b^{(M)}),\quad b^{(k)}(t,x):=4C(\sigma\sigma^*)(x^{(k)})\sum_{1\leq j\neq k\leq M}(x^{(k)}-x^{(j)}),\quad k=1,\cdots,M.
$$ Observe that for arbitrary $y$, $x\in\mathbb{R}^d\backslash\left\{0\right\}$,
    $$2\sum_{i,j=1}^d(\partial_ja_{i,j}(x)\partial_iU(y)+a_{i,j}(x)\partial_j\partial_iU(y))\leq Ne^{\epsilon U(y)}$$
    for an appropriate constant $N$ which is independent of $y$, $x$. Besides $\phi$ and $\bar\phi$ are continuously differentiable on $Q$.
   If we use the notation $\partial_{r}^kf(x):=\partial_{r}^kf((x^{(1)},\cdots,x^{(M)})):=\frac{\partial f((x^{(1)},\cdots,x^{(M)})) }{\partial x_r^{(k)}}$  for $k=1,\cdots, M$ and $r=1,\cdots,d$, then for $x\in\mathbb{R}^{Md}$,
  \begin{align}
  \bar a_{i,j}(t,x)&=\sum_{k=1}^Ma_{i-(k-1)d,j-(k-1)d}(x^{(k)})I_{(k-1)d< i,j\leq kd},\label{a}\\
  \partial_{r}^k\bar a_{i,j}(t,x)&=\partial_{r}^ka_{i-(k-1)d,j-(k-1)d}(x^{(k)})I_{(k-1)d< i,j\leq kd}=\partial_{r}a_{i-(k-1)d,j-(k-1)d}(x^{(k)})I_{(k-1)d< i,j\leq kd},\label{partiala}
  \end{align}
   where $1\leq i,j\leq Md$, and
   \begin{align*}
 \partial_{r}^k\bar\phi(t,x)=\sum_{1\leq q \neq k\leq M}\partial_rV((x^{(k)}-x^{(q)})sign(q-k))sign(q-k)+4C\sum_{1\leq q\neq k\leq M}(x_r^{(k)}-x_r^{(q)}),
   \end{align*}
   furthermore,
   \begin{align*}
  \partial_{n}^m\partial_{r}^k\bar\phi(t,x)=\sum_{1\leq q\neq k \leq M}\Big(&I_{m=k}\partial_n\partial_rV((x^{(k)}-x^{(q)})sign(q-k))\\&-I_{m=q}\partial_n\partial_rV((x^{(k)}-x^{(q)})sign(q-k))\Big)+4C(I_{m=k,n=r}-I_{m\neq k,n=r}).
   \end{align*}
   Combining the above equalities with our assumptions of $V$ and $\sigma$, by algebraic calculation we get that on $Q$ there exists a large number $C_{M,d}$ depending on $Md$ and a constant $C'\in(0,\infty)$ such that
   \begin{align*}
   2D_t&\bar\phi(t,x)+\sum_{i,j=1}^{Md}\partial_{j}(\bar a_{i,j}\partial_{i}\bar\phi)(t,x)
   \\=&\sum_{i,j=1}^d\sum_{k=1}^M\Big(\partial_j^ka_{i,j}(x^{(k)})\partial_{i}^{k}\bar\phi(t,x)+a_{i,j}(x^{(k)})\partial_j^k\partial_{i}^{k}\bar\phi(t,x)\Big)
   \\=&\sum_{i,j=1}^d\sum_{k=1}^M\sum_{1\leq q\neq k\leq M}\Big(\partial_ja_{i,j}(x^{(k)})[\partial_iV((x^{(k)}-x^{(q)})sign(q-k))sign(q-k)+4C(x_i^{(k)}-x_i^{(q)})]\\
   &\quad\quad\quad\quad\quad+a_{i,j}(x^{(k)})[\partial_j\partial_iV((x^{(k)}-x^{(q)})sign(q-k))]\Big)+\sum_{i,j=1}^d\sum_{k=1}^Ma_{i,j}(x^{(k)})4CI_{i=j}
   \\ \leq &C_{M,d}\sum_{1\leq q<g\leq M}(Ce^{\epsilon(V(x^{(q)}-x^{(g)})+U(x^{(q)}-x^{(g)}))}+Ne^{\epsilon(U(x^{(q)}-x^{(g)}))})
   \leq  C'e^{\epsilon\bar \phi(t,x)}.
   \end{align*}
   The continuity of $\bar a_{i,j}(t,x)$ on $Q$ and $\partial_{j}^k\bar a_{i,j}(t,x)$ on $Q^n$ can be easily checked from \eqref{a} and \eqref{partiala} and conditions about $\sigma$. In order to reduce the lengthy algebraic computation, we only show the part for  $\bar a_{i,j}(t,x)$, similarly we can get the desired continuity for $\partial_{j}^k\bar a_{i,j}(t,x)$ on $Q^n$.  For any $(t,x)$ and $(s,y)\in Q$, by \eqref{a} we have for $1\leq i,j\leq Md,$
   \begin{align*}
   |\bar a_{i,j}(t,x)-\bar a_{i,j}(s,y)|&\leq C_{Md}\sum_{k=1}^M|a_{i-(k-1)d,j-(k-1)d}(x^{(k)})-a_{i-(k-1)d,j-(k-1)d}(y^{(k)})|I_{(k-1)d< i,j\leq kd}
   \\&\leq C_{Md}\sum_{k=1}^M|x^{(k)}-y^{(k)}|\leq C'' |x-y|.
   \end{align*}
   We can adjust constants $C''$ and $K$ such that there is still a positive constant such condition $(\sigma1)$ satisfied.\\
 \indent It follows that all conditions on $\bar\phi$ and $\bar\sigma$ in Theorem \ref{th2.3} are fulfilled and therefore by Remark \ref{remm4.1} the corresponding stochastic equation for a process $(X_t)_{t\geq0}=(X_t^{(1)},...,X_t^{(M)})_{t\geq0}$ has a unique strong solution defined for all times whenever for the initial condition $x$ we have $(0,x)\in Q$. The corresponding equation is the following system
 \begin{align*}
  X_t^{(k)}=x^{(k)}+\int_0^t\sigma(X_s^{(k)}) dW_s^{(k)}-&\int_0^t(\sigma\sigma^*)(X_s^{(k)})\partial_k\bar\phi(s,X_s)ds\\
  &+(\frac{1}{2}\sum_{j=1}^d\int_0^t\partial_{j} a_{i,j}(X_s^{(k)})ds)_{1\leq i\leq d}+\int_0^tb^{(k)}(s,X_s)ds.
 \end{align*}
 We rewrite it as following with $k=1,...,M$
 \begin{align*}
 X_t^{(k)}=x^{(k)}+&\int_0^t\sigma(X_s^{(k)})dW_s^{(k)}\\-&\int_0^t(\sigma\sigma^*)(X_s^{(k)})\sum_{j=1,j\neq k}^M \nabla V((X_s^{(k)}-X_s^{(j)})sign(j-k))sign(j-k)ds
 \\+&(\frac{1}{2}\sum_{j=1}^d\int_0^t\partial_{j} a_{i,j}(X_s^{(k)})ds)_{1\leq j\leq d},
 \end{align*}
  which has a unique strong solution defined for all times whenever $(0,(x^{(1)},...,x^{(M)}))\in Q$.

\renewcommand\thesection{A}
\section{Appendix}

\begin{lemma}\label{lemm6.1}(\cite[P. 1 Lemma 1.1.]{NI})
Let $\left\{\beta(t)\right\}_{t\in[0,T]}$ be a nonnegative measurable $(\mathcal{F}_t)_{t\geq0}-$adapted process. Assume that for all $0\leq s\leq t\leq T$,
$$E\bigg(\int_s^t\beta(r)dr\bigg|{\mathcal{F}_s}\bigg)\leq\Gamma(s,t),$$
where $\Gamma(s,t)$ is a nonrandom interval function satisfying the following conditions:\\
(i) $\Gamma(t_1,t_2)\leq\Gamma(t_3,t_4)$ if $(t_1,t_2)\subset(t_3,t_4);$\\
(ii) $\lim_{h\downarrow 0}\sup_{0\leq s<t\leq T,|t-s|\leq h} \Gamma(s,t)=\lambda$, $\lambda\geq0$.
Then for any real $\kappa<\lambda^{-1}$ $($ if $\lambda=0$, then $\lambda^{-1}=\infty)$,
$$Eexp\left\{\kappa\int_0^T\beta(r)dr\right\}\leq C=C(\kappa, \Gamma, T)<\infty.$$\\
\end{lemma}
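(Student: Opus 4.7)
My plan is to prove this via a partition-and-iterate argument (the standard Khasminskii/Novikov lemma strategy), using a power series expansion of the exponential on short time intervals.

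First I would fix $\kappa<\lambda^{-1}$ and choose $\varepsilon>0$ small enough that $\kappa(\lambda+\varepsilon)<1$. By assumption (ii) there is $h>0$ such that $\Gamma(s,t)\le\lambda+\varepsilon$ whenever $0\le s<t\le T$ with $t-s\le h$. I then partition $[0,T]$ into finitely many intervals $[a_0,a_1],\dots,[a_{N-1},a_N]$ of length $\le h$, with $N=\lceil T/h\rceil$.

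The main estimate is on a single subinterval: I claim $E\bigl(\exp\{\kappa\int_{a_{i-1}}^{a_i}\beta(r)dr\}\mid\mathcal{F}_{a_{i-1}}\bigr)\le M:=(1-\kappa(\lambda+\varepsilon))^{-1}$. To see this, expand the exponential as a power series and estimate each moment by writing
\[
\Bigl(\int_{a_{i-1}}^{a_i}\beta(r)dr\Bigr)^n = n!\int_{a_{i-1}\le t_1\le\cdots\le t_n\le a_i}\beta(t_1)\cdots\beta(t_n)\,dt_1\cdots dt_n,
\]
then peel off the innermost integral by conditioning on $\mathcal{F}_{t_{n-1}}$: the hypothesis gives $E\bigl(\int_{t_{n-1}}^{a_i}\beta(t_n)dt_n\mid\mathcal{F}_{t_{n-1}}\bigr)\le\Gamma(t_{n-1},a_i)\le\lambda+\varepsilon$ by monotonicity (i). Iterating this peeling $n$ times produces the bound $E\bigl((\int_{a_{i-1}}^{a_i}\beta)^n\mid\mathcal{F}_{a_{i-1}}\bigr)\le n!(\lambda+\varepsilon)^n$, and summing the series gives the geometric bound $M$.

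Finally I would stitch the subinterval bounds together by the tower property. Writing $\exp\{\kappa\int_0^T\beta\}=\prod_{i=1}^N\exp\{\kappa\int_{a_{i-1}}^{a_i}\beta\}$, I condition on $\mathcal{F}_{a_{N-1}}$ and apply the single-interval estimate to absorb a factor of $M$, and then repeat, yielding
\[
E\exp\Bigl\{\kappa\int_0^T\beta(r)\,dr\Bigr\}\le M^N = \bigl(1-\kappa(\lambda+\varepsilon)\bigr)^{-N},
\]
which is the claimed constant $C(\kappa,\Gamma,T)$. The only delicate step is the moment-peeling inside the conditional expectation; care must be taken because the bound $\Gamma(t_{n-1},a_i)$ is $\mathcal{F}_{t_{n-1}}$-measurable (in fact deterministic), so after taking conditional expectations inductively the factor $(\lambda+\varepsilon)^n$ pulls out cleanly. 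Aside from that, the proof is essentially a deterministic series-resummation combined with the tower property.
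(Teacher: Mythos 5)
Your argument is correct, and it is exactly the standard Khasminskii-type proof of this lemma: the paper itself offers no proof, quoting the result from Portenko \cite{NI}, and the proof there proceeds precisely as you do --- choose subintervals short enough that $\kappa\Gamma<1$ on each, expand the exponential and bound the $n$-th conditional moment by $n!(\lambda+\varepsilon)^n$ via iterated conditioning and the monotonicity of $\Gamma$, sum the geometric series, and stitch the subintervals together with the tower property to get $C=(1-\kappa(\lambda+\varepsilon))^{-N}$. The only point worth adding is the trivial case $\kappa\leq0$, where $\beta\geq0$ gives $\exp\{\kappa\int_0^T\beta(r)dr\}\leq1$ directly, so the geometric-series step is only needed for $\kappa>0$.
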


For the convenience of the reader, we include the $\mathcal{C}^\infty$-Urysohn Lemma  here.
\begin{lemma}\label{Urysohn} (\cite[8.18]{Folland} )
If $K\subset\mathbb{R}^n$ is compact and $U$ is an open set containing $K$, there exists smooth function $f$ such that $0\leq f\leq 1$, $f=1$ on $K$, and $supp(f)\subset U$.
\end{lemma}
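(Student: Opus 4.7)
The plan is to construct $f$ explicitly by mollifying the indicator function of a slightly fattened version of $K$. Since $K$ is compact and $U$ is open with $K \subset U$, the distance $\delta := \mathrm{dist}(K, U^c)$ is strictly positive (this uses compactness of $K$; if $U = \mathbb{R}^n$ we can take $\delta = \infty$, or just fix any finite value). Choose $\varepsilon \in (0, \delta/3)$ and set
\begin{equation*}
K_\varepsilon := \{x \in \mathbb{R}^n : \mathrm{dist}(x, K) \leq \varepsilon\}.
\end{equation*}
Then $K \subset K_\varepsilon$, the $\varepsilon$-neighborhood $K_{2\varepsilon}$ of $K_\varepsilon$ still satisfies $K_{2\varepsilon} \subset U$, and $K_\varepsilon$ is compact.

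Next I would take a standard smooth mollifier: fix $\rho \in \mathcal{C}_c^\infty(\mathbb{R}^n)$ with $\rho \geq 0$, $\mathrm{supp}(\rho) \subset B_1(0)$, and $\int_{\mathbb{R}^n} \rho = 1$, and set $\rho_\varepsilon(x) := \varepsilon^{-n} \rho(x/\varepsilon)$, so that $\mathrm{supp}(\rho_\varepsilon) \subset B_\varepsilon(0)$ and $\int \rho_\varepsilon = 1$. Define
\begin{equation*}
f(x) := (\mathbf{1}_{K_\varepsilon} * \rho_\varepsilon)(x) = \int_{\mathbb{R}^n} \mathbf{1}_{K_\varepsilon}(y)\, \rho_\varepsilon(x-y)\, dy.
\end{equation*}
Standard properties of convolution with a smooth compactly supported function give $f \in \mathcal{C}^\infty(\mathbb{R}^n)$, and since $\rho_\varepsilon \geq 0$ with unit integral and $0 \leq \mathbf{1}_{K_\varepsilon} \leq 1$, we get $0 \leq f \leq 1$.

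The two remaining claims are routine pointwise checks. For $x \in K$, any $y$ with $|x - y| < \varepsilon$ satisfies $\mathrm{dist}(y, K) < \varepsilon$, hence $y \in K_\varepsilon$; consequently $f(x) = \int_{B_\varepsilon(x)} \rho_\varepsilon(x - y)\, dy = 1$. For $x \notin K_{2\varepsilon}$, any $y$ with $|x - y| \leq \varepsilon$ has $\mathrm{dist}(y, K) > \varepsilon$, so $\mathbf{1}_{K_\varepsilon}(y) = 0$ and $f(x) = 0$; this shows $\mathrm{supp}(f) \subset \overline{K_{2\varepsilon}} \subset U$. There is no real obstacle: the only thing to double-check is the quantitative choice of $\varepsilon$ relative to $\delta$, which guarantees both that $K_{2\varepsilon}$ remains inside $U$ and that the $\varepsilon$-mollification of $\mathbf{1}_{K_\varepsilon}$ still equals $1$ on $K$. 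The compactness of $K$ is used only to ensure $\delta > 0$ and to make $K_\varepsilon$ (hence $\mathrm{supp}(f)$) compact, so that $f \in \mathcal{C}_c^\infty(\mathbb{R}^n)$.
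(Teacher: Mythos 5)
Your construction is correct: since the paper states this lemma only by citation to \cite[8.18]{Folland} and gives no proof, there is nothing internal to compare against, and your mollification argument (convolving $\mathbf{1}_{K_\varepsilon}$ with a standard mollifier $\rho_\varepsilon$, with $\varepsilon$ chosen small relative to $\mathrm{dist}(K,U^c)$) is essentially the standard proof found in the cited reference. All steps check out, including the quantitative choice of $\varepsilon$ ensuring $f\equiv 1$ on $K$ and $\mathrm{supp}(f)\subset K_{2\varepsilon}\subset U$ with $K_{2\varepsilon}$ compact.
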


The following lemma is based on a consequence of 7.6.4 in \cite{Robert}. We use this result a couple of times and hence for the sake of completeness we state it here precisely.
\begin{lemma}\label{Girsanov}
Let $\sigma$ and $b^{(i)}$, $i=1,2$ satisfy the conditions in Lemma \ref{lemm3.1}.  Let $(X_t^{(i)}, W_t^{(i)})_{t\geq 0}$ satisfy:
 $$X_t^{(i)}=x+\int_0^tb^{(i)}(s,X_s^{(i)})ds+\int_0^t\sigma(s,X_s^{(i)})dW_s^{(i)}.$$
 Then for any bounded Borel functions $f$ given on $\mathcal{C}=:\mathcal{C}([0,\infty),\mathbb{R}^d)$ we have
  $$Ef(X_\cdot^{(2)})=Ef(X_\cdot^{(1)})\overline{\rho}_\infty$$
  if
  \begin{align}\label{Novikov}
  E\exp\Big(\frac{1}{2}\int_0^\infty(\Delta b^*(s,X_s^{(1)})(\sigma\sigma^*)^{-1}(s,X_s^{(1)})\Delta b(s,Xs^{(1)}))ds\Big)<\infty,
  \end{align}
 where $\Delta b(t,X_t^{(1)}):=b^{(2)}(t,X_t^{(1)})-b^{(1)}(t,X_t^{(1)})$ and
 \begin{align*}\overline{\rho}_t:=\exp(&\int_0^t\Delta b^*(s,X_s^{(1)})(\sigma^*)^{-1}(s,X_s^{(1)})dW_s^{(1)}\\&-\frac{1}{2}\int_0^t(\Delta b^*(s,X_s^{(1)})(\sigma\sigma^*)^{-1}(s,X_s^{(1)})\Delta b)(s,X_s^{(1)})ds),\quad t\geq 0.
 \end{align*}
\end{lemma}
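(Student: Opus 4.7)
The plan is to deduce this as a direct application of Girsanov's theorem, combined with the uniqueness-in-law consequence of Lemma \ref{lemm3.1}.

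First, I would use the integrability hypothesis \eqref{Novikov} as a full-line Novikov criterion to conclude that the exponential local martingale $(\overline\rho_t)_{t\geq0}$ is in fact a uniformly integrable $P$-martingale; hence $\overline\rho_\infty=\lim_{t\to\infty}\overline\rho_t$ exists in $L^1(P)$ with $E\overline\rho_\infty=1$. This allows me to define a probability measure $Q$ on $(\Omega,\mathcal{F})$ via $dQ/dP=\overline\rho_\infty$, and the Radon--Nikodym densities $\overline\rho_t=E[\overline\rho_\infty\,|\,\mathcal{F}_t]$ give $dQ|_{\mathcal{F}_t}/dP|_{\mathcal{F}_t}=\overline\rho_t$.

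Second, by Girsanov's theorem the process
$$\tilde W_t:=W_t^{(1)}-\int_0^t(\sigma^*)^{-1}(s,X_s^{(1)})\,\Delta b(s,X_s^{(1)})\,ds,\qquad t\geq 0,$$
is a $d$-dimensional $(\mathcal{F}_t)$-Brownian motion under $Q$ (here I use Assumption \ref{ass3.2}(i)/Lemma \ref{lemm3.1}, which guarantees $(\sigma\sigma^*)^{-1}$ is bounded so the Girsanov shift is well-defined). Substituting $dW^{(1)}_s=d\tilde W_s+(\sigma^*)^{-1}\Delta b(s,X_s^{(1)})\,ds$ into the SDE for $X^{(1)}$ gives, $Q$-a.s.,
$$X_t^{(1)}=x+\int_0^t b^{(2)}(s,X_s^{(1)})\,ds+\int_0^t\sigma(s,X_s^{(1)})\,d\tilde W_s,$$
so that under $Q$ the pair $(X^{(1)},\tilde W)$ is a weak solution of the SDE driven by $(b^{(2)},\sigma)$ starting from $x$.

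Third, Lemma \ref{lemm3.1} applied to the coefficients $(b^{(2)},\sigma)$ yields pathwise uniqueness for this SDE, which by the Yamada--Watanabe principle gives uniqueness in law. Since $(X^{(2)},W^{(2)})$ under $P$ is another weak solution of the same SDE from $x$, the $Q$-law of $X^{(1)}_\cdot$ on $\mathcal{C}$ coincides with the $P$-law of $X^{(2)}_\cdot$. Applying this equality of laws to the bounded Borel function $f$ gives
$$Ef(X_\cdot^{(2)})=E_Q f(X_\cdot^{(1)})=E[f(X_\cdot^{(1)})\overline\rho_\infty],$$
which is the claimed identity. The one genuinely delicate point is the upgrade from local to true martingale on the whole half-line $[0,\infty)$: Novikov's criterion guarantees the martingale property on each finite interval, and it is the global integrability assumption \eqref{Novikov} that promotes $(\overline\rho_t)$ to a uniformly integrable martingale so that $\overline\rho_\infty$ is meaningful and has expectation one; everything else is a standard Girsanov plus weak uniqueness argument.
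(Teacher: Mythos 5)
Your proposal is correct and follows essentially the same route as the paper: Novikov's condition makes $(\overline\rho_t)_{t\geq0}$ a (uniformly integrable) martingale with $E\overline\rho_\infty=1$, one sets $d\hat P=\overline\rho_\infty\,dP$, Girsanov turns $X^{(1)}$ into a weak solution of the SDE with drift $b^{(2)}$ under $\hat P$, and the uniqueness from Lemma \ref{lemm3.1} (pathwise uniqueness, hence uniqueness in law by Yamada--Watanabe) identifies the $\hat P$-law of $X^{(1)}_\cdot$ with the $P$-law of $X^{(2)}_\cdot$. Only a cosmetic transpose slip: since the exponent of $\overline\rho$ contains $\int\Delta b^*(\sigma^*)^{-1}\,dW^{(1)}$, the Girsanov shift is $\int_0^t\sigma^{-1}(s,X^{(1)}_s)\Delta b(s,X^{(1)}_s)\,ds$ rather than $\int_0^t(\sigma^*)^{-1}\Delta b\,ds$ (the two agree only for symmetric $\sigma$), and with this correction $\sigma\,\sigma^{-1}\Delta b=\Delta b$ as needed in the substitution step.
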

 \begin{proof}
  Theorem 6.1 in \cite{Robert} says if \eqref{Novikov} (Novikov condition) holds, then $(\overline{\rho}_t)_{t\geq0}$ is an $(\mathcal{F}_t)-$ martingale. Let $\hat P=\overline{\rho}_\infty P$, then $\hat P$ is also a probability on $(\Omega,\mathcal{F})$. By Theorem 4.1 in \cite{Ikeda},
  \begin{align*}
  \hat W_t=W^{(1)}_t-\int_0^t\sigma^{-1}(s,X_s^{(1)})\Delta b(s,X_s^{(1)})ds,\quad t\geq0
  \end{align*}
  is a $(\mathcal{F}_t)$-Brownian motion on the probability space $(\Omega,\mathcal{F},\hat P)$. So we can wirte
  \begin{align*}
  X_t^{(1)}&=x+\int_0^tb^{(1)}(s,X_s^{(1)})ds+\int_0^t\sigma(s,X_s^{(1)})d\hat W_s+\int_0^t\sigma(s,X_s^{(1)})\sigma^{-1}(s,X_s^{(1)})\Delta b(s,X_s^{(1)})ds
  \\&=x+\int_0^tb^{(1)}(s,X_s^{(1)})ds+\int_0^t\sigma(s,X_s^{(1)})d\hat W_s+\int_0^t\Delta b(s,X_s^{(1)})ds
  \\&=x+\int_0^tb^{(2)}(s,X_s^{(1)})ds+\int_0^t\sigma(s,X_s^{(1)})d\hat W_s,\quad t\geq 0.
  \end{align*}
  This implies that $(X^{(1)}_t,\hat W_t)_{t\geq0}$ is a  solution to the SDE
  \begin{align}\label{sde2}
  X_t^{(2)}=x+\int_0^tb^{(2)}(s,X_s^{(2)})ds+\int_0^t\sigma(s,X_s^{(2)})dW_s^{(2)},\quad t\geq 0,
  \end{align} on the probability space $(\Omega,\mathcal{F}, (\mathcal{F}_t)_{t\geq 0}, \hat P)$.
  From Lemma \ref{lemm3.1} we know that the solution to SDE \eqref{sde2} is unique, hence for any bounded Borel functions $f(x)$, given on $\mathcal{C}=:\mathcal{C}([0,\infty),\mathbb{R}^d)$ we have
  \begin{align*}
  Ef(X^{(2)}_\cdot)=\hat Ef(X^{(1)}_\cdot)=E\overline{\rho}_\infty f(X^{(1)}_\cdot).
  \end{align*}

 \end{proof}

\end{document}